\newcommand\tabcaption{\def\@captype{table}\caption}
\date{}\makeatother
\newtheorem{theorem}{Theorem}[section]
\newtheorem{pro}{Proposition}[section]
\newtheorem{defi}{Definition}[section]
\newtheorem{lemma}{Lemma}[section]
\newtheorem{coro}{Corollary}[section]
\newtheorem{conjecture}{Conjecture}
\newtheorem{claim}{Claim}
\newtheorem{fact}{Fact}
\title{Proper vertex-pancyclicity of edge-colored complete graphs without joint monochromatic triangles\thanks{Supported by NSFC No.11871034 and 11531011.}}
\author{Xiaozheng Chen, Xueliang Li\\
{\small Center for Combinatorics and LPMC}\\
{\small Nankai University, Tianjin 300071, China}\\
{\small Email: chen\_xiaozheng@163.com, lxl@nankai.edu.cn}
}
\begin{document}

\maketitle

\begin{abstract}
In an edge-colored graph $(G,c)$, let $d^c(v)$ denote the number of colors on the edges incident
with a vertex $v$ of $G$ and $\delta^c(G)$ denote the minimum value of $d^c(v)$ over all vertices $v\in V(G)$.
A cycle of $(G,c)$ is called proper if any two adjacent edges of the cycle have distinct colors.
An edge-colored graph $(G,c)$ on $n\geq 3$ vertices is called properly vertex-pancyclic
if each vertex of $(G,c)$ is contained in a proper cycle of length $\ell$ for every $\ell$ with $3 \le \ell \le n$.
Fujita and Magnant conjectured that
every edge-colored complete graph on $n\geq 3$ vertices with $\delta^c(G)\geq \frac{n+1}{2}$
is properly vertex-pancyclic.
Chen, Huang and Yuan partially solve this conjecture by adding an extra condition that
$(G,c)$ does not contain any monochromatic triangle.
In this paper, we show that this conjecture is true if
the edge-colored complete graph contain no joint monochromatic triangles. \\

\noindent{\bf Keywords:} edge-colored graph, proper cycle, color degree, properly vertex-pancyclicity
\noindent{\bf AMS subject classification 2010:} 05C38, 05C15.
\end{abstract}

\section{Introduction}

Let $G$ be a simple graph and let $c$ be an edge-coloring of $G$. We call $(G,c)$ an \emph{edge-colored graph}.
A subgraph in an edge-colored graph is called \emph{proper} if any two adjacent edges in the subgraph are colored by distinct colors.
Rainbow subgraphs and monochromatic subgraphs are two popular concepts related to proper subgraphs. A subgraph in an edge-colored graph
is called \emph{rainbow} if all the edges in the subgraph are colored by distinct colors, and \emph{monochromatic} if all the edges
in the subgraph are colored by the same color.

In this paper, we mainly consider proper cycles in an edge-colored graph. Once a cycle is found and denoted by the cyclic arrangement
of its vertices such that two vertices are adjacent if they are consecutive in the sequence and nonadjacent otherwise in an edge-colored graph,
one can easily checked whether it is proper or not. So, we often omit the checking process in the following.

In an edge-colored graph $(G,c)$, let $d^c_G(v)$ denote the number of colors on the edges incident with a
vertex $v$ of $G$ and let $\delta^c(G)$ denote the minimum value of $d^c_G(v)$ over all vertices $v\in V(G)$.
When no confusion occurs, we use $d^c(v)$ instead of $d^c_G(v)$. The \emph{length} of a path or a cycle is the number of its edges.
 Let $\Delta^{mon}(K^c_n)$
denote the maximum number of edges of the same color incident with a
vertex of $K^c_n$.

An edge-colored graph $(G,c)$ is called {\it properly Hamiltonian} if it contains a properly colored Hamiltonian cycle.
An edge-colored graph $(G,c)$ is  called {\it properly vertex-pancyclic} if every vertex of the graph is contained in
a proper cycle of each length $\ell$ for every $\ell$ with $3 \le \ell \le n$.

In 1952, Dirac \cite{Dirac} obtained a classical theorem that if $\delta(G)\geq \frac{n}{2}$, then $G$ is Hamiltonian. Inspired by this work, there have appeared lots of results and problems on the existence of proper cycles in different types of edge-colored graphs.
In 1976, Bollobas and
Erdos \cite{BE} conjectured that every $K^c_n$ with $\Delta^{mon}(K^c_n)<\lfloor\frac{n}{2}\rfloor$ contains a properly colored
Hamiltonian cycle.
The author in \cite{A} showed that for any $\varepsilon > 0$, there exists an integer $n_ 0$
such that every $K^ c_n$ with $\Delta^{mon}(K^c_n)<(\frac{n}{2} - \varepsilon )n$ and $n\geq n_ 0$ contains a properly colored
Hamiltonian cycle, which implies a result obtained by Alon and Gutin \cite{AG}
that for every $\varepsilon > 0$ and $n > n_0(\varepsilon)$,
any complete graph $K_n$ on $n$ vertices whose edges are colored so that no vertex is incident with more than $(1 -1/\sqrt{2}- \varepsilon)n$ edges of the same color, contains a Hamiltonian cycle in which adjacent edges have distinct colors. Moreover, for every $k$ between 3 and $n$, any such $K_n$ contains a cycle of length $k$ in which adjacent edges have distinct colors.

\section{Preliminaries}

Fujita and Magnant in \cite{FM} posed the following conjecture.

\begin{conjecture} [\cite{FM}] \label{conj1}
Let $(G, c)$ be an edge-colored graph on $n\geq 3$ vertices. If $\delta^c(G)\geq \frac{n+1}{2}$, then $G$ is properly Hamiltonian.
\end{conjecture}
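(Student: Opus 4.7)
The plan is to mimic the classical Dirac--P\'osa rotation-extension argument for Hamiltonicity, adapting every step to respect the proper-coloring constraint. Note that $\delta^c(G)\ge (n+1)/2$ forces $\delta(G)\ge (n+1)/2$, so Dirac's theorem already supplies an uncolored Hamiltonian cycle; the real task is to produce one whose consecutive edges carry different colors.

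First I would fix a longest proper path $P=v_1v_2\cdots v_k$ and aim to show $k=n$. If some $w\notin V(P)$ were joined to $v_1$ by an edge with $c(v_1w)\neq c(v_1v_2)$, then $wv_1v_2\cdots v_k$ would be a longer proper path, contradicting maximality. Hence every edge from $v_1$ to $V(G)\setminus V(P)$ has color $c(v_1v_2)$, and therefore at least $\delta^c(v_1)-1\ge (n-1)/2$ distinct colors at $v_1$ are realized only on edges into $V(P)\setminus\{v_2\}$; symmetrically at $v_k$. I would then run P\'osa-type rotations: any edge $v_1v_i$ with $c(v_1v_i)\notin\{c(v_1v_2),\,c(v_{i-1}v_i)\}$ yields the proper path $v_{i-1}v_{i-2}\cdots v_1v_iv_{i+1}\cdots v_k$ of the same length but with new endpoint $v_{i-1}$. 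Iterating rotations from both endpoints produces a large reachable endpoint set, and the color-degree bound is used to show that, unless $k=n$, some pair of these endpoints admits an edge of suitable color to a missing vertex, contradicting the maximality of $|P|$.

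Assuming $k=n$, I would close $P$ into a proper Hamiltonian cycle by a pigeonhole argument on the index sets
\[
A=\{i:c(v_1v_{i+1})\neq c(v_1v_2)\text{ and }c(v_1v_{i+1})\neq c(v_iv_{i+1})\},
\]
\[
B=\{i:c(v_iv_n)\neq c(v_{n-1}v_n)\text{ and }c(v_iv_n)\neq c(v_iv_{i+1})\},
\]
since any $i\in A\cap B$ produces the proper Hamiltonian cycle $v_1v_2\cdots v_iv_nv_{n-1}\cdots v_{i+1}v_1$. The color-degree condition contributes about $(n-1)/2$ admissible colors at each of $v_1$ and $v_n$, and since $A,B\subseteq\{2,\dots,n-1\}$ a set of size $n-2$, the inclusion-exclusion bound $|A\cap B|\ge |A|+|B|-(n-2)$ already forces $A\cap B\neq\emptyset$ provided no further losses occur.

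The main obstacle is exactly the doubling of forbidden colors. Unlike in Dirac's proof, every candidate edge used for a rotation or for closing must simultaneously avoid two different colors, one at each endpoint, so any color $\beta$ at $v_1$ for which every $\beta$-colored edge $v_1v_j$ also satisfies $c(v_{j-1}v_j)=\beta$ is lost from $A$, and a single such loss at each endpoint is enough to annihilate the slack at the borderline $\delta^c=(n+1)/2$. Overcoming this gap appears to require either a refined double-counting that exploits multiple edges of the same color incident to a vertex, or a structural lemma ruling out the small monochromatic obstructions that concentrate these conflicts. The need for such extra structure is precisely why the existing partial results, including the present paper, must introduce an additional hypothesis forbidding certain monochromatic triangles.
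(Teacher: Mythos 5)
You are attempting to prove Conjecture \ref{conj1}, which the paper states as an \emph{open conjecture} of Fujita and Magnant and does not prove. The paper's actual contribution, Theorem \ref{main}, is a partial result toward the stronger vertex-pancyclicity conjecture under the added hypothesis that $(G,c)$ contains no joint monochromatic triangles; even the earlier special case in \cite{CHY} needed to forbid monochromatic triangles outright. So there is no proof in the paper to compare against, and a complete argument along your lines would be a new result rather than a reconstruction.

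As you yourself concede in your final paragraph, the sketch does not close, and the gap you name is the decisive one: at the threshold $\delta^c=\frac{n+1}{2}$ the bound $|A\cap B|\ge |A|+|B|-(n-2)$ has no slack once each candidate edge must avoid \emph{two} colors, and a single color at $v_1$ all of whose edges land badly already destroys the count; nothing in the proposal excludes this. There are also concrete technical slips. In the rotated path $v_{i-1}\cdots v_1v_iv_{i+1}\cdots v_k$ the edge adjacent to $v_1v_i$ at $v_i$ is $v_iv_{i+1}$, not the deleted edge $v_{i-1}v_i$, so the admissibility condition should be $c(v_1v_i)\notin\{c(v_1v_2),c(v_iv_{i+1})\}$. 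Likewise, in the closing cycle $v_1\cdots v_iv_nv_{n-1}\cdots v_{i+1}v_1$ the edge $v_iv_{i+1}$ does not appear, and the properness checks at $v_{i+1}$ and at $v_i$ involve $c(v_{i+1}v_{i+2})$ and $c(v_{i-1}v_i)$ respectively, so your sets $A$ and $B$ are not the correct index sets. Finally, Conjecture \ref{conj1} is stated for a general edge-colored graph, not a complete one, so the chords $v_1v_i$ and $v_iv_n$ your rotations and closure rely on need not exist, which a P\'osa-type endpoint-set argument must handle explicitly. None of this invalidates the strategy as a research direction, but the proposal is a plan with an acknowledged unfilled gap, not a proof.
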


They showed there that the condition $\delta^c(G)\geq \frac{n+1}{2}$ in Conjecture \ref{conj1} is sharp by constructing an example in \cite{FM}
Then, they further posed the following conjecture.
\begin{conjecture}[\cite{FM}] \label{conj2}
Let $(G,c)$ be an edge-colored complete graph on $n\geq 3$ vertices.
If $\delta^c(G)\geq \frac{n+1}{2}$, then $G$ is properly vertex-pancyclic.
\end{conjecture}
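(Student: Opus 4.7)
The plan is to fix an arbitrary vertex $v$ of $(G,c)$ and prove by induction on $\ell$ that $v$ lies on a proper cycle of each length $\ell$ with $3\leq \ell\leq n$. The base case $\ell=3$ is a direct counting argument using $d^c(v)\geq (n+1)/2$: among the $\binom{n-1}{2}$ pairs $(u,w)$ of neighbors of $v$, I want one with $c(vu)$, $c(vw)$, $c(uw)$ pairwise distinct. I would bound the number of ordered ``bad'' pairs $(u,w)$, those with $c(vu)\neq c(vw)$ but $c(uw)\in\{c(vu),c(vw)\}$, by grouping according to the color classes at $v$ and then using $\delta^c(u)\geq (n+1)/2$ to bound the number of monochromatic completions; the hypothesis should leave enough room to produce a proper triangle through $v$.

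For the inductive step, assume a proper cycle $C=v_1v_2\cdots v_\ell v_1$ is already built with $v_1=v$ and $\ell<n$. I would pick an outside vertex $u\in V(G)\setminus V(C)$ and try to \emph{insert} $u$ between two consecutive vertices $v_i,v_{i+1}$; the inserted cycle $v_1\cdots v_iuv_{i+1}\cdots v_\ell v_1$ is proper precisely when $c(v_{i-1}v_i)\neq c(v_iu)$, $c(v_iu)\neq c(uv_{i+1})$, and $c(uv_{i+1})\neq c(v_{i+1}v_{i+2})$. Summing over all outside $u$ and all $\ell$ insertion positions, the color-degree hypothesis $\delta^c\geq (n+1)/2$ at $u$ and at each $v_i$ should provide enough freedom to force some good insertion position. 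The boundary case $\ell=n-1$ has a unique outside vertex $u$ and is essentially Conjecture \ref{conj1} restricted to passing through $v$; there one must replace the flexibility of choosing $u$ by a rotation-and-exchange argument along $C$ using chords of compatible colors.

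The main obstacle is the insertion step in its worst case, where the adversary arranges many monochromatic triangles and monochromatic stars on $V(C)\cup\{u\}$ to block every insertion position simultaneously. This is exactly the configuration that the paper sidesteps by forbidding joint monochromatic triangles, since two monochromatic triangles sharing an edge or a vertex can obstruct several consecutive insertions at once. For the full conjecture I would argue in two stages: first show that if no insertion succeeds for any choice of outside $u$, then the coloring on $V(C)\cup\{u\}$ is rigidly constrained (many monochromatic copies of small subgraphs pinned to specific edges of $C$); then exploit those rigidity constraints together with $\delta^c\geq (n+1)/2$ to either rotate $C$ to a different proper cycle on the same vertex set whose insertions do succeed, or to exhibit a proper $(\ell+1)$-cycle through $v$ directly using vertices in the abundant monochromatic classes. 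Controlling the rotations so that $v$ is never exchanged off the cycle, and ruling out the extremal joint-triangle configurations without simply assuming them away, is where I expect the genuine difficulty to lie.
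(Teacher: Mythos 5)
The statement you are addressing is Conjecture~\ref{conj2}, which remains open: the paper does not prove it, but only the special case Theorem~\ref{main}, under the extra hypothesis that $(G,c)$ contains no joint monochromatic triangles. So there is no proof in the paper to match yours against, and more importantly, what you have written is not a proof either — it is a programme. Your base case ($\ell=3$) and the length-$4$ case are fine in substance (they are Theorems~\ref{C3} and~\ref{C4}, already established by Fujita and Magnant), but the inductive step is left entirely unresolved. The assertion that ``the color-degree hypothesis $\delta^c\geq (n+1)/2$ at $u$ and at each $v_i$ should provide enough freedom to force some good insertion position'' is exactly the point at issue, and you do not substantiate it; you then explicitly concede that handling the blocked-insertion configurations ``is where I expect the genuine difficulty to lie.'' Identifying the obstacle correctly is not the same as overcoming it.

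To see the size of the gap concretely: the paper's entire technical apparatus — the partition of $V(G)\setminus V(C)$ into $W_1(C)$, $W_2(C)$, $W_3(C)$, the follower-vertex analysis, the decomposition of $V(C)$ into $P_C^1(w)$, $P_C^2(w)$, $P_C^3(w)$ in Lemma~\ref{lemma1}, the dependence-property and coloring-orientation counting, and Lemmas~\ref{lemma2} through~\ref{lemma13} — is devoted precisely to the case where no single insertion of an outside vertex succeeds, and even with all of that it only closes the argument when joint monochromatic triangles are forbidden (this hypothesis is used repeatedly, e.g.\ to bound $|W_3(C)|\leq \ell/2$ after Lemma~\ref{lemma1} and to get $|W_3(C)|\leq 3$ in Corollary~\ref{coro1}). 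Your proposed two-stage rigidity-then-rotation argument would have to reproduce and then strictly extend this analysis to configurations with joint monochromatic triangles, and you give no indication of how the rotation step preserves $v_1$ on the cycle or how the extremal configurations are ruled out rather than assumed away. As it stands, the proposal does not establish the conjecture.
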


Chen, Huang and Yuan partially solved the conjecture
by adding a condition that $(G,c)$ does not contain any monochromatic triangle.
\begin{theorem}\label{th1} \cite{CHY}
Let $(G,c)$ be an edge-colored complete graph on $n\geq 3$ vertices such that $\delta^c(G)\geq \frac{n+1}{2}$.
If $(G, c)$ contains no monochromatic triangles, then $(G,c)$ is properly vertex-pancyclic.
\end{theorem}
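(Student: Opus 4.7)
The plan is to proceed by strong induction on the cycle length $\ell$, establishing that a given vertex $v \in V(G)$ lies on a proper cycle of length $\ell$ for each $3 \le \ell \le n$. Throughout, we use the two hypotheses in tandem: the color-degree bound $\delta^c(G) \ge (n+1)/2$ and the absence of monochromatic triangles. The inductive scheme will start from a proper triangle through $v$ and then enlarge the cycle by one vertex at each step.

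For the base case $\ell = 3$ we build a proper (hence rainbow) triangle through $v$. Since $d^c(v) \ge (n+1)/2$ while $v$ has only $n-1$ neighbors, pigeonhole forces some color $\alpha$ to appear exactly once at $v$, on an edge $vu$. For every other vertex $w$ we automatically have $c(vw) \ne \alpha$, so we only need $c(uw) \notin \{\alpha,\, c(vw)\}$. The number of $w$ with $c(uw) = \alpha$ is bounded by the size of the $\alpha$-colored star at $u$; the no-monochromatic-triangle hypothesis further excludes the degenerate case $c(vu) = c(uw) = c(vw)$. A direct count using $d^c(u) \ge (n+1)/2$ will leave at least one admissible $w$.

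For the inductive step assume $v$ lies on a proper cycle $C = v_1 v_2 \cdots v_\ell v_1$ with $v_1 = v$ and $\ell < n$. The primary device is a single-vertex insertion: seek $u \notin V(C)$ and an index $i$ so that replacing the edge $v_i v_{i+1}$ by the path $v_i u v_{i+1}$ preserves the proper property, i.e.\ $c(v_{i-1} v_i) \ne c(v_i u)$, $c(v_i u) \ne c(u v_{i+1})$, and $c(u v_{i+1}) \ne c(v_{i+1} v_{i+2})$. I would count the pairs $(u,i)$ violating each individual constraint separately; the color-degree hypothesis at $v_i$ and $v_{i+1}$ should yield a surplus of admissible pairs except in extremal configurations.

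The principal obstacle will be the tight case where no single-vertex insertion succeeds. Here one must exploit the no-monochromatic-triangle assumption to perform a two-step rerouting: for instance, exchange some $v_j \in V(C) \setminus \{v\}$ for two external vertices $u, u'$ so the net length gain is $+1$, while preserving properness on all four affected edges simultaneously. When $\ell$ is close to $n$ the external pool shrinks, so the endgame must split into a small-$(n-\ell)$ regime, treated by direct analysis of the few vertices outside $C$, and a large-$(n-\ell)$ regime, handled by the counting bound above. Controlling the interaction between the inserted vertices and the two boundary edges $v_{i-1}v_i$, $v_{i+1}v_{i+2}$ of $C$ is where the no-monochromatic-triangle hypothesis does the heaviest lifting.
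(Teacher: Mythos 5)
Your outline matches the coarse strategy of the cited proof in \cite{CHY} (and of the present paper): establish short proper cycles through $v$, then show that a proper $\ell$-cycle through $v$ with no proper $(\ell+1)$-cycle through $v$ contradicts $\delta^c(G)\ge\frac{n+1}{2}$. But the proposal has a genuine gap exactly where the real work sits. Your inductive step rests on the claim that counting the pairs $(u,i)$ violating the three insertion constraints ``should yield a surplus of admissible pairs except in extremal configurations.'' No such union bound closes: the hypothesis bounds the number of \emph{colors} at each vertex, not the number of edges of a given color, so a single external vertex $u$ can be inadmissible at every position $i$ (e.g.\ if all edges from $u$ to $C$ share one color, or if $u$ ``follows'' the colors of $C$), and these are not rare degeneracies but the generic situation one must confront. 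The actual proof does not count insertions at all; it partitions $V(G)\setminus V(C)$ into single-colored vertices, follower vertices, and the rest (the sets $W_1,W_2,W_3$ of this paper), proves rigid structural claims about each class (Claims~\ref{claim1} and~\ref{claim2}, the dependence property $DP_v$), and then invokes a counting statement of the type of Fact~\ref{fact1} to exhibit a vertex whose color degree falls below $\frac{n+1}{2}$. Your proposal contains no mechanism for producing that final contradiction: ``exchange some $v_j$ for two external vertices $u,u'$'' is indeed one of the moves used (cf.\ the cycles $v_\ell w_1v_1v_3w_2v_4\overrightarrow{C}v_\ell$ in the proof of Lemma~\ref{lemma2}), but identifying which $u,u'$ and $v_j$ work requires the follower/dependence structure you never develop, and the no-monochromatic-triangle hypothesis enters precisely through those structural claims, not through the boundary-edge bookkeeping you describe.

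The base case is also not as routine as sketched. With $\alpha=c(vu)$ the unique color at $v$ on $vu$, the condition $c(vw)\ne\alpha$ is automatic, but the set of $w$ with $c(uw)=c(vw)$ is not controlled by any color-degree count (one can have $c(uw)=c(vw)$ for every $w$ without creating a monochromatic triangle, since $c(uv)=\alpha$ differs from both), so ``a direct count\ldots will leave at least one admissible $w$'' does not follow for this particular edge. The existence of a rainbow triangle, and of a proper $4$-cycle, through every vertex are themselves nontrivial theorems of Fujita and Magnant (Theorems~\ref{C3} and~\ref{C4} here), and the induction in \cite{CHY} deliberately starts at $\ell=4$ for this reason. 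To make your plan into a proof you would need to either cite those results or reproduce their arguments, and then replace the insertion-counting heuristic with the structural analysis of the non-insertable configuration.
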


They employed a term named as ``follower vertex"; see the following definition.
\begin{defi} Let $C=v_1v_2\ldots v_\ell v_1$ be a cycle in an edge-colored graph $(G,c)$
and let $v_{\ell+1}=v_1$ and $v_0=v_\ell$.
We say that a vertex $x\in V(G)\setminus V(C)$ follows the colors of $C$ increasingly
if $c(xv_i)=c(v_iv_{i+1})$ for all $i=1,2,\ldots, \ell$,
and a vertex $x\in V(G)\setminus V(C)$ follows the colors of $C$ decreasingly
if $c(xv_i)=c(v_iv_{i-1})$ for all $i=1,2,\ldots, \ell$.
In either of these cases, we say that the vertex $x\in V(G)\setminus V(C)$ ``follows" the colors of $C$
and it is also called a {\it follower vertex}.
\end{defi}

In the proof of Theorem \ref{th1}, they showed two claims which is stated as follows since we will use them
later in our proof of Lemma \ref{lemma2}.
\begin{claim}\label{claim1}
Suppose there is a cycle $C$ of length $\ell$ containing $v_1$, but no proper cycle of length $\ell+1$
containing $v_1$ in $(G,c)$, and suppose there is no monochromatic triangle containing two vertices in $V(C)$
and a vertex in $V(G)\setminus V(C)$.
If there are two vertices which follow the colors of $C$ in different directions,
then $c(v_{i}v_{i+1})= c(v_{i+2}v_{i+3})$ for all indices $i$ with $1\leq i\leq \ell-1$,
which implies that $C$ is an even cycle with two colors appearing alternatively on $C$.
\end{claim}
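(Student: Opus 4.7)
The plan is to argue by contradiction: I will suppose that $c(v_iv_{i+1})\ne c(v_{i+2}v_{i+3})$ for some index $i\in\{1,\dots,\ell-1\}$ and exhibit a properly coloured cycle of length $\ell+1$ through $v_1$, contradicting the hypothesis. Throughout I abbreviate $c_j:=c(v_jv_{j+1})$ (indices modulo $\ell$), and I denote by $x$ the follower in the increasing direction and by $y$ the follower in the decreasing direction, so that $c(xv_j)=c_j$ and $c(yv_j)=c_{j-1}$ for every $j$. Set $\rho:=c(xy)$.

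First I will analyse the ``double insertion'' cycle
\[
D_k\;:=\;v_1v_2\cdots v_{k-1}\,x\,y\,v_{k+1}\cdots v_\ell v_1\qquad(k\in\{2,\dots,\ell\}),
\]
which has length $\ell+1$ and passes through $v_1$. The follower identities reduce all new adjacent-colour checks to $c_{k-1}$ versus $\rho$ and $\rho$ versus $c_k$, so $D_k$ is proper if and only if $\rho\notin\{c_{k-1},c_k\}$. By hypothesis $D_k$ cannot be proper, so $\rho\in\{c_{k-1},c_k\}$ for every $k\in\{2,\dots,\ell\}$, and since $c_{k-1}\ne c_k$ the colour $\rho$ must appear at every second edge of $C$. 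Assuming (by symmetry) $\rho=c_1$, this forces $c_1=c_3=c_5=\cdots=\rho$; if $\ell$ were odd, $c_\ell=\rho=c_1$ would contradict $c_\ell\ne c_1$, so $\ell$ is even.

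Next I will analyse the two ``chord swap'' cycles of length $\ell+1$,
\[
E_{i,j}\;:=\;v_1\cdots v_i\,x\,v_jv_{j-1}\cdots v_{i+1}\,v_{j+1}\cdots v_\ell v_1
\]
and its mirror with $y$ in the symmetric position,
\[
F_{i,j}\;:=\;v_1\cdots v_i\,v_jv_{j-1}\cdots v_{i+1}\,y\,v_{j+1}\cdots v_\ell v_1,
\]
valid for $1\le i$ and $i+2\le j\le\ell-1$, together with the natural wrap-around variants in which $j=\ell$. Besides the edges inherited from $C$, the only new adjacency checks are $c_i$ versus $c_j$ at the $x$ (resp.\ $y$) vertex, and the two constraints on the chord colour $\beta:=c(v_{i+1}v_{j+1})$ in $E_{i,j}$ or $\delta:=c(v_iv_j)$ in $F_{i,j}$. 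Here the no-monochromatic-triangle hypothesis is decisive: applied to $\{v_{i+1},v_{j+1},x\}$ it forces $\beta\ne c_{i+1}$ whenever $c_{i+1}=c_{j+1}$, and applied to $\{v_i,v_j,y\}$ it forces $\delta\ne c_{i-1}$ whenever $c_{i-1}=c_{j-1}$. Consequently, if $c_i\ne c_j$ together with $c_{i+1}=c_{j+1}$ (or $c_{i-1}=c_{j-1}$), the corresponding chord swap would be proper, contradicting the hypothesis. The contrapositives give
\[
c_a=c_b\;\Longrightarrow\;c_{a+1}=c_{b+1}\qquad\text{and}\qquad c_a=c_b\;\Longrightarrow\;c_{a-1}=c_{b-1},
\]
so cyclic shifts preserve the colour-equivalence relation on the edges of $C$.

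Combining the two steps finishes the argument: the odd-indexed edges of $C$ all carry $\rho$, and shift-preservation then forces the even-indexed edges to share a common colour $\sigma$, giving $c_i=c_{i+2}$ for every $i$ and the claimed alternation. The main obstacle I foresee is the boundary behaviour of the chord swap: the implications above only arise directly for index pairs in a restricted range, and to show that $c_\ell$ joins the even-indexed class one has to invoke the $j=\ell$ variant of $F_{i,j}$ together with an additional no-monochromatic-triangle instance on $\{v_i,v_\ell,y\}$; once these wrap-around cases are handled, the cyclic periodicity is immediate.
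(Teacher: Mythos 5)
This claim is quoted in the paper from \cite{CHY} without proof, so there is no in-paper argument to measure you against; your reconstruction follows what is essentially the standard route (insert the two followers $x,y$ as a consecutive pair to constrain $\rho=c(xy)$, then use chord-swap cycles together with the no-monochromatic-triangle hypothesis to kill the chord branch of the disjunction and propagate colour equalities). I checked the three families $D_k$, $E_{i,j}$, $F_{i,j}$: they do yield exactly the constraints you state, the triangles $\{v_{i+1},v_{j+1},x\}$ and $\{v_i,v_j,y\}$ are legitimately of the forbidden type, and the wrap-around instances ($j=\ell$, and $i=1$ with $c_0$ read as $c_\ell$) that you flag as the main obstacle do go through, so the shift implications hold for all cyclic index pairs that are ever needed.

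The one genuine flaw is the sentence ``Assuming (by symmetry) $\rho=c_1$ \dots\ so $\ell$ is even.'' There is no symmetry exchanging the two cases, because $v_1$ is distinguished: $D_k$ must contain $v_1$, so $k$ ranges only over $2,\dots,\ell$ and your constraint list $\rho\in\{c_{k-1},c_k\}$ omits the pair $\{c_\ell,c_1\}$. In the alternative case $\rho=c_2$ the chain reads $\rho=c_2=c_4=\cdots$, and when $\ell$ is odd it terminates at $c_{\ell-1}$ with no contradiction, so step 1 alone does not establish that $\ell$ is even. The gap is harmless only because step 2 repairs it: the shift implications turn the single equality $c_2=c_4$ into $c_j=c_{j+2}$ for every $j$, and for odd $\ell$ this forces all edges of $C$ to carry one colour, contradicting the properness of $C$ (e.g.\ $c_\ell\ne c_1$ at $v_1$). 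You should therefore either treat the cases $\rho=c_1$ and $\rho=c_2$ separately, or postpone the parity conclusion until after the shift implications are in hand; as written, the claim that $\ell$ is even at the end of step 1 is not justified.
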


\begin{claim}\label{claim2}
Suppose there is a cycle $C$ of length $\ell$ containing $v_1$, but no proper cycle of length $\ell+1$ containing $v_1$ in $(G,c)$,
and suppose there is no monochromatic triangle contains two vertices in $V(C)$ and a vertex in $V(G)\setminus V(C)$.
If the number of follower vertices is larger than 2, then

(1) \ for every follower vertex $w$ and every two distinct vertices $v_i$ and $v_j$ in $C$, we have
$c(wv_i)\neq c(wv_j)$, and

(2) \ $C$ has the $DP_w$ for every $w \in W_2$, where $DP_w$ is defined in Definition \ref{DP}.
\end{claim}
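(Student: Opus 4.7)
The plan is to prove both parts by contradiction, each time producing a proper cycle of length $\ell+1$ through $v_1$ whose existence is forbidden by hypothesis.

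For part (1), suppose some follower vertex $w$ violates the conclusion, so $c(wv_i) = c(wv_j)$ for some $i<j$. Assume without loss of generality that $w$ follows $C$ increasingly, which immediately gives $c(v_iv_{i+1}) = c(v_jv_{j+1})$. Because the number of follower vertices exceeds $2$, I can choose a second follower $w'\ne w$, and I split on whether $w'$ follows $C$ in the same or the opposite direction. In the opposite-direction case, Claim \ref{claim1} forces $C$ to be an even cycle with exactly two colors appearing alternately; the rigid two-color structure then allows me to splice $w$ in between $v_i$ and $v_{j+1}$, replacing the arc $v_iv_{i+1}\ldots v_j v_{j+1}$ by $v_i w v_{j+1}$, and to reinsert one new vertex (either $w'$ or a suitable vertex of the discarded arc) elsewhere on $C$ to restore total length $\ell+1$, with each properness check reduced to the alternating pattern. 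In the same-direction case I use $w$ and $w'$ simultaneously, inserting both of them at carefully chosen positions so that the new cycle has length $\ell+1$ and so that the equality $c(v_iv_{i+1}) = c(v_jv_{j+1})$ together with the ban on monochromatic triangles with two $C$-vertices and one outside vertex kills every potential color clash at an insertion point.

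For part (2), once (1) is in hand, I unpack Definition \ref{DP} clause by clause. For each $w \in W_2$ and each clause of $DP_w$, I assume the clause fails and then use part (1) (which guarantees that $w$ assigns distinct colors to distinct vertices of $C$), together with the monochromatic-triangle-free hypothesis and the presence of a third follower, to splice a detour through $w$ (and when needed through a second follower) into $C$, producing a proper $(\ell+1)$-cycle through $v_1$. The bookkeeping is the same as in part (1): the extra follower plays the role of a reserve vertex that lets me control both the length and the properness of the new cycle.

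The main obstacle in every sub-case is not merely constructing some proper $(\ell+1)$-cycle but ensuring that the cycle actually passes through $v_1$. Naive splicings of $C$ easily discard $v_1$, so the insertion positions and the choice of the second follower $w'$ must be tuned so that $v_1$ survives in the new cycle; this is precisely the reason the hypothesis needs strictly more than two follower vertices rather than only two, and it is what drives the direction-based case analysis on $w$ and $w'$.
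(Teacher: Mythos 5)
First, note that the paper does not actually prove this claim: it is quoted verbatim from \cite{CHY} and used as a black box (in the proof of Lemma \ref{lemma2}), so there is no in-paper argument to compare yours against. Judged on its own terms, your proposal is a plan rather than a proof, and it contains concrete gaps. The most immediate one is arithmetic: replacing the arc $v_iv_{i+1}\cdots v_jv_{j+1}$ of $C$ by the path $v_iwv_{j+1}$ deletes $j-i$ vertices and adds one, so the resulting cycle has $\ell-(j-i)+1$ vertices; ``reinserting one new vertex elsewhere'' brings you to $\ell-(j-i)+2$, which equals $\ell+1$ only when $j=i+1$. For a general pair $v_i,v_j$ your construction simply does not produce a cycle of the required length, and in the case $j=i+1$ it degenerates to inserting the follower $w$ into a single edge of $C$ --- which is exactly the operation that always fails for a follower, since $c(wv_{j})=c(v_{j}v_{j+1})$ forces a color clash at $v_{j}$. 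You never confront this basic obstruction, and none of the properness checks you invoke (``the alternating pattern reduces each check,'' ``kills every potential color clash'') is actually carried out.

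There is also a conceptual problem with your opposite-direction sub-case. If two followers go in different directions, Claim \ref{claim1} forces $C$ to be an even cycle with two colors alternating; but then any increasing follower $w$ satisfies $c(wv_i)=c(v_iv_{i+1})=c(v_{i+2}v_{i+3})=c(wv_{i+2})$ for all $i$, so conclusion (1) is automatically \emph{false} in that configuration. Hence the opposite-direction scenario cannot be treated as a sub-case in which you derive a contradiction from one assumed equality $c(wv_i)=c(wv_j)$; it must be excluded outright, by exhibiting a proper $(\ell+1)$-cycle through $v_1$ built from two oppositely-directed followers and the alternating structure (this is precisely what the present paper does inside Lemma \ref{lemma2} with the three explicit cycles $v_\ell w_1v_1v_3w_2v_4\overrightarrow{C}v_\ell$, etc.). Your stated rationale for the hypothesis ``more than two followers'' (keeping $v_1$ alive in the spliced cycle) is also not the operative one --- the point is pigeonhole on directions together with the elimination of the mixed-direction case. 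Finally, part (2) of your write-up is pure assertion: $DP_w$ failing means $c(v_iv_j)\notin\{c(wv_i),c(wv_j)\}$ for some chord $v_iv_j$, and the required $(\ell+1)$-cycle must be built explicitly from that chord, $w$, and a second follower; ``splice a detour through $w$'' with ``the same bookkeeping as in part (1)'' inherits all the gaps above.
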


In this paper, we solve the conjecture by adding a looser condition that the edge-colored complete graph
can have monochromatic triangles but not any two joint monochromatic triangles. Our main result is stated as follows.

\begin{theorem}\label{main}
Let $(G,c)$ be an edge-colored complete graph on $n\geq 3$ vertices. If $\delta^c(G)\geq \frac{n+1}{2}$ and $(G,c)$
contains no joint monochromatic triangles, then $(G,c)$ is properly vertex-pancyclic.
\end{theorem}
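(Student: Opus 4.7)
The plan is to follow the inductive scheme Chen, Huang and Yuan used to prove Theorem~\ref{th1}, inducting on the cycle length $\ell$ from $3$ up to $n$: fix $v_1\in V(G)$, take a proper cycle of length $\ell$ through $v_1$, and extend it to one of length $\ell+1$ through $v_1$. For the base case $\ell=3$, the condition $d^c(v_1)\ge(n+1)/2$ supplies neighbours $u,w$ with $c(v_1u)\ne c(v_1w)$, and a short counting argument using the colour-degree bound and the weaker joint-triangle-free assumption then adjusts $u$ or $w$ so that $v_1uw$ is a proper triangle. The heart of the matter is the inductive step, where I assume that a proper $\ell$-cycle $C=v_1v_2\cdots v_\ell v_1$ through $v_1$ exists but no proper $(\ell+1)$-cycle through $v_1$ does, and try to drive toward a contradiction by locating each point in the proof of Theorem~\ref{th1} that invoked ``$(G,c)$ has no monochromatic triangle'' and replacing it by one that uses only ``$(G,c)$ has no two monochromatic triangles sharing an edge''.

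A fortunate observation is that Claims~\ref{claim1} and~\ref{claim2} are already stated locally: their hypothesis requires only that there be no monochromatic triangle with two vertices on the chosen cycle $C$ and one vertex outside $C$. I therefore split the inductive step into two cases. In Case~A, for every proper $\ell$-cycle through $v_1$ there is no such bad monochromatic triangle, so Claims~\ref{claim1} and~\ref{claim2} apply verbatim and the remainder of the Chen--Huang--Yuan argument in~\cite{CHY} produces the contradiction directly. In Case~B, some proper $\ell$-cycle $C$ through $v_1$ does admit a monochromatic triangle $T=v_iv_jx$ of colour $\alpha$ with $v_i,v_j\in V(C)$ and $x\in V(G)\setminus V(C)$. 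Here the joint-triangle-free hypothesis is decisive: each of the edges $v_iv_j$, $v_ix$, $v_jx$ lies in no other monochromatic triangle of colour $\alpha$, so almost every other edge incident with $\{v_i,v_j,x\}$ receives a colour different from $\alpha$. Combining this rigidity with the degree condition $\delta^c(G)\ge(n+1)/2$, I intend to rotate $C$ at $v_i$ or $v_j$---swapping $v_iv_{i+1}$ for the $2$-path $v_ixv_{i+1}$ when $j=i+1$, or rerouting both arcs of $C-v_iv_j$ through $x$ when $v_i,v_j$ are non-adjacent on $C$---to produce either a proper $(\ell+1)$-cycle through $v_1$ at once, or a new proper $\ell$-cycle through $v_1$ that falls under Case~A.

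The main obstacle will be Case~B when $v_i,v_j$ are non-adjacent on $C$ or when several distinct obstructing triangles appear across different proper $\ell$-cycles through $v_1$. The surgery that splices $x$ into $C$ has to preserve properness along both arcs of $C-v_iv_j$, and one then must verify that the surgery did not accidentally create a second monochromatic triangle joint with $T$ or with any monochromatic triangle already present. The key leverage is that any two monochromatic triangles in $(G,c)$ are edge-disjoint, so the number of monochromatic triangles sharing an edge with a fixed short arc of $C$ is tightly controlled; combined with $\delta^c(G)\ge(n+1)/2$ this should always yield a colour-admissible choice for the new cycle. Making this counting clean---and guaranteeing that Case~B collapses to Case~A after a single surgery rather than looping back to Case~B---is the technical core of the plan.
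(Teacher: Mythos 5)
Your overall frame---induct on $\ell$, assume a proper $\ell$-cycle $C$ through $v_1$ but no proper $(\ell+1)$-cycle, and isolate the places where the Chen--Huang--Yuan argument uses triangle-freeness---matches the paper's starting point, and your observation that Claims~\ref{claim1} and~\ref{claim2} have only local hypotheses is correct and is indeed exploited (in the proof of Lemma~\ref{lemma2}). But there is a genuine gap in Case~B, and it is exactly the part you defer as ``the technical core of the plan.'' The difficulty is not an isolated obstructing triangle that can be removed by one rotation. The paper's Lemma~\ref{lemma1} shows that \emph{every} vertex $w$ outside $C$ that is neither a single-colour vertex ($W_1$) nor a follower ($W_2$) necessarily sits in a monochromatic triangle $wv_{y(w)}v_{y(w)+1}$ astride an edge of $C$, and moreover the colours of all of $\partial(w,V(C))$ are rigidly determined by a three-arc decomposition $P_C^1(w),P_C^2(w),P_C^3(w)$: $w$ follows $C$ increasingly on one arc, decreasingly on another, and is constant on the third. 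That rigidity is precisely what blocks every splice of $w$ into $C$---the colours at $w$ agree with the cycle colours along the first two arcs, so no insertion $v_kwv_{k+1}$ or longer reroute is proper. Consequently Case~B cannot ``collapse to Case~A after a single surgery'': the set $W_3(C)$ of such vertices persists, and the paper must instead bound $|W_3(C)|\le 3$ (Corollary~\ref{coro1}), control the interaction of $W_1,W_2,W_3$ (Lemmas~\ref{lemma2}--\ref{lemma3}), establish dependence properties on the rainbow subarcs $R_C(w),Q_C(w)$ (Lemmas~\ref{lemma5}--\ref{lemma7}), and finally derive the contradiction from colour-degree counting against $\delta^c(G)\ge\frac{n+1}{2}$ via an orientation argument (Propositions~\ref{p1}--\ref{p5}). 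The cycle transformation $C\mapsto C_{u_i}$ of Lemma~\ref{lemma9}, which is the closest analogue of your ``surgery,'' only shrinks $|P_C^3|$; it never produces the desired $(\ell+1)$-cycle nor a triangle-free configuration.

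A secondary concern: even your Case~A is not self-contained, since the remainder of the argument in \cite{CHY} may invoke the absence of monochromatic triangles in configurations other than ``two vertices on $C$, one off'' (e.g.\ triangles inside $V(G)\setminus V(C)$ when counting colour degrees of vertices outside $C$), so ``applies verbatim'' needs checking rather than assertion. As written, the proposal is a plausible strategy outline whose decisive steps---the structure theorem for $W_3(C)$ and the global colour-degree count---are missing, and whose proposed mechanism for Case~B does not survive contact with the actual rigidity forced by the hypotheses.
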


The following new definitions are needed in the sequel.

\begin{defi}
Let $v_i$ and $v_j$ be two distinct vertices on a cycle $C$.
The distance between $v_i$ and $v_j$ (denoted by $d_{ij}$) is the length of the shortest path
of $v_i\overrightarrow{C}v_j$ and $v_i\overleftarrow{C}v_j$.
Apparently, $d_{ij}=d_{ji}=min\{|i-j|,|j-i|,|i+l-j|,|j+l-i|\}\leq \frac{l}{2}$.
Furthermore, we say that $v_i$ is in front of $v_i$ on $C$ if $d_{ij}=|v_i\overrightarrow{C}v_j|$.
\end{defi}

The authors in \cite{FM} gave a property on set version.

\begin{defi} \noindent {\bf (Set version)}
In an edge-colored complete graph $(G,c)$, a set $A$ of vertices is
said to have dependence property with respect to a vertex $v\notin A$
(denoted by $DP_v$) if $c(aa')\in\{c(va),c(va')\}$ for every two vertices $a,a'\in A$.
\end{defi}

Then, based on the definition on set version,
we give a similar definition on vertex version.

\begin{defi} \label{DP}
\noindent {\bf (Vertex version)}
In an edge-colored complete graph $(G,c)$, a pair $(u,w)$ of distinct vertices is
said to have dependence (independence) property with respect to another vertex $v$
(denoted by $DP_v$) if $c(uw)\in\{c(vu),c(vw)\}$ ($c(uw)\notin\{c(vu),c(vw)\}$)
for every two vertices $u,w\in A$.
The set of these vertices pairs is denoted by $D_v$ ($I_v$).
\end{defi}

The following is an important fact appearing in \cite{FM}, which will be used later.

\begin{fact}\cite{FM}\label{fact1}
If a set $A$ of vertices in an edge-colored complete graph $(G,c)$
has the $DP_v$ for some vertex $v\notin A$, then there exists a vertex $a\in A$ such that

$a)$ \ $d^c_{A}(a)\leq \frac{|A|+1}{2}$, and

$b)$ \ if $|A|\geq 2$, then at least one of the colors used at $a$ in $A$ is $c(va)$.
\end{fact}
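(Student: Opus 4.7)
The plan is to exploit the $DP_v$ hypothesis to partition (up to a mild overlap) the edges inside $A$ according to which endpoint's $v$-color they carry, and then to locate the required vertex by a single averaging argument.

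For each $a \in A$ I would introduce
\[
S(a) = \{a' \in A \setminus \{a\} : c(aa') = c(va)\},
\]
the set of vertices joined to $a$ by an edge sharing its color with $va$. Because the $DP_v$ property forces $c(aa') \in \{c(va), c(va')\}$ for every pair $a,a' \in A$, each unordered pair $\{a,a'\}$ is counted in $S(a)$ or in $S(a')$ (in both when $c(va) = c(va')$), which gives
\[
\sum_{a \in A} |S(a)| \;\geq\; \binom{|A|}{2}.
\]
By averaging, there exists $a^* \in A$ with $|S(a^*)| \geq \lceil (|A|-1)/2 \rceil$.

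Next I would split the color degree of $a^*$ inside $A$: the $|S(a^*)|$ edges from $a^*$ to $S(a^*)$ all carry the single color $c(va^*)$, while each of the remaining $|A|-1-|S(a^*)|$ edges from $a^*$ into $A \setminus (\{a^*\}\cup S(a^*))$ contributes at most one further color. Summing,
\[
d^c_A(a^*) \;\leq\; 1 + \bigl(|A|-1-|S(a^*)|\bigr) \;\leq\; |A| - \left\lceil \tfrac{|A|-1}{2} \right\rceil \;=\; \left\lfloor \tfrac{|A|+1}{2} \right\rfloor \;\leq\; \tfrac{|A|+1}{2},
\]
which gives (a). For (b), when $|A|\geq 2$ the averaging bound already yields $|S(a^*)| \geq \lceil (|A|-1)/2 \rceil \geq 1$, so the color $c(va^*)$ is genuinely realized on at least one edge $a^* a'$ with $a'\in A$, precisely the required conclusion.

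The only delicate point is the double counting: when $c(va) = c(va')$ a pair can lie in both $S(a)$ and $S(a')$, so $\sum_{a}|S(a)| \geq \binom{|A|}{2}$ is an inequality rather than an equality; fortunately the averaging step only uses the inequality, so no correction is needed. Beyond this, the argument is essentially arithmetic, resting on the identity $|A| - \lceil (|A|-1)/2 \rceil = \lfloor (|A|+1)/2 \rfloor$ and on verifying the degenerate case $|A|=1$ (where (b) is vacuous and (a) reduces to $0 \leq 1$); I do not anticipate any structural obstacle.
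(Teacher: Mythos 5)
Your proof is correct. The paper itself cites this fact from [FM] without reproducing a proof, and your double-counting of the sets $S(a)$ followed by averaging (which simultaneously yields $|S(a^*)|\geq 1$ for part $b)$ and the bound $d^c_A(a^*)\leq |A|-|S(a^*)|\leq \lfloor\frac{|A|+1}{2}\rfloor$ for part $a)$) is exactly the standard argument used there, so there is nothing to add.
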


\begin{theorem}\cite{FM}\label{C3}
Let $(G,c)$ be an edge-colored completed graph on $n\geq 3$ vertices such that $\delta^c(G)\geq \frac{n+1}{2}$.
Then every vertex of $(G,c)$ is contained in a rainbow triangle.
\end{theorem}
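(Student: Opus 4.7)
The plan is to proceed by contradiction: fix $v\in V(G)$, assume $v$ lies in no rainbow triangle, and derive a contradiction with $\delta^c(G)\geq \frac{n+1}{2}$. Let $d=d^c(v)$, let $c_1,\ldots,c_d$ be the distinct colors at $v$, and set $A_i=\{u:c(vu)=c_i\}$, so that $\sum_i|A_i|=n-1$ and $d\geq \frac{n+1}{2}$. The no-rainbow-triangle-at-$v$ assumption is equivalent to the single key constraint: for all $i\neq j$, $u\in A_i$, $w\in A_j$, one has $c(uw)\in\{c_i,c_j\}$ (otherwise $vuw$ is rainbow).

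I would then zoom in on the singleton color classes. Let $S=\{i:|A_i|=1\}$ and write $A_i=\{u_i\}$ for $i\in S$. From $|A_j|\geq 2$ for $j\notin S$ together with $\sum_j|A_j|=n-1$, I get $|S|\geq 2d-n+1$; in particular $|S|\geq 2$. For $i\in S$, since there are no intra-class edges at $u_i$, every edge incident to $u_i$ other than $u_iv$ goes to some $A_j$ with $j\neq i$ and is therefore colored $c_i$ or $c_j$; hence the color set at $u_i$ is exactly $\{c_i\}\cup\{c_j:j\in S_{u_i}\}$, where $S_{u_i}=\{j\neq i:\exists w\in A_j \text{ with } c(u_iw)=c_j\}$. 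The hypothesis $d^c(u_i)\geq\frac{n+1}{2}$ then gives $|S_{u_i}|\geq\frac{n-1}{2}$, i.e., $|Z_{u_i}|\leq d-\frac{n+1}{2}$, where $Z_{u_i}:=\{1,\ldots,d\}\setminus(\{i\}\cup S_{u_i})$.

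The contradiction comes from a symmetric double-count over pairs of singletons. For any $i\neq i'$ in $S$, $c(u_iu_{i'})\in\{c_i,c_{i'}\}$; since $A_i=\{u_i\}$ and $A_{i'}=\{u_{i'}\}$, the value $c_i$ forces $i'\in Z_{u_i}$ and the value $c_{i'}$ forces $i\in Z_{u_{i'}}$, and exactly one of the two holds. Hence $\binom{|S|}{2}=\sum_{i\in S}|Z_{u_i}\cap S|\leq |S|\bigl(d-\frac{n+1}{2}\bigr)$, which simplifies to $|S|\leq 2d-n$, directly contradicting $|S|\geq 2d-n+1$. I expect the main obstacle to be spotting this symmetric double-count: the per-vertex bound $d^c(u_i)\leq d$ is too crude except in the extreme case $d=n-1$, and one must couple the $Z_{u_i}$ constraints across all singleton pairs to close the gap uniformly for every $d\in[\frac{n+1}{2},n-1]$.
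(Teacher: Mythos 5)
Your proof is correct; note that the paper itself gives no proof of this statement (it is quoted from Fujita--Magnant), so the comparison below is with the argument that the paper's surrounding machinery points to. Your steps all check out: the no-rainbow-triangle assumption at $v$ forces $c(uw)\in\{c_i,c_j\}$ for vertices in distinct color classes; the singleton count $|S|\geq 2d-n+1\geq 2$ is right; $d^c(u_i)=1+|S_{u_i}|$ holds because every edge at $u_i$ other than $u_iv$ leaves the singleton class $A_i$; and for a pair of singletons exactly one of $i'\in Z_{u_i}$ and $i\in Z_{u_{i'}}$ occurs, so $\binom{|S|}{2}=\sum_{i\in S}\lvert Z_{u_i}\cap S\rvert\leq |S|\bigl(d-\frac{n+1}{2}\bigr)$, giving $|S|\leq 2d-n$ against $|S|\geq 2d-n+1$. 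The route in \cite{FM} works on the same set: the singleton-class vertices form a set $A$ with the dependence property $DP_v$, Fact~\ref{fact1} supplies a single $a\in A$ with $d^c_A(a)\leq\frac{|A|+1}{2}$ and with $c(va)$ already among its colors inside $A$, and then $d^c(a)\leq\frac{|A|+1}{2}+(d-|A|)\leq\frac{n}{2}<\frac{n+1}{2}$. Your double count is essentially the averaging hidden inside Fact~\ref{fact1}, made exact ($\sum_{i\in S}\lvert Z_{u_i}\cap S\rvert$ equals $\binom{|S|}{2}$ rather than merely being bounded), so the two proofs share the same structural core but yours is self-contained: it never needs Fact~\ref{fact1}, its part (b), or the coloring-orientation argument behind it. What the \cite{FM} packaging buys is reuse of a lemma needed elsewhere in the paper; what yours buys is a shorter, elementary, symmetric argument that closes the gap uniformly for all $d\in[\frac{n+1}{2},\,n-1]$.
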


\begin{theorem}\cite{FM}\label{C4}
Let $(G,c)$ be an edge-colored completed graph on $n\geq 3$ vertices such that $\delta^c(G)\geq \frac{n+1}{2}$.
If $n\geq 4$, then every vertex is contained in a proper cycle of length 4,
and if $n\geq 13$, every vertex is contained in a proper cycle of length at least 5.
\end{theorem}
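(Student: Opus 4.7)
My plan is to induct on the cycle length $\ell$, pushing a proper $\ell$-cycle through an arbitrary vertex $v_1$ up to a proper $(\ell+1)$-cycle through $v_1$. The bases $\ell=3$ and $\ell=4$ are exactly Theorems \ref{C3} and \ref{C4}, and I would handle $\ell=n$ (the Hamiltonian case) by the same extension argument. So fix $4\le \ell \le n-1$ and a proper cycle $C=v_1v_2\cdots v_\ell v_1$ through $v_1$, and suppose for contradiction that no proper $(\ell+1)$-cycle through $v_1$ exists in $(G,c)$.

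Following the Fujita--Magnant and Chen--Huang--Yuan strategy, I would inspect, for each $x\in V(G)\setminus V(C)$, the color sequence $c(xv_1),c(xv_2),\ldots,c(xv_\ell)$ against the color sequence along $C$. Non-extendability forces rigid behavior on $x$: either $x$ is a \emph{follower vertex} (copying the colors of $C$ increasingly or decreasingly) or $V(C)$ has $DP_x$. A counting argument using $\delta^c(G)\ge \tfrac{n+1}{2}$ yields a large set $W$ of follower vertices together with a subset $W_2\subseteq W$ on which $V(C)$ has $DP_w$; Fact \ref{fact1} then extracts from $W_2$ a vertex $a$ whose color-degree is below $\tfrac{n+1}{2}$, contradicting $\delta^c(G)\ge \tfrac{n+1}{2}$. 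This is exactly the skeleton of the proof of Theorem \ref{th1} in \cite{CHY}.

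The substantive new work is adapting Claims \ref{claim1} and \ref{claim2}, which both hypothesize that \emph{no} monochromatic triangle meets $C$ in two vertices and $V(G)\setminus V(C)$ in one. Under our weaker hypothesis such a ``bad'' triangle $T=v_iv_{i+1}x$ can occur, but the no-joint-monochromatic-triangles condition forces any two monochromatic triangles to be disjoint in the appropriate sense. This makes bad triangles sparse: only $O(1)$ indices $i$ can have $v_iv_{i+1}$ sitting inside a bad monochromatic triangle, and for each follower $x\in W$ there is at most one bad index $i(x)$. I would then rerun the proofs of Claims \ref{claim1} and \ref{claim2} along the ``good arcs'' of $C$, concluding that the two-color alternation $c(v_iv_{i+1})=c(v_{i+2}v_{i+3})$ holds away from $i(x)$ and that the dependence property $DP_w$ still holds on $V(C)$ with at most a single exceptional index.

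Finally I would apply Fact \ref{fact1} to a pruned set $W'\subseteq W_2$ obtained by dropping the few $w$ whose bad index creates an irreparable failure of $DP_w$, producing a vertex $a\in W'$ with $d^c_{V(C)\cup W'}(a)\le \tfrac{|V(C)|+|W'|+1}{2}+O(1)$; combined with the edges from $a$ to $V(G)\setminus(V(C)\cup W')$, this still forces $d^c(a)<\tfrac{n+1}{2}$, the desired contradiction, with small $n$ dispatched directly by Theorems \ref{C3} and \ref{C4}. The main obstacle will be precisely this quantitative step: showing that the no-joint-triangle hypothesis really does bound the damage to the follower/alternation structure by a constant independent of $n$, so that the slack in $\delta^c(G)\ge \tfrac{n+1}{2}$ absorbs it, while carefully reconciling the vertex-version and set-version definitions of $DP_v$ used in Claims \ref{claim1} and \ref{claim2}.
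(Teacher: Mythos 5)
Your proposal does not prove the statement in question. Theorem \ref{C4} is a result of Fujita and Magnant that this paper simply imports from \cite{FM} without proof: it asserts, for \emph{every} edge-colored complete graph with $\delta^c(G)\geq\frac{n+1}{2}$ (no hypothesis whatsoever about monochromatic triangles), that each vertex lies on a proper 4-cycle when $n\geq 4$ and on a proper cycle of length at least 5 when $n\geq 13$. Your argument instead sketches the inductive $\ell\to\ell+1$ extension machinery behind Theorem \ref{main}, under the extra no-joint-monochromatic-triangles hypothesis that Theorem \ref{C4} does not grant you. Worse, the very first step is circular: you declare that ``the bases $\ell=3$ and $\ell=4$ are exactly Theorems \ref{C3} and \ref{C4},'' i.e.\ you assume the statement you were asked to prove in order to start your induction.

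Even setting the circularity aside, the approach cannot yield this theorem. Without any triangle restriction, the follower-vertex and $DP_w$ dichotomy of Claims \ref{claim1} and \ref{claim2} is unavailable in the form you use it, and your claim that bad monochromatic triangles are ``sparse ($O(1)$ indices)'' has no basis here, since nothing forbids many pairwise-joint monochromatic triangles in the general setting of Theorem \ref{C4}. The specific threshold $n\geq 13$ in the second assertion is itself a sign that the real proof (in \cite{FM}) is a bespoke short-cycle argument --- one starts from a rainbow triangle or proper 4-cycle through the given vertex and performs a careful case analysis with a counting step that produces the constant 13 --- rather than an instance of a general one-step extension lemma, which would give a threshold depending on $\ell$ in a different way. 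To prove this statement you would need to reproduce that direct argument, not adapt the extension framework of the present paper.
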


In this paper, a subgraph induced by $V(C)$ union a vertex $w\in V(G)\setminus V(C)$ is denoted by $G_C$.
Then, from Chen, Huang and Yuan \cite{CHY} we can get the following properties about $G_C$.

\begin{pro}\label{pro1}
Suppose there is no proper cycle of length $\ell+1$ containing $v_1$ in $(G,c)$.
Let $P$ be a proper path on $C$.
Let $v_a$ and $v_b$  be two distinct vertices in $V(P)$ such that $c(wv_a)=c(wv_b)$.
If there are no monochromatic triangles containing $w$ in $(G_C,c)$,
then $c(wv_{a- i})=c(wv_{b- i})$ ($c(wv_{a+ i})=c(wv_{b+ i})$) for $1\leq i\leq \ell$
if $w$ follows the colors of $C$ increasingly (decreasingly),
($v_{a\pm i}$ and $v_{b\pm i}$ are in $V(P)$).
\end{pro}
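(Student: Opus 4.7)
The plan is to induct on $i$ and reduce to the base case $i=1$, which I will prove by contradiction: assuming the base case fails, I will exhibit a proper cycle of length $\ell+1$ containing $v_1$, contradicting the standing hypothesis. I argue the increasing case; the decreasing case is handled symmetrically by reversing the orientation of $C$. Write $e_i := c(v_iv_{i+1})$. Because $w$ is an increasing follower, $c(wv_i)=e_i$ for every $i$, so the assumption $c(wv_a)=c(wv_b)$ becomes $e_a=e_b$, and the desired conclusion $c(wv_{a-i})=c(wv_{b-i})$ becomes $e_{a-i}=e_{b-i}$. The inductive step is immediate by applying the base case to the pairs $(a-j,b-j)$ successively for $j=1,\dots,i-1$, and the hypothesis $v_{a\pm i},v_{b\pm i}\in V(P)$ together with $P$ being a contiguous subpath guarantees that every intermediate index stays inside $V(P)$.

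For the base case, suppose $e_{a-1}\neq e_{b-1}$. First, $v_a$ and $v_b$ must be non-adjacent on $C$: in the case $b=a+1$ the equality $e_a=e_b$ would place two consecutive equal-colored edges inside the proper path $P$, a contradiction. So $v_av_b$ is a chord of $C$. Applying the no-monochromatic-triangle-through-$w$ hypothesis to the triangle $v_awv_b$, whose edges $wv_a$ and $wv_b$ both have color $e_a$, forces $c(v_av_b)\neq e_a$. Now consider
$$C' := v_1v_2\cdots v_{a-1}\,w\,v_{b-1}v_{b-2}\cdots v_a\,v_b\,v_{b+1}\cdots v_\ell v_1,$$
obtained from $C$ by deleting the edges $v_{a-1}v_a$ and $v_{b-1}v_b$ and inserting the three edges $v_{a-1}w$, $wv_{b-1}$ and the chord $v_av_b$. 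It has length $\ell+1$ and contains $v_1$. Every consecutive pair of edges of $C'$ has distinct colors: at the two $w$-junctions one uses $e_{a-2}\neq e_{a-1}$, $e_{a-1}\neq e_{b-1}$ (the contradiction assumption), and $e_{b-1}\neq e_{b-2}$; at the two endpoints of the chord one uses $e_a\neq c(v_av_b)$ and $c(v_av_b)\neq e_b=e_a$, both coming from the triangle argument; and all remaining adjacencies are inherited from $C$. Thus $C'$ is a proper cycle of length $\ell+1$ through $v_1$, a contradiction, so $e_{a-1}=e_{b-1}$.

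The main obstacle I anticipate is the index bookkeeping near the wrap-around of $C$: when $a=1$, $b=\ell$, or similar boundary configurations arise, the explicit form of $C'$ above has to be rewritten so that $v_1$ still lies on a $C$-inherited arc of the cycle and so that the edges being replaced are actually present in $C$. These cases are routine but need a little care. I am also implicitly using that $C$ itself is proper throughout, which is the standard setting in which this proposition is invoked; the subpath $P$ in the statement then simply identifies the range of indices to which the conclusion applies.
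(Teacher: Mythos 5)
Your argument is correct and uses exactly the rerouting technique that this paper (and \cite{CHY}, from which Proposition~\ref{pro1} is quoted here without proof) employs throughout: delete the edges $v_{a-1}v_a$ and $v_{b-1}v_b$, insert $w$ between $v_{a-1}$ and $v_{b-1}$, and close up with the chord $v_av_b$, whose color is controlled by the no-monochromatic-triangle hypothesis applied to $wv_av_b$. The base-case cycle, the properness checks at the two $w$-junctions and at the chord, and the iteration from $i=1$ to general $i$ all check out, with only the routine wrap-around bookkeeping you already flag left implicit.
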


\begin{pro}\label{pro2}
Suppose there is no proper cycle of length $\ell+1$ containing $v_1$ in $(G,c)$.
Let $P$ be a proper path on $C$.
If $w $ follows the colors of $P$ increasingly (decreasingly)
and there are two vertices $v_i,v_j \in V(P)$ such that
$v_j$ is in front of $v_i$ and $c(wv_i)\neq c(wv_j)$,
then we have $(v_{i+1},v_{j+1})\in D_w$
($(v_{i-1},v_{j-1})\in D_w$).
\end{pro}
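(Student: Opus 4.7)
The plan is to argue by contradiction in each direction; I describe the ``increasingly'' case in detail, the ``decreasingly'' case being symmetric. Assume, towards a contradiction, that $(v_{i+1}, v_{j+1}) \notin D_w$, that is,
\[
c(v_{i+1}v_{j+1}) \notin \{c(wv_{i+1}),\ c(wv_{j+1})\}.
\]
My goal is to produce a proper cycle of length $\ell+1$ through $v_1$, which contradicts the hypothesis of the proposition.

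To construct it, I reroute $C$ by inserting $w$ between $v_i$ and $v_j$, reversing the arc $v_{i+1}\overrightarrow{C}v_j$, and closing with the chord $v_{i+1}v_{j+1}$. Formally, set
\[
C' \;=\; v_1 v_2 \cdots v_i\, w\, v_j v_{j-1} \cdots v_{i+2} v_{i+1}\, v_{j+1} v_{j+2} \cdots v_\ell v_1.
\]
This visits every vertex of $V(C)\cup\{w\}$, so it has length $\ell+1$ and contains $v_1$. Verifying properness reduces to checking five transitions near the three ``new'' edges $v_iw$, $wv_j$, $v_{i+1}v_{j+1}$. At $v_i$: the follow identity gives $c(v_iw)=c(v_iv_{i+1})$, which differs from $c(v_{i-1}v_i)$ because $P$ is proper. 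At $w$: $c(wv_i)\neq c(wv_j)$ is the hypothesis. At $v_j$: the follow identity gives $c(wv_j)=c(v_jv_{j+1})$, which differs from $c(v_{j-1}v_j)$ by properness of $P$. At $v_{i+1}$: the incoming color is $c(v_{i+2}v_{i+1})=c(wv_{i+1})$ by the follow identity at $v_{i+1}$, which differs from $c(v_{i+1}v_{j+1})$ by the contradiction assumption. At $v_{j+1}$: the outgoing color is $c(v_{j+1}v_{j+2})=c(wv_{j+1})$, again differing from $c(v_{i+1}v_{j+1})$ by the contradiction assumption. Thus $C'$ is a proper $(\ell+1)$-cycle through $v_1$, contradicting the hypothesis. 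The ``decreasingly'' case is treated identically using the mirror cycle
\[
C'' \;=\; v_1 \cdots v_{i-1}\, v_{j-1} v_{j-2} \cdots v_{i+1} v_i\, w\, v_j v_{j+1} \cdots v_\ell v_1
\]
under the assumption $(v_{i-1},v_{j-1})\notin D_w$.

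The main obstacle is purely bookkeeping with indices: one must ensure that all of $v_{i\pm 1}$ and $v_{j\pm 1}$ lie in $V(P)$ so that the follow identities $c(wv_k)=c(v_kv_{k+1})$ (respectively $c(wv_k)=c(v_kv_{k-1})$) are actually available at these four vertices, and that the segments of $C$ inherited unchanged by $C'$ are themselves proper where they abut the new edges. Both points fall within the standing hypotheses of the proposition (``$P$ is proper'' and ``$w$ follows $P$''), provided $v_i,v_j$ are interior enough in $P$ for the four neighbors $v_{i\pm 1},v_{j\pm 1}$ to still be on $P$; this is how the proposition is meant to be invoked in the main arguments.
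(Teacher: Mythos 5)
Your proof is correct and is exactly the argument behind this proposition (which the paper itself does not prove but imports from \cite{CHY}): reroute $C$ by inserting $w$ between $v_i$ and $v_j$, reversing the intervening arc, and closing with the chord $v_{i+1}v_{j+1}$, so that the failure of $DP_w$ at $(v_{i+1},v_{j+1})$ together with the follow identities at $v_i,v_j,v_{i+1},v_{j+1}$ forces a proper $(\ell+1)$-cycle through $v_1$. This is the same rerouting construction the paper uses throughout its own lemmas, and your caveat that $v_{i\pm1},v_{j\pm1}$ must lie on $P$ (and that adjacent choices of $v_i,v_j$ make the conclusion hold trivially via the follow identity) is the right reading of how the proposition is invoked.
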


\begin{pro}\label{pro3}
Suppose there is no proper cycle of length $
ell+1$ containing $v_1$ in $(G,c)$.
Let $P$ be a proper path on $C$.
If $w$ follows the colors of $P$ increasingly (decreasingly)
and there are two vertices $v_i,v_j \in V(P)$ such that
$v_j$ is in front of $v_i$ and $(v_i,v_j)\notin D_w$,
then we have $c(wv_{i-1})=c(wv_{j-1})$
($c(wv_{i+1})=c(wv_{j+1})$).
\end{pro}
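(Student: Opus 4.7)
The plan is to prove the contrapositive in each case by constructing an explicit proper cycle of length $\ell+1$ through $v_1$, contradicting the hypothesis. I focus first on the increasing case; the decreasing case will be handled below by a mirror construction. So suppose $w$ follows the colors of $P$ increasingly, that $(v_i,v_j)\notin D_w$, and, toward contradiction, that $c(wv_{i-1})\neq c(wv_{j-1})$. The increasing follower property, applied at $v_{i-1},v_i,v_{j-1},v_j$, gives
\[
c(wv_{i-1})=c(v_{i-1}v_i),\; c(wv_i)=c(v_iv_{i+1}),\; c(wv_{j-1})=c(v_{j-1}v_j),\; c(wv_j)=c(v_jv_{j+1}).
\]

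The candidate cycle $\mathcal{C}'$ is obtained from $C$ by deleting the two edges $v_{i-1}v_i$ and $v_{j-1}v_j$, splicing the two resulting arcs via the chord $v_iv_j$, and closing with $w$:
\[
\mathcal{C}':\; w\; v_{i-1}\; v_{i-2}\; \cdots\; v_{j+1}\; v_j\; v_i\; v_{i+1}\; \cdots\; v_{j-1}\; w
\]
(indices modulo $\ell$). It contains every vertex of $V(C)\cup\{w\}$, hence has length $\ell+1$ and passes through $v_1$. Since the two long arcs are sub-paths of the proper cycle $C$, properness of $\mathcal{C}'$ reduces to six local checks: the pair of edges at $w$, the two edges at $v_{i-1}$, the two at $v_{j-1}$, and the two at each of $v_i,v_j$ that border the chord.

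Each of the six constraints is discharged by exactly one hypothesis. At $w$, the two incident colors are $c(v_{i-1}v_i)$ and $c(v_{j-1}v_j)$, which differ by the contrapositive assumption. At $v_{i-1}$ and $v_{j-1}$, the required inequalities $c(v_{i-2}v_{i-1})\neq c(v_{i-1}v_i)$ and $c(v_{j-2}v_{j-1})\neq c(v_{j-1}v_j)$ are just the properness of $C$ at those two vertices. At $v_i$ and $v_j$, the two inequalities
\[
c(v_iv_j)\neq c(v_iv_{i+1})=c(wv_i),\qquad c(v_iv_j)\neq c(v_jv_{j+1})=c(wv_j)
\]
are precisely the two components of $(v_i,v_j)\notin D_w$. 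Thus $\mathcal{C}'$ is a proper cycle of length $\ell+1$ containing $v_1$, a contradiction.

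The decreasing case is handled by the mirror cycle $w\, v_{i+1}\, v_{i+2}\, \cdots\, v_{j-1}\, v_j\, v_i\, v_{i-1}\, \cdots\, v_{j+1}\, w$, with the analogous six-point verification that now uses the decreasing follower identities $c(wv_k)=c(v_{k-1}v_k)$. The only substantive step in the whole argument is this local check, in which the three available sources of color inequality (the negated color equality, the two halves of $(v_i,v_j)\notin D_w$, and the properness of $C$) each must be used once. I expect the main obstacle to be purely notational: keeping the cyclic indexing consistent in degenerate configurations such as $v_1\in\{v_i,v_j\}$ or when one of the two arcs collapses to a single vertex (for example when $j=i+2$, where the reversed arc $v_{i-1}\cdots v_{j+1}$ becomes trivial). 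In each such boundary case some adjacency checks become vacuous but the same argument applies.
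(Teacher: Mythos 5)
The paper itself gives no proof of Proposition \ref{pro3} --- it is quoted from \cite{CHY} --- so there is nothing to compare against line by line; your argument, rerouting $C$ through the chord $v_iv_j$ and the path $v_{j-1}wv_{i-1}$ (i.e.\ the cycle with edge set $E(C)\setminus\{v_{i-1}v_i,\,v_{j-1}v_j\}\cup\{v_iv_j,\,wv_{i-1},\,wv_{j-1}\}$), is exactly the standard construction for statements of this type, and the adjacency checks at $w$, $v_{i-1}$, $v_{j-1}$, $v_i$, $v_j$ are all correct: the assumed inequality is spent at $w$, the two halves of $(v_i,v_j)\notin D_w$ at $v_i$ and $v_j$, and the properness of $C$ at $v_{i-1}$ and $v_{j-1}$. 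You correctly note (implicitly, via the $D_w$ condition) that the degenerate cases $j=i\pm1$ cannot occur, since there the chord is an edge of $C$ whose color equals $c(wv_i)$ or $c(wv_j)$ by the follower property. One caveat worth making explicit: your checks at $v_{i-1}$ and $v_{j-1}$ use the follower identities $c(wv_{i-1})=c(v_{i-1}v_i)$ and $c(wv_{j-1})=c(v_{j-1}v_j)$, which require $v_{i-1},v_{j-1}\in V(P)$. That reading is consistent with the parenthetical convention of Proposition \ref{pro1}, but the paper later invokes Proposition \ref{pro3} at the boundary of $P$ (e.g.\ in Lemma \ref{lemma5}, where the relevant predecessor $u_{s-1}$ lies in $P_C^3(w)$ rather than on $P$); in that situation the needed inequality at that vertex must be supplied instead by Lemma \ref{lemma1}, so your proof covers the clean statement but not every way the paper subsequently uses it.
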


\section{Proof of Theorem \ref{main}}

In this section we will use a few lemmas and propositions to prove our main result Theorem \ref{main}.

Let $V_1$ be a subset of $V(G)$ and $w$ a vertex of $G$ not in $V_1$.
We give a vertex-induced subgraph $G[V_1]$ a coloring orientation.
First, we orient the edges whose ends are the vertex pairs in $D_w$, that is, orient $v_iv_j$ by $\overrightarrow{v_iv_j}$ if $c(v_iv_j)=c(wv_i)$,
$v_iv_j$ by $\overrightarrow{v_jv_i}$ if $c(v_iv_j)=c(wv_j)$,
and arbitrarily orient $v_iv_j$ if $c(wv_i)=c(wv_j)$.
Next, orient $v_iv_j$ if $(v_i,v_j)\in I_w$ by two inverse arcs
$\overrightarrow{v_iv_j}$ and $\overleftarrow{v_iv_j}$.
Thus, we get a digraph $D(G[V_1])$ of $(G[V_1],c)$,
and $d_{G[V_1]}^c(v)\leq d_{D(G[V_1])}^-(v)+1$.

In the following, we always assume that $(G,c)$ is an edge-colored complete graph on $n\geq 3$ vertices such that $\delta^c(G)\geq \frac{n+1}{2}$, and does not contain any joint monochromatic triangles.
From Theorems \ref{C3} and \ref{C4}, we know that every vertex $v$ of $(G,c)$ is contained in some proper cycles of lengths 3 and 4.
To prove that $(G,c)$ is properly vertex-pancyclic, it suffices to show that if a vertex is contained in a proper $\ell$-cycle in $(G,c)$ for some $\ell$ with $4\leq \ell\leq n-1$, then it is also contained in a proper $(\ell+1)$-cycle in $(G,c)$.

Suppose that $(G,c)$ has a proper cycle $C=v_1v_2\cdots v_\ell v_1$ of length $\ell$ and let $v_1=v$. Let $W_1(C)$ be the set of vertices in $V(G)\setminus V(C)$ such that for each vertex $w\in W_1$,
the edges in $\partial(w, V(C))$ have just one color in $(G,c)$.
Let $W_2(C)$ be the set of vertices in $V(G)\setminus V(C)$
such that each vertex $w\in W_1$ follows the colors of $C$.
Let $W_3(C)=V(G)\setminus (V(C)\cup W_1(C)\cup W_2(C))$.
Note that $V(C), W_1(C), W_2(C), W_3(C)$ form a partition of $V(G)$.
For convenience, if $v_k\in V(C)$, we regard $v_{k}$ and $v_{k+\ell}$ (or $v_{k-\ell}$) as the same vertex in the sequel.
First, we analyze the coloring structure on $(G[V(C)\cup w],c)$ for $w\in W_3(C)$.

For each vertex $v\in V(G)$, we use $\partial(v)$ to denote the set of edges incident to $v$ in $G$. Moreover, for two disjoint subsets $X,Y\subseteq V(G)$, we use $\partial(X,Y)$ to denote the
set of edges between $X$ and $Y$ in $G$, i.e., $\partial(X,Y)=\{xy\in E(G): x\in X, y\in Y\}$. $\partial(\{x\},Y)$ will be simply written as $\partial(x,Y)$. The colors of $\partial(\{x\},Y)$ will be simply written as $\mathcal{C}(x,Y)$, that is, $\mathcal{C}(x,Y)=\{c(xy)~|~y\in Y\}$.
For a cycle $C=v_1v_2\ldots v_\ell v_1$ and two vertices $v_i,v_j\in V(C)$ with $1\le i\le j \le \ell$, we use $v_i\overrightarrow{C}v_j$ and  $v_i\overleftarrow{C}v_j$ to denote the paths $v_iv_{i+1}\ldots v_j$ and $v_iv_{i-1}\ldots v_1v_{\ell}v_{\ell-1} \ldots v_j$, respectively.

\begin{lemma}\label{lemma1}
Suppose there is no proper cycle of length $\ell+1$ containing $v_1$ in $(G,c)$. Then for any vertex $w$ in $W_3(C)$, we can find three and only three vertices $v_{x(w)}$, $v_{y(w)}$ and $v_{z(w)}$ in $V(C)$
which can divide $V(C)$ into three subsets:
$$\left\{\begin{array}{lll}
P_C^1(w)&=&\{v_{x(w)},v_{{x(w)}+1},\ldots, v_{y(w)}\},\\[2mm]
P_C^2(w)&=&\{v_{{y(w)}+1},v_{{y(w)}+2},\ldots, v_{z(w)}\},\\[2mm]
P_C^3(w)&=&\{v_{{z(w)}+1},v_{{z(w)}+2},\ldots, v_{{x(w)}-1}\},
\end{array}\right.$$
such that\\

(1) \ $\left\{\begin{array}{l}
c(wv_{x(w)})=c(v_{x(w)}v_{{x(w)}+1})~ while~
c(wv_{{x(w)}-1})\neq c(v_{x(w)}v_{{x(w)}-1}),\\
c(wv_{z(w)})=c(v_{z(w)}v_{{z(w)}-1}) ~while~
c(wv_{{z(w)}+1})\neq c(v_{z(w)}v_{{z(w)}+1}),\\
c(wv_{y(w)})=c(wv_{{y(w)}+1})=c(v_{y(w)}v_{{y(w)}+1});
\end{array}\right.$

(2) \ $\left\{\begin{array}{lll}
c(wv_i)= c(v_iv_{i+1})&  &for~v_i\in P_C^1(w),\\
c(wv_i)= c(v_iv_{i-1})&  &for~v_i\in P_C^2(w),\\
c(wv_i)=c(wv_j)&  & for~v_i,v_j\in P_C^3(w).
\end{array}\right.$.

\end{lemma}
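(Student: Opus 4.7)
My plan is to translate the non-extendability hypothesis into local colour constraints, assign each vertex of $C$ one of three labels based on how $c(wv_i)$ relates to the neighbouring edges of $C$, and then force the cyclic label sequence to consist of exactly one $I$-block, one $D$-block and one $N$-block. First I observe that for every $i$ the cycle $v_1\cdots v_iwv_{i+1}\cdots v_\ell v_1$ obtained by splicing $w$ into $C$ between $v_i$ and $v_{i+1}$ cannot be a proper $(\ell+1)$-cycle through $v_1$, so at every edge $v_iv_{i+1}$ at least one of
\[
c(wv_i)=c(v_{i-1}v_i),\qquad c(wv_i)=c(wv_{i+1}),\qquad c(wv_{i+1})=c(v_{i+1}v_{i+2})
\]
must hold. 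I label each $v_i$ as $I$ when $c(wv_i)=c(v_iv_{i+1})$, as $D$ when $c(wv_i)=c(v_{i-1}v_i)$, and as $N$ otherwise; these labels are pairwise exclusive because $C$ is proper. Applying the constraint at each edge rules out the transitions $I\to N$ and $N\to D$ (each forces $c(wv_i)=c(wv_{i+1})$, after which the $I$ or $D$ side determines a colour contradicting the $N$ label on the other). Hence the only admissible transitions are $I\to I,\ I\to D,\ D\to I,\ D\to D,\ D\to N,\ N\to I,\ N\to N$. The hypothesis $w\in W_3(C)$ excludes the degenerate all-$I$, all-$D$ and all-$N$ patterns, since in the last case the $N\to N$ constraint cascades to a single common colour on all $c(wv_i)$, forcing $w\in W_1(C)$.

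The crucial step is to show that at most one $I\to D$ transition can occur cyclically around $C$. At any such transition $(v_i,v_{i+1})$ the definitions give $c(wv_i)=c(v_iv_{i+1})=c(wv_{i+1})$, so $wv_iv_{i+1}$ is a monochromatic triangle of colour $c(v_iv_{i+1})$; two distinct $I\to D$ transitions would therefore create two monochromatic triangles sharing the vertex $w$, contradicting the no-joint-monochromatic-triangles hypothesis. Combined with the admissible-transition catalogue, this forces the cyclic label word to be one maximal $I$-block, immediately followed by one maximal $D$-block, and then an $N$-block (possibly empty). Setting $v_{x(w)}$ to be the first $I$-vertex, $v_{y(w)}$ the last $I$-vertex, and $v_{z(w)}$ the last $D$-vertex yields the required partition, and uniqueness of these three vertices follows because the three maximal blocks are intrinsically determined by the label sequence.

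Property (2) is immediate from the labels, with the common $N$-block colour arising from the $N\to N$ propagation argument. Property (1) is then read off from the block boundaries: $v_{x(w)}$ is $I$ so $c(wv_{x(w)})=c(v_{x(w)}v_{x(w)+1})$, while $v_{x(w)-1}$ is either $N$ or $D$, and in either case $c(wv_{x(w)-1})\neq c(v_{x(w)-1}v_{x(w)})$ (by the definition of $N$, or by properness of $C$ in the $D$ case); the statement at $v_{z(w)}$ is symmetric; and the unique $I\to D$ transition at $(v_{y(w)},v_{y(w)+1})$ supplies the triple equality $c(wv_{y(w)})=c(v_{y(w)}v_{y(w)+1})=c(wv_{y(w)+1})$. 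The main obstacle is the uniqueness of the $I\to D$ transition, which is exactly where the weakening from Chen--Huang--Yuan's ``no monochromatic triangle'' hypothesis to the present ``no joint monochromatic triangles'' hypothesis shows its bite; if the vertex-sharing argument above does not quite match the paper's precise meaning of ``joint'', I would instead invoke Propositions \ref{pro1}--\ref{pro3} together with the colour-degree bound $\delta^c(G)\geq(n+1)/2$ to construct a proper $(\ell+1)$-cycle through $v_1$ via some non-insertion route from any configuration with two $I\to D$ transitions, again yielding a contradiction.
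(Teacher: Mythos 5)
Your proof is correct and follows essentially the same route as the paper: classify each $v_i$ by whether $c(wv_i)$ equals the forward edge colour, the backward edge colour, or neither, use the non-insertability of $w$ at each edge of $C$ to constrain consecutive labels, and invoke the no-joint-monochromatic-triangles hypothesis at $w$ to show the $I\to D$ switch-point (the monochromatic triangle $wv_{y(w)}v_{y(w)+1}$) is unique, which forces the three-block structure. Your explicit transition catalogue is somewhat more self-contained than the paper's version, which delegates the existence of an $I$-vertex to a lemma quoted from Chen--Huang--Yuan, but the underlying argument is the same.
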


\begin{figure}[htbp]
  \centering
 \scalebox{1}{\includegraphics[width=1.4in,height=1.1in]{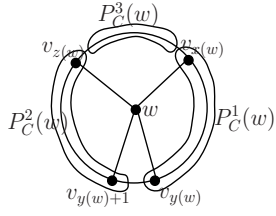}}\\
 \captionsetup{font={scriptsize}}
  \caption{$G[V(C)\cup \{w\}]$ used in the proof of Lemma 3.1.}\label{fig1}
\end{figure}

\begin{proof}
(1) \ Since $w$ does not follow the colors of $C$, suppose, to the contrary, that $c(wv_i)\neq c(v_iv_{i+1})$ for all $v_i$ in $V(C)$.
Thus, $(G_C,c)$ has no monochromatic triangles containing $w$.
Then from Lemma, we know that $w$ follows the colors of $C$ decreasingly
or $w$ is a single color vertex of $C$, a contradiction.
Thus, there exist two requested vertices $v_{x(w)}$ and $v_{z(w)}$.

As we know, $c(wv_{i+1})\in \{c(wv_{i}),~c(v_{i+1}v_{i+2})\}$ if $c(wv_i)= c(v_iv_{i+1})$ and
$c(wv_{i-1})\in \{c(wv_{i}),~c(v_{i-1}v_{i-2})\}$ if $c(wv_i)= c(v_iv_{i-1})$.
Since $w$ dose not follow the colors of $C$,
there exist two vertices $v_{y_1(w)}$ and $v_{y_2(w)}$
such that $c(wv_{y_1(w)})=c(wv_{y_1(w)-1})$
and $c(wv_{y_2(w)})=c(wv_{y_2(w)+1})$.
Thus, $wv_{y_1(w)}v_{y_1(w)-1}$ and $wv_{y_2(w)}v_{y_2(w)+1}$
are monochromatic.
Then, we have $v_{y(w)}=v_{y_2(w)}=v_{{y_1(w)}-1}$.
As $(G,c)$ has no joint monochromatic triangles, $v_{x(w)}$, $v_{y(w)}$ and $v_{z(w)}$ are the only three requested vertices.

(2) \ Since $v_{x(w)}$, $v_{y(w)}$ and $v_{z(w)}$ are the only three vertices satisfying (1), for any vertex $v_i\in P_C^3(w)$,
we have $c(wv_i)\notin\{c(v_iv_{i-1}),c(v_iv_{i+1})\}$.
If there is a vertex $v_j$ such that $c(wv_i)\neq c(wv_j)$,
then there must exist two adjacent vertices $v_k,v_{k+1}\in P_C^3(w)$ on $C$ such that $c(wv_k)\neq c(wv_{k+1})$.
Thus, $v_kwv_{k+1}\overrightarrow{C}v_k$ is a requested cycle; see Figure 1, a contradiction.
\end{proof}

Note that if $C$ is a proper cycle of length $\ell$ but does not contain  $v_1$, then we can also divide $V(C)$ into three parts such that
each part satisfying the results stated in Lemma \ref{lemma1}.
According to Lemma \ref{lemma1},
for any $w\in W_3(C)$, $wv_{y(w)}v_{y(w)+1}$ is a monochromatic triangle. We denote the color of this monochromatic triangle by $c_w$.
Since $(G,c)$ has no joint monochromatic triangles, $|W_3(C)|\leq \frac{\ell}{2}$.

\begin{lemma}\label{lemma2}
Suppose there is no proper cycle of length $\ell+1$ containing $v_1$ in $(G,c)$. If $|W_2(C)|\geq 2$, then

(1) \ for every vertex $w\in W_2(C)$ and every two distinct vertices $v_i$ and $v_j$ in $C$, we have $c(wv_i)\neq c(wv_j)$, and

(2) \ $C$ has the $DP_w$ for every $w \in W_2(C)$.
\end{lemma}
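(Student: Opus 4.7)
The plan is to proceed by contradiction, adapting the Chen--Huang--Yuan proof of Claim \ref{claim2} to our weaker no-joint-monochromatic-triangles hypothesis. Assume $(G,c)$ admits no proper $(\ell+1)$-cycle containing $v_1$, and fix $w\in W_2(C)$; without loss of generality $w$ follows $C$ increasingly, so $c(wv_k)=c(v_kv_{k+1})$ for every index $k$ (the decreasing case is symmetric, and the mixed-direction case is constrained by Claim \ref{claim1}).

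A preparatory observation is that a monochromatic triangle in $(G_C,c)$ through $w$ must take the form $wv_pv_q$ with $c(v_pv_{p+1})=c(v_qv_{q+1})=c(v_pv_q)$; by the no-joint hypothesis, any two such triangles are edge-disjoint, hence vertex-disjoint on $V(C)$. Thus the set of ``bad'' indices forms a partial matching of $\{1,\dots,\ell\}$, and long triangle-free arcs of $C$ remain on which Propositions \ref{pro1}--\ref{pro3} apply unchanged.

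I reduce (2) to (1) via an explicit insertion. Suppose $(v_a,v_b)\notin D_w$; after a cyclic relabelling that avoids the trivial boundary case (note that pairs adjacent on $C$ automatically lie in $D_w$ by the follower identity), we may assume $2\le a<b\le\ell$. Consider the $(\ell+1)$-cycle
\[
C^{*}=v_1v_2\cdots v_{a-1}\,w\,v_{b-1}v_{b-2}\cdots v_a\,v_b\,v_{b+1}\cdots v_{\ell}v_1,
\]
which contains $v_1$ together with all of $V(C)\cup\{w\}$. Using $c(wv_{a-1})=c(v_{a-1}v_a)$ and $c(wv_{b-1})=c(v_{b-1}v_b)$, the nontrivial properness checks reduce to $c(wv_{a-1})\ne c(wv_{b-1})$ at $w$ (which is exactly part (1)) and $c(v_av_b)\notin\{c(wv_a),c(wv_b)\}$ at $v_a$ and $v_b$ (the standing assumption $(v_a,v_b)\notin D_w$); the remaining vertices inherit properness directly from $C$. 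The resulting proper $(\ell+1)$-cycle contradicts our hypothesis.

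For part (1) itself, suppose $c(wv_i)=c(wv_j)$ for some $i\ne j$, equivalently $c(v_iv_{i+1})=c(v_jv_{j+1})$. Using the second follower $w'\in W_2(C)$, available since $|W_2(C)|\ge 2$, and splitting on whether $w'$ follows $C$ in the same direction as $w$ or the opposite, I propagate the equality on the triangle-free arcs via Proposition \ref{pro1}, extract a convenient subset with $DP_w$ through Proposition \ref{pro2} and Fact \ref{fact1}, and assemble a $C^{*}$-style construction producing the forbidden proper $(\ell+1)$-cycle containing $v_1$. The central obstacle is precisely that bad indices can interrupt the shift argument of Proposition \ref{pro1}; the no-joint hypothesis is what keeps these interruptions sparse (at most a partial matching) so the propagation still traverses enough of $C$ to close the construction. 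The boundary case $|W_2(C)|=2$, excluded by Claim \ref{claim2}, will need particular attention and is to be handled by playing $w'$ directly against $w$ in the cycle construction rather than relying on a third follower.
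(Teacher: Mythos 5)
Your reduction of (2) to (1) is sound: the cycle $C^{*}=v_1\cdots v_{a-1}wv_{b-1}\cdots v_av_bv_{b+1}\cdots v_\ell v_1$ is exactly the standard insertion, and your properness checks at $w$, $v_{a-1}$, $v_{b-1}$, $v_a$, $v_b$ are correct, as is the remark that consecutive pairs automatically lie in $D_w$. However, part (1) -- which is the entire substance of the lemma -- is never actually proved. You describe a plan (``propagate the equality \dots extract a convenient subset \dots assemble a $C^{*}$-style construction'') but do not execute any of it, and you yourself flag the boundary case $|W_2(C)|=2$ as unresolved. Since Claim \ref{claim2} requires more than two followers and the absence of \emph{all} monochromatic triangles between $V(C)$ and its complement, while Lemma \ref{lemma2} assumes only $|W_2(C)|\ge 2$ and permits (non-joint) monochromatic triangles, the case you defer is precisely the one the lemma is supposed to cover; leaving it to ``particular attention'' is a genuine gap, not a boundary nuisance.

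You also dispose of the mixed-direction case too quickly. Saying it is ``constrained by Claim \ref{claim1}'' is not enough: in the alternating two-colored configuration that Claim \ref{claim1} produces, one has $c(wv_i)=c(v_iv_{i+1})=c(v_{i+2}v_{i+3})=c(wv_{i+2})$ for all $i$, so conclusion (1) is outright false there -- the configuration must be \emph{eliminated}, not merely described. This elimination is where the paper's proof does its real work: it assumes two followers $w_1,w_2$ go in opposite directions, invokes Claim \ref{claim1} to get the alternating structure, uses the no-joint-monochromatic-triangle hypothesis to bound how often $c(w_iv_k)$ can coincide with $c(v_kv_{k+2})$, and then exhibits three explicit candidate $(\ell+1)$-cycles through $v_1$ built from both followers, at least one of which must be proper. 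Once all followers go the same direction, the paper argues that the hypotheses of Claim \ref{claim2} are met and cites it, rather than re-deriving its proof as you propose. So your route is genuinely different (re-prove Claim \ref{claim2} from scratch under the weaker hypothesis) but it is the harder route, and the two steps on which it would have to succeed -- ruling out opposite directions, and running the propagation past the sparse set of monochromatic-triangle indices down to $|W_2(C)|=2$ -- are exactly the ones left blank.
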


\begin{proof}
First, we can assert that all vertices in $W_2$ following the colors of $C$ in the same direction. Suppose the contrary holds.
Then, from Claim \ref{claim1}
we have $c(w_1v_i)=c(w_1v_{i+2})$ for all indices $i$ with $1\leq i\leq \ell$. Since $(G,c)$ has no joint monochromatic triangles,
there exists at most one index $k_i$ for $w_i\in W_2(C)$
such that $c(w_iv_{k_i})=c(v_{k_i}v_{{k_i}+2})$ with $1\leq k_i\leq \ell$. Hence, $v_\ell w_1v_1v_3w_2v_4\overrightarrow{C}v_\ell$, $v_1w_1v_2v_4w_5v_4\overrightarrow{C}v_1$
and $v_{\ell-2}w_1v_{\ell-1}v_1w_2v_2\overrightarrow{C}v_{\ell-2}$ are cycles of length $\ell+1$ containing $v_1$.
Then, we can easily verify that at least one of them is proper, a contradiction. The assertion follows.
Thus, there are no monochromatic triangles containing a vertex in $W_2(C)$ and two vertices in $V(C)$.
Then, from Claim \ref{claim2} we can get the result.
\end{proof}

In the following, we consider the relation between $W_2(C)$ and $W_3(C)$. According to Lemma \ref{lemma1},
we define some new vertex sets.
Let $R_C(w)=V(v_{y(w)}\overleftarrow{C}v_{r(w)})\subseteq P_C^1(w)$
such that $v_{y(w)+1}v_{y(w)}\ldots v_{r(w)}$
is a longest rainbow subpath of $v_{y(w)+1}v_{y(w)}\ldots v_{x(w)}$.
In a similar way, we define $Q_C(w)=V(v_{y(w)+1}\overrightarrow{C}v_{q(w)})\subseteq P_C^2(w)$.
For convenience, we relabel the vertices of $C$ depending on $w\in W_3(C)$
on a clockwise direction
by $u_1u_2\cdots u_\ell$
such that $u_1=v_{y(w)}$, $u_b=v_{q(w)}$, $u_t=v_{z(w)}$,
$u_{s}=v_{x(w)}$ and $u_a=v_{r(w)}$.

\begin{lemma}\label{lemma3}
Suppose there is no proper cycle of length $\ell+1$ containing $v_1$ in $(G,c)$. If $W_2(C)\neq \emptyset $, then when $|W_3(C)|\geq 2$,
$C$ is an even cycle with two colors appearing alternatively on $C$.
Furthermore, $|W_2(C)|\leq 1$.
\end{lemma}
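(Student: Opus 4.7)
The plan is to split the proof of Lemma~\ref{lemma3} into its two conclusions and, for each, build length-$(\ell+1)$ cycles through $v_1$ whose non-properness (by hypothesis) extracts colour identities. Fix $w \in W_2(C)$ and, without loss of generality, assume $w$ follows $C$ increasingly, so $c(wv_i) = a_i := c(v_iv_{i+1})$. Take distinct $w_1, w_2 \in W_3(C)$ and apply Lemma~\ref{lemma1} to each $w_k$, obtaining the three-part partition $P_C^1(w_k), P_C^2(w_k), P_C^3(w_k)$ and the monochromatic triangle $w_k v_{y_k} v_{y_k+1}$ of colour $c_{w_k} = a_{y_k}$; the no-joint-monochromatic-triangles hypothesis forces $y_1 \ne y_2$. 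The central construction is the length-$(\ell+1)$ cycle
$$
C_{k,j} \;:=\; v_1 v_2 \cdots v_{j-1}\, w\, w_k\, v_{j+1} v_{j+2} \cdots v_\ell\, v_1 \qquad (j \ne 1),
$$
obtained from $C$ by deleting $v_j$ and splicing in the path $w w_k$; it contains $v_1$. A short check using Lemma~\ref{lemma1} shows that, whenever $v_{j+1} \in P_C^2(w_k)$, the cycle $C_{k,j}$ is proper if and only if $c(ww_k) \notin \{a_{j-1}, a_j\}$, so the no-proper-$(\ell+1)$-cycle hypothesis forces $c(ww_k) \in \{a_{j-1}, a_j\}$ for every such $j$.

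The option $c(ww_k) = a_{y_k}$ is ruled out: otherwise $\{w, w_k, v_{y_k}\}$ would be a monochromatic triangle sharing the edge $w_k v_{y_k}$ with $w_k v_{y_k} v_{y_k+1}$, producing joint monochromatic triangles. Eliminating this and iterating the constraint over $j \in \{y_k, y_k+1, \ldots, z_k - 1\}$, together with the $C$-properness inequalities $a_{j-1} \ne a_j$, forces the chain $a_{y_k - 1} = a_{y_k + 1} = a_{y_k + 3} = \cdots$ along the indices of $P_C^2(w_k)$. Running the analogous construction for $w_2$, and using reverse-orientation variants of $C_{k,j}$ to supply chains of the opposite parity, one splices these local chains together — exploiting $y_1 \ne y_2$ and the edge-disjointness of the two triangles — into the global relation $a_i = a_{i+2}$ for every $i$. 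Combined with $a_i \ne a_{i+1}$, this forces $\ell$ to be even and $C$ to use exactly two colours alternately, establishing the first assertion.

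For the second assertion, suppose for contradiction that $w, w' \in W_2(C)$ are distinct. If $|W_3(C)| \ge 2$, the first assertion gives $C$ alternately two-coloured by some $\{\alpha, \beta\}$, so $c(wv_i) = a_i \in \{\alpha, \beta\}$ takes only two values, contradicting the pairwise distinctness of the $\ell \ge 3$ values $c(wv_i)$ guaranteed by Lemma~\ref{lemma2}(1). If $|W_3(C)| \le 1$, Lemma~\ref{lemma2}(2) gives $V(C)$ the $DP_w$, and Fact~\ref{fact1} produces $v_a \in V(C)$ with $d^c_{V(C)}(v_a) \le (\ell+1)/2$. A careful count of the colours at $v_a$ from outside $V(C)$ — each $u \in W_1(C)$ contributes only the single colour of $\partial(u, V(C))$, $|W_3(C)| \le 1$, and the $DP$-structure restricts the contribution from $W_2(C)$ — combined with $\delta^c(G) \ge (n+1)/2$, is meant to push $d^c(v_a) < (n+1)/2$, yielding the required contradiction.

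The principal obstacle is the global propagation step in the first assertion: the cycles $C_{k,j}$ only give a chain of identities along $P_C^2(w_k)$'s odd-offset indices, so weaving the $w_1$- and $w_2$-chains together with their reverse-orientation counterparts to cover every index of $C$ requires careful case analysis on the parities and relative positions of $y_1$ and $y_2$, and may need additional cycle constructions that traverse both $w_1$ and $w_2$. A secondary delicate point is the colour-counting in the $|W_3(C)| \le 1$ sub-case of the second assertion, where the structural input from $W_1(C)$ must be extracted sharply enough to beat the $\delta^c(G)$ bound.
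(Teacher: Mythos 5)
Your overall strategy---extracting colour identities from non-proper $(\ell+1)$-cycles---is the right one, and your central computation is sound: $C_{k,j}$ being non-proper does force $c(ww_k)\in\{a_{j-1},a_j\}$, and $c(ww_k)=c_{w_k}$ is correctly excluded because $c(wv_{y_k})=a_{y_k}=c_{w_k}$ would make $ww_kv_{y_k}$ a monochromatic triangle joint with $w_kv_{y_k}v_{y_k+1}$. But the step you yourself flag as ``the principal obstacle'' is a genuine, fatal gap rather than a technicality. The chains $a_{y_k-1}=a_{y_k+1}=a_{y_k+3}=\cdots$ constrain only one parity class of edges, and only inside $P_C^2(w_k)$ (resp.\ $P_C^1(w_k)$ for the reversed variants); with just two vertices of $W_3(C)$ in hand, the region $P_C^3(w_1)\cap P_C^3(w_2)$ can be a long stretch of $C$ touched by no chain at all, so no parity bookkeeping will splice your local chains into the global relation $a_i=a_{i+2}$. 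The paper closes exactly this hole by a different mechanism: it picks one $w\in W_3(C)$ with $v_{y(w)}\neq v_1$ (possible since the monochromatic triangles of distinct $W_3$-vertices are vertex-disjoint), and builds two $(\ell+1)$-cycles in which $w'$ and $w$ are \emph{not} consecutive, namely $u_{\ell-1}w'u_\ell w u_2\overrightarrow{C}u_{\ell-1}$ and $u_\ell w'u_2wu_3\overrightarrow{C}u_\ell$, both obtained by deleting $u_1=v_{y(w)}$. These force first $c(wu_\ell)=c(wu_2)$ (whence $u_\ell\notin P_C^1(w)$ and $|P_C^2(w)|\geq 2$) and then a single coincidence $c(w'u_2)=c(w'u_\ell)$ \emph{at the follower vertex} $w'$. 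Because $w'$ follows the colours of $C$, one such coincidence propagates around the entire cycle (this is where Proposition \ref{pro1} does the work), giving $a_j=a_{j-2}$ for every $j$ in one stroke. That propagation-through-a-follower idea is what your proposal is missing, and without it the first assertion is not proved.

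Two smaller points. First, the no-joint-triangle hypothesis gives you more than $y_1\neq y_2$: the two triangles must be vertex-disjoint, which is what the paper uses to find $w$ with $v_{y(w)}\neq v_1$. Second, your ``$|W_3(C)|\leq 1$'' branch of the second assertion is both unnecessary and unfinished: the conclusion $|W_2(C)|\leq 1$ is asserted under the standing hypothesis $|W_3(C)|\geq 2$, so it follows at once from the alternation together with Lemma \ref{lemma2}(1), exactly as in your first branch; the situation of $|W_2(C)|\geq 2$ with few $W_3$-vertices is handled separately later, in Proposition \ref{p1}(2), by the colour-degree count you only gesture at.
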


\begin{proof}
Since $|W_3(C)|\geq 2$, there is a vertex $w\in W_3$ such that $u_1\neq v_1$. Without loss of generality, suppose $w'\in W_2(C)$ follows the colors of $C$ increasingly. To avoid $u_{\ell-1}w'u_\ell wu_2\overrightarrow{C}u_{\ell-1}$ being a requested cycle,
we have $c(wu_\ell)\in \{c(wu_2),c(w'u_\ell)\}$.
If $c(wu_\ell)=c(w'u_\ell)$, then $c(ww')\notin \{c(wu_2),c(w'u_\ell)\}$.
Hence, $u_\ell w'wu_2\overrightarrow{C}u_\ell$ is a requested cycle.
Thus, $c(wu_\ell)=c(wu_2)$. Then, $u_\ell\notin P_C^1(w)$.
Since $w\notin W_1$, we have $|P_C^2(w)|\geq 2$, that is, $u_3\in P_C^2(w)$.
To avoid $u_\ell w'u_2 wu_3\overrightarrow{C}u_\ell$ being a requested cycle, we have $c(w'u_2)=c(w'u_\ell)$. Thus, $C$ is an even cycle with two colors appearing alternatively on $C$. According to Lemma \ref{lemma2},
we know that $|W_2(C)|\leq 1$.
\end{proof}

\begin{pro}\label{p1}
Let $(G,c)$ be an edge-colored complete graph on $n\geq 3$ vertices such that $\delta^c(G)\geq \frac{n+1}{2}$, and not contain any joint monochromatic triangles.

(1) \ If $W_1(C)\neq \emptyset$, then $(G,c)$ is properly vertex-pancyclic.

(2) \ If $|W_2(C)|\geq 2$, then $(G,c)$ is properly vertex-pancyclic.

(3) \ If $|W_2(C)|=1$ and $W_3(C)=\emptyset$, then $(G,c)$ is properly vertex-pancyclic.
\end{pro}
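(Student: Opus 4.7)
For all three parts I would assume for contradiction that no proper cycle of length $\ell+1$ contains $v_1$, and then argue case by case using the decomposition $V(G)\setminus V(C)=W_1(C)\cup W_2(C)\cup W_3(C)$ together with Lemmas~\ref{lemma1}--\ref{lemma3} and Fact~\ref{fact1}.

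For part (1), pick $w\in W_1(C)$ and let $c_0$ be the common color of all edges in $\partial(w,V(C))$. Any monochromatic triangle through $w$ in $G[V(C)\cup\{w\}]$ corresponds to a $c_0$-colored edge of $G[V(C)]$, and since any two such triangles share $w$ they would be joint; the hypothesis therefore forces at most one edge of $G[V(C)]$ to carry color $c_0$. The color-degree bound $d^c(w)\geq\frac{n+1}{2}$ combined with the fact that $w$ spends only the single color $c_0$ on its $\ell$ edges into $V(C)$ yields $n-\ell-1\geq\frac{n-1}{2}$, so $\ell\leq\frac{n-1}{2}$ and the external pool is large. From this abundance and the color degree of $v_1$, I would locate a second external vertex $u\ne w$ and an edge $v_iv_{i+1}$ of $C$ (with $v_1$ on the path $v_{i+1}\overrightarrow{C}v_i$) such that $c(v_iu)\notin\{c(v_{i-1}v_i),c(uv_{i+1})\}$ and $c(uv_{i+1})\neq c(v_{i+1}v_{i+2})$; inserting $u$ then produces the forbidden proper $(\ell+1)$-cycle through $v_1$.

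For part (2), Lemma~\ref{lemma2} gives that $V(C)$ has $DP_w$ for every $w\in W_2(C)$. Applying Fact~\ref{fact1} with $A=V(C)$ extracts a vertex $v_a\in V(C)$ with $d^c_{V(C)}(v_a)\leq\frac{\ell+1}{2}$ and one of whose colors inside $V(C)$ equals $c(wv_a)$. Combining this with $d^c(v_a)\geq\frac{n+1}{2}$ forces at least $\frac{n-\ell}{2}$ colors to appear on edges from $v_a$ to $V(G)\setminus V(C)$ that do not occur on edges of $v_a$ inside $V(C)$; in particular there is an external vertex $u$ such that $c(v_au)\notin\{c(v_{a-1}v_a),c(v_av_{a+1})\}$. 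Using the dependence property to force $c(uv_{a\pm 1})$ into a compatible value (and, if one of the two choices fails, switching to a second such $u$ guaranteed by the color count), I would then replace either $v_{a-1}v_a$ or $v_av_{a+1}$ by a length-$2$ path through $u$, producing the desired proper $(\ell+1)$-cycle through $v_1$.

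For part (3), if $W_1(C)\neq\emptyset$ then (1) already gives the conclusion, so I may assume $V(G)\setminus V(C)=\{w\}$ with $w\in W_2(C)$ and $n=\ell+1$, in which case a proper $(\ell+1)$-cycle through $v_1$ is a proper Hamiltonian cycle. Without loss of generality $w$ follows $C$ increasingly, so $c(wv_i)=c(v_iv_{i+1})$ for every $i$, and the straight insertion of $w$ between $v_i$ and $v_{i+1}$ fails because $c(wv_{i+1})=c(v_{i+1}v_{i+2})$. Hence any candidate Hamiltonian cycle must use at least one chord $v_jv_k$ of $C$ together with the two edges incident to $w$; using $d^c(v_1)\geq\frac{n+1}{2}$ to obtain enough chord colors at $v_1$, and invoking the no-joint-monochromatic-triangles hypothesis to eliminate the residual color coincidences, I would construct the required cycle. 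The main obstacle, I expect, is part (1): coordinating the $c_0$-chord restriction with the color-clash conditions at both ends of the insertion into a single concrete choice of $u$ and $i$ is delicate, whereas part (2) is essentially bookkeeping once Fact~\ref{fact1} has been applied, and part (3) is tightly constrained by $n=\ell+1$.
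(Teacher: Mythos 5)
There is a genuine gap in all three parts: your proposal repeatedly pivots from a partial count to an unconstructed insertion argument, whereas the paper's proof of every part of this proposition is a color-degree contradiction, i.e., it exhibits a specific vertex whose color degree is forced below $\frac{n+1}{2}$. In part (1) your bound $\ell\le\frac{n-1}{2}$ is correct but is only the first step; the paper instead bounds $d^c(w)$ for $w\in W_1(C)$ itself, by first showing (via the non-existence of a proper $(\ell+1)$-cycle) that every edge from $w$ to $W_2(C)$ also carries the color $c(w,V(C))$, that $W_1(C)$ has the $DP_{v_1}$ so Fact~\ref{fact1} caps the colors $w$ sees inside $W_1(C)$ at $\frac{|W_1(C)|+1}{2}$, and that $|W_3(C)|\le\frac{\ell}{2}$; summing gives $d^c(w)<\frac{n+1}{2}$. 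Your alternative --- ``locate a second external vertex $u$ and an edge $v_iv_{i+1}$'' admitting a proper insertion --- is exactly what cannot be assumed: external vertices are either followers (for which every straight insertion fails by definition) or $W_3$-vertices whose colorings are pinned down by Lemma~\ref{lemma1} precisely so that insertions fail, and the ``abundance'' of external vertices does not by itself produce an insertable one. The entire remainder of the paper (Lemmas~\ref{lemma4}--\ref{lemma13}, Propositions~\ref{p2}--\ref{p5}) exists because such insertions need not exist.

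In part (2), the step ``using the dependence property to force $c(uv_{a\pm1})$ into a compatible value'' misapplies $DP_w$: the dependence property constrains colors of chords $v_iv_j$ with both ends on $C$ relative to $w\in W_2(C)$, and says nothing about edges from an external vertex $u$ to $V(C)$; moreover a follower $u\in W_2(C)$ satisfies $c(uv_a)\in\{c(v_av_{a+1}),c(v_av_{a-1})\}$, so followers contribute no fresh colors at $v_a$ and your ``at least $\frac{n-\ell}{2}$ new colors'' claim needs the bound $|W_3(C)|\le 1$ from Lemma~\ref{lemma3}, which you never invoke. The paper finishes (2) by a count, not an insertion. Part (3) is where the real work lies and your sketch does not engage with it: the paper's argument analyzes the unique monochromatic triangle $wv_xv_y$, shows all pairs of $I_w$ are translates at a fixed distance $d_{a,b}$, and then runs two separate degree counts (one on a $DP_w$-subset $V_1$ of $V(C)$, one via a coloring orientation of $G-w$) to reach a contradiction; ``obtain enough chord colors at $v_1$ and eliminate the residual coincidences'' is a restatement of the goal, not a proof.
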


\begin{proof}
Suppose, to the contrary, that there exists a vertex $v$ which is contained in a proper $\ell$-cycle $C$ in $(G,c)$ for some $\ell$ with $4\leq \ell \leq n-1$, but no proper cycle of length $\ell+1$ in $(G,c)$ contains vertex $v$.
\vspace{2mm}

(1) \ Let $w\in W_1(C)$ and $w'\in W_2(C)$.
If $c(ww')\neq c(w,V(C))$, then at least one of $v_1w_2w_1v_3\overrightarrow{C}v_1$,
$v_2w_2w_1v_4\overrightarrow{C}v_2$ and $v_3w_2w_1v_5\overrightarrow{C}v_3$ is a proper
cycle of length $\ell+1$ containing $v_1$.
Thus, $c(ww')= c(w,V(C))$, that is $c(w, W_2(C))=c(w,V(C))$.

If $|W_1(C)|=1$, then for $w\in W_1(C)$, we have
$d^c(w)\leq 1+|W_3|\leq 1+\frac{\ell}{2}=\frac{\ell+2}{2}<\frac{n+1}{2}$, a contradiction.
If $|W_1(C)|\geq 2$, then we can assert has $W_1(C)$ has the $DP_{v_1}$.
Since $(G,c)$ has no joint monochromatic triangles,
there is at most one edge on $C$ colored by $c(w,V(C))$.
Suppose, to the contrary, that $W_1(C)$ has no $DP_{v_1}$.
Then, there are two vertices $w_1,w_2\in W_1$
such that $c(w_1w_2)\notin \{c(v_1w_1),c(v_1w_2)\}$.
Then, at least one of $v_1w_1w_2v_3\overrightarrow{C}v_1$
and $v_1w_2w_1v_3\overrightarrow{C}v_1$
is a requested cycle, a contradiction.
According Fact 1, there exists a vertex $w\in W_1(C)$ such that $d_{W_1(C)}^c(w)\leq \frac{|W_1(C)|+1}{2}$,
and at least one of the colors used in $W_1$ at $w$ is $c(wv_1)$.
Hence, $d^c(w)\leq \frac{|W_1(C)|+1}{2}+|W_3(C)|\leq \frac{|W_1(C)|+1}{2}+\frac{\ell}{2}<\frac{n+1}{2}$,
a contradiction.

\vspace{2mm}

In the following, we might as well suppose $W_1(C)=\emptyset$.

\vspace{2mm}

(2) \ Since $|W_2(C)|\geq 2$, from Lemma \ref{lemma2}, $V(C)$ has the  $DP_w$ for $w\in W_2(C)$. If $W_3(C)= \emptyset$, the result follows.
If $W_3(C)\neq \emptyset$, then from Lemma \ref{lemma3} we have $|W_3(C)|\leq 1$. Thus, there is a vertex $v\in V(C)$
such that $d^c(v)\leq  \frac{|W_2(C)+1|}{2}+1<\frac{n+1}{2}$.

\vspace{2mm}

(3) \ According to the proof of Case 3 of Theorem \ref{th1},
$(G,c)$ has a monochromatic triangle containing $w$.
Otherwise, $V(C)$ has the $DP_w$.
Then, there is a vertex $v\in V(C)$ such that $d^c(v)< \frac{n+1}{2}$.
Let $v_a$ and $v_b$ be two distinct vertices on $C$ such that
$c(v_a,v_b)\in I_w$ (suppose may as well $v_a$ is in front of $v_b$ on $C$). If $wv_iv_{i+d_{a,b}}$ is not monochromatic for any $1\leq i\leq \ell$, then $c(wv_{a-k})=c(wv_{b-k})$ for $k=0,1, \ldots, \ell-1$.
Consequently, $c(wv_{x})= c(wv_{x+kd_{a,b}})$ for every vertex $v_x\in V(C)$ and for every positive integer $k$ (except 1 when $v_x=v_a$).
Furthermore, let $(v_a,v_b)$ be such a vertex pair in $I_w$
that $d_{a,b}= \mbox{min}~ \{d_{i,j}~|~(d_i,d_j)\in I_w\}$,
then $\ell \equiv 0~ (\mbox{mod} ~d_{a,b})$.

Let $wv_xv_y$ be a unique monochromatic triangle containing $w$
(we may as well suppose $v_x$ is in front of $v_y$ on $C$).
Then, we assert $d_{x,y}=d_{a,b}$. Suppose, the contrary holds.
Then, we can get that $c(wv_{i})= c(wv_{i+kd_{a,b}})$
for every vertex $v_i\in V(C)$ and for every positive integer $k$
(except 1 when $v_i=v_a$). Thus, $c(wv_{x})= c(wv_{x+kd_{a,b}})$ for every positive integer $k$. Then, $d_{x,y}=qd_{a,b}$ where $q=2,\cdots,\frac{n-1}{d_{a,b}}$. Otherwise, there is an integer $p$ such that $d_{x+pd_{{a,b},y}}<d_{a,b}$.
Since $c(wv_{x+pd_{a,b}})=c(wv_{x})=c(wv_{y})$,
we have $(v_{x+pd_{a,b}},v_y)\in I_w$,
which contradicts the choice of $(v_a,v_b)$.
Therefore, $\frac{n-1}{d_{a,b}}\geq 3$.
Hence, $d^c(w)\leq 1+d_{a,b}\leq 1+\frac{n-1}{3}<\frac{n+1}{2}$, a contradiction.

For convenience, we relabel the vertices of $C$
by $s_1s_2\cdots s_\ell$ on a clockwise direction
such that $s_1=s_{x}$ and $s_{1+d_{a,b}}=v_y$.
Let $s_q $ be such a vertex that $(s_q,s_{q+d_{a,b}})\in I_w$
while $(s_{q+1},s_{q+d_{a,b}+1})\in D_w$.
Then, from Proposition \ref{pro3}
we have $(s_{i},s_{i+d_{a,b} })\in D_w$ for $q+1<i\leq \ell$
and $c(ws_i)=c(ws_{i+d_{a,b}})$ for $1\leq i<q$.
Since $(G,c)$ has no joint monochromatic,
$\{(s_i,s_{i+d_{a,b}}),1<i<q\}\subseteq I_w$.
Suppose that there is such a vertex pair $(s_f,s_g)$ in $I_w$
while not in $\{(s_i,s_{i+d_{a,b}}), 1<i<q\}$
(we may as well suppose $s_f$ is in front of $s_g$).
Then, according to the above content,
we have $d_{g,f}> d_{a,b}$.
Thus, $c(ws_{i})= c(ws_{i+kd_{g,f}})$
for every vertex $s_i\in V(C)$ and for every positive integer $k$
(except 1 when $s_i=s_f$).
Then, $\mathcal{C}(w,C)=\{c(ws_i)~|~s_i\in V(s_1\overrightarrow{C}s_{1+d_{f,g}})\}$.
Since $c(ws_{1})= c(ws_{1+d_{a,b}})$,
$d^c(w)\leq 1+d_{f,g}-1<\frac{n+1}{2}$.
Thus, $\{(s_i,s_{i+d_{a,b}}),1<i<q\}= I_w$.

Let $V_1=V(s_{q+d_{a,b}}\overrightarrow{C}s_\ell)$ and $V_2=V(C)\setminus V_1$. Since $ V_1\times V_1 \cap I_w=\emptyset$, $V_1$ has the $DP_w$.
Thus, there is a vertex $s\in V_1$
such that $d_{V_1}^c(s)\leq \frac{|V_1|+1}{2}=\frac{n-q-d_{a,b}+1}{2}$.
Since $(s, V_2) \cap I_w=\emptyset$,
$c(ss_i)\in \{c(ws),c(ws_i)\}$ for $s_i\in V_2$.
Thus, $\mathcal{C}(s,V_2)\subseteq \mathcal{C}(w,V_2)\cup \{c(ws)\}$.
Hence, $d^c(s)\leq \frac{n-q-d_{a,b}+1}{2}+d_{a,b}$.
Apparently, $q<d_{a,b}$.
Now we give a coloring orientation for $(G-w,c)$.
Apparently, the edge set which is oriented arbitrarily is $\{s_is_{i+d_{a,b}},2\leq i \leq q\}$.
Thus, $D(G-w)$ has at most $\frac{(n-1)(n-2)}{2}+q-1$ arcs.
Therefore,
\begin{align*}
d^c(G-w) &\leq  d^-(D(G-w))+1 \\
 &\leq \frac{n-2}{2}+\frac{q-1}{n-1}+1\\
 &<\frac{n}{2} +\frac{d_{a,b}-1}{n-1}\\
 &<\frac{n+1}{2}.
\end{align*}
There exists a vertex with color degree less than $\frac{n+1}{2}$.
\end{proof}

Note that there is a special class of vertices $w_i$ in $W_3(C)$
such that $|P_C^1(w_i)|=|P_C^2(w_i)|=1$.
By repeating the proof procedure of Proposition \ref{p1},
we can get Proposition \ref{p2}.

\begin{pro}\label{p2}
Let $(G,c)$ be an edge-colored complete graph on $n\geq 3$ vertices and with no  joint monochromatic triangles such that $\delta^c(G)\geq \frac{n+1}{2}$.
If there exist such vertices $w_i\in W_3(C)$
that $|P_C^1(w_i)|=|P_C^2(w_i)|=1$ ,
then $(G,c)$ is properly vertex-pancyclic.
\end{pro}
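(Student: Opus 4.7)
The plan is to adapt the case analysis in the proof of Proposition \ref{p1} to the special vertex $w\in W_3(C)$ with $|P_C^1(w)|=|P_C^2(w)|=1$. For such a $w$, Lemma \ref{lemma1} forces the edges from $w$ to $V(C)$ to use only two colors: the color $c_w$ on the two edges of the monochromatic triangle $wv_{y(w)}v_{y(w)+1}$, and a second color $c'_w\ne c_w$ on every edge from $w$ to a vertex in $P_C^3(w)=V(C)\setminus\{v_{y(w)},v_{y(w)+1}\}$. Thus $w$ behaves essentially like an element of $W_1(C)$ (which would have only one color to $V(C)$), and the plan is to replay the short color-degree counting argument used in part $(1)$ of Proposition \ref{p1}, with one extra color accounted for.

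Suppose, for contradiction, that $v_1$ lies on a proper $\ell$-cycle $C$ with $4\leq \ell\leq n-1$ but on no proper $(\ell+1)$-cycle through $v_1$. If $W_1(C)\ne\emptyset$, or $|W_2(C)|\geq 2$, or $|W_2(C)|=1$ with $W_3(C)=\emptyset$, then Proposition \ref{p1} finishes the proof directly. So I may assume $W_1(C)=\emptyset$, $|W_2(C)|\leq 1$, and $w\in W_3(C)$ as above.

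The main step is to prove that for every $w^{*}\in W_2(C)\cup(W_3(C)\setminus\{w\})$ the edge $ww^{*}$ receives a color from $\{c_w,c'_w\}$ when $w^{*}\in W_2(C)$, and contributes at most one ``new'' color when $w^{*}\in W_3(C)\setminus\{w\}$. This is done by producing, whenever the alternative holds, a proper $(\ell+1)$-cycle through $v_1$: delete a suitable $v_i\ne v_1$ from $C$ and replace the resulting gap by the path $v_{i-1}X_1X_2v_{i+1}$ with $\{X_1,X_2\}=\{w,w^{*}\}$, where the orientation is chosen according to whether $w^{*}$ follows $C$ increasingly or decreasingly (in case $w^{*}\in W_2(C)$). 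For $w^{*}\in W_2(C)$, Claims \ref{claim1} and \ref{claim2} together with Propositions \ref{pro1}--\ref{pro3} supply the required color control on $C$; for $w^{*}\in W_3(C)\setminus\{w\}$, Lemma \ref{lemma1} applied to $w^{*}$, together with the no-joint-monochromatic-triangles hypothesis (which in particular forces the monochromatic triangles of $w$ and $w^{*}$ to be vertex-disjoint and thereby yields $|W_3(C)|\leq \ell/2$), plays the analogous role.

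Once this is established, counting contributions to $d^c(w)$ gives at most $2$ colors from $V(C)$, no new color from $W_2(C)$, and at most $|W_3(C)|-1$ new colors from $W_3(C)\setminus\{w\}$, so
\[
d^c(w)\leq 2+(|W_3(C)|-1)\leq 1+\frac{\ell}{2}\leq \frac{n}{2}<\frac{n+1}{2}
\]
whenever $\ell\leq n-2$, contradicting $\delta^c(G)\geq \frac{n+1}{2}$. The remaining case $\ell=n-1$ forces $V(G)\setminus V(C)=\{w\}$, whence $d^c(w)\leq 2<\frac{n+1}{2}$ for $n\geq 4$. The main obstacle will be the cycle-construction step: because the monochromatic triangle of $w$ occupies two consecutive vertices of $C$, certain indices $i$ are forbidden for the deletion-insertion trick (they either destroy properness at the insertion endpoints or would omit $v_1$), and one has to verify that enough admissible indices remain. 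This bookkeeping is where the bulk of the effort in ``repeating the proof procedure of Proposition \ref{p1}'' goes, particularly in the corner case $|W_2(C)|=1$ that parallels case $(3)$ of Proposition \ref{p1}.
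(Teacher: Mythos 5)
Your proposal is correct and follows essentially the same route as the paper, whose entire proof of Proposition \ref{p2} is the remark that one should ``repeat the proof procedure of Proposition \ref{p1}'': you correctly identify the key structural point that a vertex $w\in W_3(C)$ with $|P_C^1(w)|=|P_C^2(w)|=1$ sends only two colors into $V(C)$ (namely $c_w$ on the two triangle edges and one further color on all of $P_C^3(w)$) and can therefore be handled like a $W_1(C)$-vertex with one extra color, after which the same insertion arguments and the count $d^c(w)\le 1+|W_3(C)|\le 1+\frac{\ell}{2}$ (with $|W_3(C)|\le \frac{\ell}{2}$ coming from the vertex-disjointness of the monochromatic triangles) yield the contradiction, the case $\ell=n-1$ being disposed of separately exactly as you do. The bookkeeping you defer in the cycle-construction step is not supplied by the paper either, so this is not a divergence from its argument.
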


Hence, in the following we suppose that
either $|P_C^1(w)|$ or $|P_C^2(w)|$
is larger than 1 for each $w\in W_3$.

\begin{lemma}\label{lemma4}
Suppose there is no proper cycle of length $\ell+1$ containing $v_1$ in $(G,c)$. If $|W_2(C)|\leq 1$, then $|\mathcal{C}(w,C)|\geq 3$ for $w\in W_3(C).$
\end{lemma}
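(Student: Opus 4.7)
I plan to argue by contradiction. Suppose some $w\in W_3(C)$ satisfies $|\mathcal{C}(w,C)|\le 2$. The case $|\mathcal{C}(w,C)|=1$ is immediate: then every edge in $\partial(w,V(C))$ carries the same colour, so $w\in W_1(C)$, contradicting $w\in W_3(C)$. Hence I may assume $|\mathcal{C}(w,C)|=2$, and denote the two colours by $c_w$ (the colour of the monochromatic triangle $wv_{y(w)}v_{y(w)+1}$ identified in Lemma~\ref{lemma1}) and $c'$.

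My first step is to make the colouring around $w$ fully explicit. By Lemma~\ref{lemma1}, $c(wv_i)=c(v_iv_{i+1})$ for $v_i\in P_C^1(w)$ and $c(wv_i)=c(v_iv_{i-1})$ for $v_i\in P_C^2(w)$, so each of the $z(w)-x(w)$ edges on the arc from $v_{x(w)}$ to $v_{z(w)}$ through $v_{y(w)}v_{y(w)+1}$ lies in $\{c_w,c'\}$. Since $C$ is proper, these edges must alternate strictly between $c_w$ and $c'$, with $c(v_{y(w)}v_{y(w)+1})=c_w$. All edges from $w$ into $P_C^3(w)$ share a single colour $c^*\in\{c_w,c'\}$. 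Because $(G,c)$ has no joint monochromatic triangles, $wv_{y(w)}v_{y(w)+1}$ is the only monochromatic triangle through $w$; hence for every pair $v_a,v_b\in V(C)$ with $c(wv_a)=c(wv_b)$ and $\{v_a,v_b\}\ne\{v_{y(w)},v_{y(w)+1}\}$, the chord colour $c(v_av_b)$ avoids that common value.

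My second step is to exhibit a proper cycle of length $\ell+1$ containing $v_1$, contradicting the standing assumption. Pure insertion of $w$ between two consecutive vertices of $C$ always fails on the front arc (because $c(wv_i)=c(v_iv_{i\pm1})$) and on $P_C^3(w)$ (because consecutive neighbours of $w$ share $c^*$), so instead I will replace a two-edge sub-path $v_av_bv_c$ of $C$ by a three-edge detour $v_a\,x\,w\,v_c$ with $x\in V(C)$ chosen close to $v_{y(w)}$ using one of the chords $v_{y(w)-1}v_{y(w)+1}$, $v_{y(w)-2}v_{y(w)}$ or $v_{y(w)+1}v_{y(w)+3}$. The strict $2$-alternation on the front arc together with the unique-monochromatic-triangle constraint restricts the colour of these chords enough that at least one such detour produces a proper $(\ell+1)$-cycle through $v_1$. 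Whenever the most direct detour is blocked, Proposition~\ref{pro3} converts the obstruction into an equality $c(wv_i)=c(wv_j)$ that propagates along $P_C^1(w)\cup\{v_{y(w)+1}\}$ (or $P_C^2(w)\cup\{v_{y(w)}\}$); under $|\mathcal{C}(w,C)|=2$, this propagation eventually exhibits an off-$C$ vertex that follows the colours of $C$, contradicting $|W_2(C)|\le 1$.

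The main obstacle is organising the case analysis according to: (i) whether $P_C^3(w)$ is empty; (ii) the value of $c^*\in\{c_w,c'\}$ when it is not; (iii) the parities of $|P_C^1(w)|$ and $|P_C^2(w)|$, which determine the colours at the boundaries $v_{x(w)}$ and $v_{z(w)}$; and (iv) which of $P_C^1(w),P_C^2(w),P_C^3(w)$ contains $v_1$. The hardest configuration is the one in which the single ``forbidden'' colour simultaneously blocks each candidate detour; this is precisely the configuration in which Proposition~\ref{pro3} forces a second follower vertex, so that the hypothesis $|W_2(C)|\le 1$ is what closes the argument.
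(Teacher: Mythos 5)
There is a genuine gap here, on two levels. First, your argument never invokes the hypothesis $\delta^c(G)\geq \frac{n+1}{2}$, which is the engine of the paper's proof. The paper's argument is a short color-degree count: since $|W_2(C)|\le 1$ and $|W_3(C)|\le \frac{\ell}{2}$ (from the no-joint-monochromatic-triangles condition), almost all of the at least $\frac{n+1}{2}$ colors seen at $w$ must already appear on $\partial(w,V(C))$; the only work needed is to show that the edge from $w$ to the possible vertex $w_1\in W_2(C)$ contributes no \emph{new} color, i.e.\ $c(w_1w)\in\mathcal{C}(w,C)$ (done by exhibiting one short detour cycle through $w_1$ and $w$, using Lemma~\ref{lemma3} to know that $C$ is $2$-colored alternating when $|W_3(C)|\ge 2$ and $W_2(C)\neq\emptyset$). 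This yields $d^c(w)\le |\mathcal{C}(w,C)|+|W_3(C)|-1$, and the numerics force $|\mathcal{C}(w,C)|\ge 3$. Your proposal instead tries to derive a contradiction purely structurally from $|\mathcal{C}(w,C)|=2$; without the degree condition there is no reason to expect such a configuration to be impossible, and you give no evidence that it is.

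Second, the structural route is not actually carried out: the decisive sentence of your ``second step'' merely asserts that the alternation and the unique-monochromatic-triangle constraint ``restrict the colour of these chords enough that at least one such detour produces a proper $(\ell+1)$-cycle,'' which is precisely the claim that needs proof; the four-way case analysis you outline is described but never performed. Moreover, the closing contradiction is logically off: exhibiting one off-$C$ vertex that follows the colors of $C$ does not contradict $|W_2(C)|\le 1$ (that hypothesis permits one follower); it would only be a contradiction if the vertex so exhibited were $w$ itself (contradicting $w\in W_3(C)$) or if you produced two distinct followers. As written, the proposal is a plan rather than a proof, and the plan points away from the argument that actually works.
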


\begin{proof}
The result follows when $|W_3(C)|=1$.
Suppose now $|W_3(C)|\geq 2$.
If $W_2\neq \emptyset$, then from Lemma \ref{lemma3},
we know that $C$ is an even cycle with two colors appearing alternatively on $C$. Let $w_1\in W_2$ and $w_2\in W_3$.
Without loss of generality, assume that $w_1$ follows the colors of $C$ increasingly. Then, we assert $c(w_1w_2)\in \mathcal{C}(w_2,C)$.
If $u_1\neq v_1$, then to avoid $u_lw_1w_2u_2\overrightarrow{C}u_1$
being a requested cycle, we have $c(w_1w_2)\in \{c(w_2u_2),c(w_1u_1)\}\subseteq \mathcal{C}(w_2,C)$.
If $u_1= v_1$, then to avoid $w_1u_4u_3u_5\overrightarrow{C}u_1 w_2w_1$
being a requested cycle, we have $c(w_1w_2)\in \{c(w_2u_4),c(w_1u_1)\}\subseteq \mathcal{C}(w_2,C)$.
Thus, $d^c(w)\leq |\mathcal{C}(w,C)|+|W_3(C)|-1$.
Then, $|\mathcal{C}(w,C)|\geq 3$.
\end{proof}

In the following we prove some lemmas to make the coloring structure of $(G[V(C)\cup\{w\}],c)$ clear for $w\in W_3(C)$.

\begin{lemma}\label{lemma5}
Suppose there is no proper cycle of length $\ell+1$ containing $v_1$ in $(G,c)$. For $w\in W_3(C)$, if $|P_C^3(w)|\geq 2$,
then $R_C(w) =P_C^1(w)$ and $Q_C(w) =P_C^2(w)$.
\end{lemma}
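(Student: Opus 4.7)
The plan is to prove $R_C(w)=P_C^1(w)$ by contradiction; the identity $Q_C(w)=P_C^2(w)$ then follows by the symmetric argument, obtained from reversing the orientation of $C$ and swapping the roles of $P_C^1(w)$ and $P_C^2(w)$. Because $c(wv_i)=c(v_iv_{i+1})$ for every $v_i\in P_C^1(w)$ and $c(wv_{y(w)+1})=c_w$, the edges of the path $v_{y(w)+1}v_{y(w)}\cdots v_{x(w)}$ are colored exactly by $c(wv_{y(w)}),c(wv_{y(w)-1}),\ldots,c(wv_{x(w)})$, and the assertion $R_C(w)=P_C^1(w)$ is equivalent to saying these $|P_C^1(w)|$ colors are pairwise distinct. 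So assume for contradiction that $c(wv_s)=c(wv_t)$ for some $x(w)\leq s<t\leq y(w)$.

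From this coincidence I would apply Proposition~\ref{pro1} to the proper sub-path $v_sv_{s+1}\cdots v_t$ of $C$: this sub-path lies entirely inside $P_C^1(w)$ and so avoids the unique monochromatic triangle $wv_{y(w)}v_{y(w)+1}$ in $G_C$, so Proposition~\ref{pro1} propagates the coincidence backward to $c(wv_{s-i})=c(wv_{t-i})$ as long as both indices remain in $P_C^1(w)$. Two sub-cases then arise. In Case~A, the propagation first reaches $t-i=y(w)$, so $c(wv_{s-i})=c_w$; in Case~B, the smaller index $s-i$ crosses the boundary $v_{x(w)}$ into $P_C^3(w)$ first, and Proposition~\ref{pro3} converts the arrangement into a $DP_w$ relation at the $P_C^1$--$P_C^3$ junction, which can then be pushed back to Case~A by one more round of propagation.

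Reduced to Case~A, I would construct an explicit proper $(\ell+1)$-cycle containing $v_1$. Writing $v_a:=v_{s-i}$ for the offending vertex, the no-joint-monochromatic-triangles hypothesis gives $c(v_av_{y(w)})\neq c_w$ and $c(v_av_{y(w)+1})\neq c_w$, since otherwise a second $c_w$-monochromatic triangle through $w$ would share an edge with $wv_{y(w)}v_{y(w)+1}$. The desired cycle is obtained by reversing the segment $v_av_{a+1}\cdots v_{y(w)}$ of $C$ along the chord $v_av_{y(w)+1}$ and then splicing $w$ back through a vertex of $P_C^3(w)$; the hypothesis $|P_C^3(w)|\geq 2$ guarantees two consecutive $P_C^3$-vertices, so the splice point can be chosen with its $w$-edge color $\gamma$ (which, by Lemma~\ref{lemma1} and the no-joint-triangle hypothesis, is distinct from both $c_w$ and the adjacent $C$-edge colors) separating the two junction colors. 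The main obstacle is this final construction: on $P_C^1(w)\cup P_C^2(w)$ every $w$-edge color matches an adjacent $C$-edge color, so any direct insertion of $w$ into $C$ produces a monochromatic adjacency, and only vertices of $P_C^3(w)$ supply a ``free'' color $\gamma$; having two of them is what allows the splice to close up properly, which is why $|P_C^3(w)|\geq 2$ is essential.
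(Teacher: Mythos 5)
Your overall strategy---argue by contradiction from a repeated color $c(wv_s)=c(wv_t)$ on $P_C^1(w)$, propagate the coincidence with Propositions \ref{pro1}/\ref{pro3}, and exploit two consecutive vertices of $P_C^3(w)$ to re-insert $w$ into a cycle of length $\ell+1$---is the same as the paper's. But two steps do not hold up. First, the case split is muddled: on $P_C^1(w)$ the vertex $w$ follows the colors of $C$ increasingly, so the propagation only runs in the decreasing direction; with $s<t$ the index $t-i$ moves \emph{away} from $y(w)$, so your ``Case A'' can only occur in the degenerate situation $t=y(w)$, and the generic situation is your Case B, in which $s-i$ exits into $P_C^3(w)$. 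The claim that Proposition \ref{pro3} then ``pushes Case B back to Case A by one more round of propagation'' is not justified and does not correspond to anything that actually happens; the paper never performs such a reduction and instead works directly at the $P_C^1$--$P_C^3$ junction.

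Second, and more seriously, the heart of the proof---an explicit proper $(\ell+1)$-cycle through $v_1$---is missing. You describe ``reversing the segment $v_a\cdots v_{y(w)}$ along the chord $v_av_{y(w)+1}$ and splicing $w$ back through a vertex of $P_C^3(w)$'' but never write the cycle down or check properness at its junctions. Knowing only that $c(v_av_{y(w)})\neq c_w$ and $c(v_av_{y(w)+1})\neq c_w$ is far from sufficient: properness requires each chord color to differ from the colors of \emph{both} neighbouring edges of the new cycle, and there is no freedom to ``choose'' the color $\gamma$, since by Lemma \ref{lemma1}(2) all of $P_C^3(w)$ receives a single color from $w$; whether that color separates the junction colors is precisely what must be proved. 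The paper does this concrete work: writing $u_s=v_{x(w)}$ and taking $u_i\in R_C(w)$ with $c(wu_s)=c(wu_i)$, it deduces $(u_s,u_i)\in I_w$ (a second monochromatic triangle at $w$ being forbidden), applies Proposition \ref{pro3} to get $c(wu_{s-1})=c(wu_{s-2})=c(wu_{i-1})\neq c(wu_{i-2})$, uses the non-properness of $wu_{i-2}\overleftarrow{C}u_{s-1}u_{i-1}\overrightarrow{C}u_{s-2}w$ to force $c(u_{s-1}u_{i-1})\in\{c(u_su_{s-1}),c(u_{i-1}u_i)\}$, and then verifies that $wu_{i-2}\overleftarrow{C}u_su_i\overrightarrow{C}u_{s-1}u_{i-1}w$ is proper. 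Without an analogous verified construction, your argument is a plan rather than a proof.
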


\begin{proof}
We prove Lemma \ref{lemma5} by contradiction.
Assume that there is a vertex $u_i\in R_C(w)$ such that $c(wu_s)=c(wu_i)$. Then, $(u_s,u_i)\in I_w$.
From Proposition \ref{pro3},
we have $c(wu_{s-1})=c(wu_{i-1})$.
Since $|P_C^3(w)|\geq 2$,
$c(wu_{{s}-2})=c(wu_{i-1})\neq c({w}u_{{i}-2})$.
Thus, $c(u_{s-1}u_{i-1})\in \{c(u_{{s}}u_{{s}-1}),c(u_{i-1}u_i)\}$;
otherwise,  $wu_{i-2}\overleftarrow{C}u_{{s}-1}u_{i-1}\overrightarrow{C}u_{{s}-2}w$
is a requested cycle.
Then, $c(u_{{s}-1}u_{i-1})\neq c(u_{{s}-2}u_{{s}-1})$.
Therefore, $wu_{i-2}\overleftarrow{C}u_{s} u_{i}\overrightarrow{C}u_{{s}-1}u_{i-1}w$
is a requested cycle, a contradiction.
\end{proof}

Now we define a cycle $C_{u_i}=u_{i-1}wu_{i+1}\overrightarrow{C}u_{i+1}$ where $u_i\in \{u_1,u_2\}$.
Taking an example of which $C_{u_1}$ is proper,
we can get a conclusion.
Suppose there is no proper cycle of length $\ell+1$ containing $v_1$ in $(G,c)$. If there is a vertex $u_i$ such that $c(u_1u_i)\neq c(u_iu_{i+1})$, then $c(u_1u_{i-1})\in\{c(u_1u_i),c(u_{i-1}u_{i-2})\}$.
Otherwise,  $wu_{2}\overrightarrow{C}u_{i-1}u_1u_{i}\overrightarrow{C}u_{\ell}w$
is a requested cycle.
Thus, $c(u_1u_{i-1})\neq c(u_{i}u_{i-1})$.
By repeating this proof procedure,
we can get that $c(u_1u_k)\in \{c(u_1u_{k+1}),c(u_{k-1}u_k)\}$
and $c(u_1u_k)\neq c(u_ku_{k+1})$
for $u_k\in V(u_{i-1}\overleftarrow{C}u_{2})$.
Notice that if $c(u_1u_j)=c(u_1u_i)$,
then $c(u_1u_{j-1})=c(u_1u_i)$,
and once there is a vertex $u_j$ such that
$c(u_1u_j)=c(u_ju_{j-1})$,
then $c(u_1u_k)=c(u_ku_{k-1})$ for $u_k\in V(u_{j}\overleftarrow{C}u_{2})$.
Consequently, $c(u_1u_k)\in \{c(u_1u_{i}), c(u_ku_{k-1})\}$ for $u_k\in V(u_{i-1}\overleftarrow{C}u_{2})$.
In a similar way, if there is a vertex $u_i$ such that $c(u_1u_i)\neq c(u_iu_{i-1})$,
then $c(u_1u_k)\in \{c(u_1u_{i}), c(u_ku_{k+1})\}$ for $u_k\in V(u_{i+1}\overrightarrow{C}u_{\ell})$.

\begin{lemma}\label{lemma6}
Suppose there is no proper cycle of length $\ell+1$ containing $v_1$ in $(G,c)$. For $w\in W_3(C)$, we have

(1) \ $R_C(w)\setminus\{u_a\}$ and $Q_C(w)\setminus \{u_{b}\}$ has the $DP_w$;

(2) \ $(R_C(w)\setminus \{{u_a}\},Q_C(w)\setminus\{u_{b}\})\subseteq D_w$;

(3) \ if neither $P_C^1(w)\setminus R_C(w)$ nor $P_C^2(w)\setminus Q_C(w)$ is empty, and then $(u_a,Q_C(w)\setminus \{u_2\})\cup(R_C(w)\setminus \{u_1\},u_{b})\subseteq D_w$.
\end{lemma}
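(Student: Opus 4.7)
I prove each of the three assertions by contradiction, under the standing hypothesis that no proper $(\ell{+}1)$-cycle of $(G,c)$ contains $v_1$. The decisive preliminary observation is that by the rainbow definition of $R_C(w)$ and Lemma~\ref{lemma1}(2), the set $\{c(wu_k):u_k\in R_C(w)\}$ equals the set of (pairwise distinct) edge colours of the rainbow subpath $u_2u_1u_\ell\cdots u_a$; symmetrically $\{c(wu_k):u_k\in Q_C(w)\}$ is pairwise distinct; and the two palettes overlap only in the colour $c(u_1u_2)=c(wu_1)=c(wu_2)$ coming from the monochromatic triangle $wu_1u_2$.

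For part~(1), suppose $(u_i,u_j)\in I_w$ for distinct $u_i,u_j\in R_C\setminus\{u_a\}$ and WLOG $u_j$ in front of $u_i$ on the proper path $P_C^1$; since $u_i,u_j\neq u_a$, both predecessors $u_{i-1},u_{j-1}$ still lie in $R_C$. Proposition~\ref{pro3} applied to $P_C^1$, on which $w$ follows increasingly, then gives $c(wu_{i-1})=c(wu_{j-1})$, which by the rainbow distinctness of $\{c(wu_k):u_k\in R_C\}$ forces $u_{i-1}=u_{j-1}$ and hence the contradiction $u_i=u_j$. The statement for $Q_C\setminus\{u_b\}$ follows by the symmetric application on $P_C^2$ via the decreasing version of Proposition~\ref{pro3}.

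For part~(2), assuming for contradiction that $(u_r,u_q)\in I_w$ with $u_r\in R_C\setminus\{u_a\}$ and $u_q\in Q_C\setminus\{u_b\}$, the plan is to exhibit a proper $(\ell{+}1)$-cycle through $v_1$ built on $V(C)\cup\{w\}$ and using the chord $u_ru_q$, contradicting the standing hypothesis. The construction combines $u_ru_q$ with carefully chosen sub-paths of $C$ and an insertion of $w$ at a compatible position; properness is then verified edge by edge, using the inequalities $c(u_ru_q)\notin\{c(wu_r),c(wu_q)\}$ from $(u_r,u_q)\in I_w$, the distinctness of the two rainbow palettes (which separates the two $w$-edge colours), and the $DP_w$ conclusions of part~(1) inside $R_C\setminus\{u_a\}$ and $Q_C\setminus\{u_b\}$ to control any auxiliary chord colour that may appear. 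Part~(3) runs the same construction as in part~(2), only now with $u_a$ or $u_b$ playing the role of $u_r$ or $u_q$; the additional hypothesis $P_C^1\setminus R_C\neq\emptyset$ and $P_C^2\setminus Q_C\neq\emptyset$ supplies vertices $u_{a-1}$ and $u_{b+1}$ just outside the rainbow paths, and by maximality of $R_C,Q_C$ the colours $c(u_{a-1}u_a)$ and $c(u_bu_{b+1})$ repeat colours of the $w$-rainbow palette; this colour repeat replaces the strict rainbow distinctness used in part~(1) at the interior, and so the cycle construction still yields a proper $(\ell{+}1)$-cycle through $v_1$.

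The expected main obstacle is carrying out the cycle construction in part~(2): because of the rigid ``following'' rules $c(wu_i)=c(u_iu_{i+1})$ on $P_C^1$ and $c(wu_i)=c(u_iu_{i-1})$ on $P_C^2$, together with the mandatory monochromatic triangle $wu_1u_2$, every simple insertion of $w$ into $C$ fails, so the chord $u_ru_q$ has to bridge two ``forbidden'' insertion points at once; the argument must simultaneously track the kept and deleted $C$-edges, the two $w$-edges, and any auxiliary chord colours controlled by part~(1), in order to verify properness at every vertex of the new cycle.
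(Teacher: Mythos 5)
Your part (1) is correct and is essentially the paper's argument: apply Proposition \ref{pro3} on the proper path to get $c(wu_{i-1})=c(wu_{j-1})$ and contradict the rainbow distinctness of the colors $c(wu_k)$ on $R_C(w)$ (resp.\ $Q_C(w)$). Parts (2) and (3), however, are not proofs but announcements of a proof. You say the plan is to ``exhibit a proper $(\ell+1)$-cycle using the chord $u_ru_q$'' and that ``properness is then verified edge by edge,'' but you never write down the cycle, and you yourself flag the construction as ``the expected main obstacle.'' That obstacle is exactly where all the content of the lemma lives. The paper's proof of (2) uses the specific cycle $C'=wu_2\overrightarrow{C}u_ju_i\overrightarrow{C}u_1u_{i-1}\overleftarrow{C}u_{j+1}w$, which requires a \emph{second} chord $u_1u_{i-1}$ in addition to $u_iu_j$; controlling $c(u_1u_{i-1})$ splits into the case $u_{i-1}\neq u_a$ (where part (1) gives $c(u_1u_{i-1})\in\{c(wu_1),c(wu_{i-1})\}$ and properness follows) and the case $u_{i-1}=u_a$ (where one must propagate $c(u_1u_k)\in\{c(u_1u_{k+1}),c(u_ku_{k-1})\}$ along $u_{a-1}\overrightarrow{C}u_2$ and then exhibit two further candidate cycles, one of which is proper). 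None of this is recoverable from your description. Part (3) is worse: the paper needs a case analysis on whether $c(wu_{a-1})$ or $c(wu_{b+1})$ equals $c_w$, $c(wu_\ell)$, or $c(wu_3)$, with a different explicit $(\ell+1)$-cycle in each branch; ``runs the same construction as in part (2)'' does not cover this.

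There is also a circularity in your ``decisive preliminary observation.'' You assert that the two palettes $\{c(wu_k):u_k\in R_C(w)\}$ and $\{c(wu_k):u_k\in Q_C(w)\}$ overlap only in $c(u_1u_2)$, and you invoke this separation to verify properness in part (2). But that separation is a \emph{consequence} of Lemma \ref{lemma6}(2) together with the no-joint-monochromatic-triangle hypothesis (if $(u_i,u_j)\in D_w$ and $c(wu_i)=c(wu_j)$ then $wu_iu_j$ is monochromatic and shares $w$ with $wu_1u_2$); it cannot be assumed while proving (2). A priori nothing prevents $c(wu_i)=c(wu_j)$ with $u_i\in R_C(w)$, $u_j\in Q_C(w)$ before the lemma is established, and ruling that out is precisely what (2) accomplishes.
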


\begin{proof}
(1) \ Suppose, to the contrary, that there exist two vertices $u_i$ and $u_j$ in $R_C(w)\setminus \{u_a\}$ that have no $DP_w$ (we might as well assume that $u_i$ is in front of $u_j$).
Then, according to Proposition \ref{pro3},
we have $c(wu_{i-1})= c(wu_{j-1})$, a contradiction.

\vspace{2mm}

(2) \ Suppose, to the contrary, that $(u_i,u_j)\in I_w$,
where $u_i\in R_C(w)\setminus \{{u_a}\}$ and $u_j\in Q_C(w)\setminus\{u_b\}$.
Let $C'=wu_2\overrightarrow{C}u_ju_i\overrightarrow{C}u_1u_{i-1}
\overleftarrow{C}u_{j+1}w$ be a cycle of length $\ell+1$ containing $v_1$.
If $u_{i-1}\neq u_a$,
then according to (1)
we have $c(u_1u_{i-1})\in \{c(wu_1),c(wu_{i-1})\}$.
Thus, $c(u_1u_{i-1})\notin \{c(u_1u_2),c(u_{i-1}u_{i-2})\}$.
Since $c(wu_1)\neq c(wu_{j+1})$,
$C'$ is proper, a contradiction.
If $u_{i-1}= u_a$,
then we can get that $c(u_1u_{a})\in \{c(u_1u_l),c(u_{a}u_{a-1})\}$.
Otherwise, $C'$ is proper.
Thus, $c(u_1u_{a})\neq c(v_{a}v_{a+1})$.
Hence, $c(u_1u_k)\in\{c(u_1u_{k+1}),c(u_ku_{k-1})\}$
for $u_k\in V(u_{a-1}\overrightarrow{C}u_{2})$.
Then, $c(u_1u_{j+1})\neq c(u_{j+1}u_{j+2})$.
Therefore, $wu_2\overrightarrow{C}u_ju_i\overrightarrow{C}u_1u_{j+1}
\overrightarrow{C}u_{i-1}w$ or $wu_\ell\overleftarrow{C}u_i u_j\overleftarrow{C}u_1u_{j+1}\overrightarrow{C}u_{i-1}w$
is a requested cycle, a contradiction.

\vspace{2mm}

(3) \ We prove this statement by classified discussion.
If $c(wu_{a-1})=c_w$,
then $c(u_1u_{a-1})\neq c(u_{a-1}u_a)$.
Since $C_{u_1}$ is proper,
$c(u_1u_k)\in \{c(u_1u_{a-1}), c(u_ku_{k-1})\}$ for $u_k\in V(u_{a-2}\overleftarrow{C}u_2)$.
Therefore, $c(u_1u_k)\notin \{c_w,c(u_ku_{k+1})\}$ for $u_k\in P_C^2(w)$.
Assume that there exist two vertices $u_i\in R_C(w)\setminus\{u_1\}$ and $u_j\in Q_C(w)\setminus\{u_2\}$
such that $(u_i,u_j)\in I_w$.
Then, $c(u_iu_j)\notin \{c(u_{i}u_{i+1}),c(u_{j}u_{j-1})\}$.
Since $c(u_1u_{j+1})\notin \{c_w,c(u_{j+1}u_{j+2})\}$
and $c(wu_{i-1})\neq c(wu_{l})$,
$wu_{l}\overleftarrow{C}u_{i}u_{j}\overleftarrow{C}u_1 u_{j+1}\overrightarrow{C}u_{i-1} w$
is a requested cycle, a contradiction.
Symmetrically, the result holds if $c(wu_{b+1})=c_w$.
Now suppose $c_w\notin\{c(wu_{a-1}),c(wu_{b+1})\}$.
By repeating the proof procedure of (2),
we know that $(u_a,Q_C(w)\setminus \{u_b\})\cup(R_C(w)\setminus \{u_a\},u_b)\subseteq D_w$.
Hence, for $u_i,u_j\in R_C(w)\cup Q_C(w)\setminus \{u_1,u_a,u_b\}$
we have $c(wu_i)\neq c(wu_j)$.
We prove $(u_a,u_b)\in D_w$ in the following cases by contradiction.

If $c(wu_{a-1})= c(wu_{\ell})$,
then $c(u_{1}u_{a-1})\in\{c(u_{a-1}u_{a-2}),c(u_{1}u_{\ell})\}$;
otherwise, $wu_{2}\overrightarrow{C}u_{b }u_{a }\overrightarrow{C}\\ u_{1} u_{a-1}\overleftarrow{C}u_{b+1}w$
is a requested cycle.
When $c(u_{1}u_{a-1})=c(u_{a-1}u_{a-2})$,
we have $c(u_{1}u_k)=c(u_{k}u_{k-1})$ for $u_k\in V(u_{a-1}\overleftarrow{C}u_{2})$.
Hence, $c(u_{1}u_{b+1})=c(u_{b }u_{b+1})\notin \{c(u_{1}u_{\ell}),c(u_{b+1}u_{b+2})\}$.
Then,  $wu_{2}\overrightarrow{C}u_{b}u_{a}\overrightarrow{C}u_{1}u_{b+1}\overrightarrow{C}u_{a-1}w$
is a requested cycle, a contradiction.
When $c(u_{1}u_{a-1})=c(u_{1}u_{\ell})$,
we have $c(u_{1}u_{a-1})\notin\{c(u_{a-1}u_{a-2}),c_w\}$.
Then,  $wu_{\ell}\overleftarrow{C}u_{a}u_{b}\overleftarrow{C}u_{1}u_{a-1}\overrightarrow{C}u_{b+1}w$
is a requested cycle, a contradiction.
Symmetrically, the result holds if $c(wu_{b+1})=c(wu_3)$.

The last case is that $c(wu_{a-1})\neq c(wu_{\ell})$.
According to Proposition \ref{pro1},
we know that there is a vertex $u_i\in R_C(w)$ such that $c(wu_s)=c(wu_i)$.
Since $c(wu_1)\notin \{c(wu_{b+2}),c(wu_{i-1})\}$,
$c(u_{1}u_{b+1})\in\{c(u_{b+1}u_{b}),c(u_{1}u_{\ell})\}$;
otherwise, one of $wu_{2}\overrightarrow{C}u_{b+1}u_{1}\overleftarrow{C}u_{b+2}w$
and $wu_{2}\overrightarrow{C}u_{b+1}u_{1}\\ \overleftarrow{C}u_iu_{s}\overrightarrow{C}u_{i-1}w$
is a requested cycle.
At the same time we have $c(u_{1}u_{b+1})\in \{c(u_{b+1}u_{b+2}),c(u_1u_{2})\}$;
otherwise, $wu_{\ell}\overleftarrow{C}u_{a}u_{b}\overleftarrow{C}u_{1} u_{b+1}\overrightarrow{C}u_{a-1} w$
is a requested cycle.
Therefore, $c(u_{1}u_{b+1})=c(u_{1}u_{\ell})=c(u_{b+1}u_{b+2})$.
Thus, we can get $c(u_{1}u_k)=c(u_ku_{k+1})$ for $u_k\in V(u_{b+1}\overrightarrow{C}u_{\ell})$,
Hence,  $c(u_{1}u_{a-1})=c(u_{a-1}u_a)\notin\{c(u_{1}u_{\ell}),c(u_{a-2}u_{a-1})\}$.
Then, $wu_{2} \overrightarrow{C}u_{b}u_{a}\overrightarrow{C}u_{1}u_{a-1}\overleftarrow{C}u_{b+1}w$
is a requested cycle, a contradiction.
So far, we have completed the proof of (3).
\end{proof}

Lemma \ref{lemma6} (1) and (2) claim that
for $u_i,u_j\in R_C(w)\cup Q_C(w)\setminus \{u_1,u_a,u_b\}$,
we have $c(wu_i)\neq c(wu_j)$.
Lemma \ref{lemma6} (3) claims that
if neither $P_C^1(w)\setminus R_C(w)$ nor $P_C^2(w)\setminus Q_C(w)$ is empty, then for $u_i,u_j\in R_C(w)\cup Q_C(w)\setminus \{u_1\}$,
we have $c(wu_i)\neq c(wu_j)$.
Then, from Lemmas \ref{lemma5} and \ref{lemma6}, we can get the following result.

\begin{pro}\label{p3}
Let $(G,c)$ be an edge-colored complete graph on $n\geq 3$ vertices such that $\delta^c(G)\geq \frac{n+1}{2}$, and not contain any joint monochromatic triangles.
For any $w\in W_3(C)$, if neither $P_C^1(w)\setminus R_C(w)$ nor $P_C^2(w)\setminus Q_C(w)$ is empty, then $(G,c)$ is properly vertex-pancyclic.
\end{pro}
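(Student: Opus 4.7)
The plan is to derive a contradiction by assuming that some vertex $v_1$ lies on a proper $\ell$-cycle $C$ in $(G,c)$ but on no proper $(\ell+1)$-cycle, for some $4\le\ell\le n-1$. By Propositions \ref{p1} and \ref{p2} I may further assume $W_1(C)=\emptyset$, $|W_2(C)|\le 1$, and that no $w'\in W_3(C)$ has $|P_C^1(w')|=|P_C^2(w')|=1$. Fix the vertex $w\in W_3(C)$ supplied by the hypothesis and use the relabeling $u_1,u_2,\ldots,u_\ell$ from Section 3. Since $R_C(w)\subsetneq P_C^1(w)$, Lemma \ref{lemma5} immediately forces $|P_C^3(w)|\le 1$, so $V(C)$ splits into $R_C(w)\cup Q_C(w)$, the two nonempty outer segments $P_C^1\setminus R_C$ and $P_C^2\setminus Q_C$, and at most one further vertex of $P_C^3$.

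Combining Lemma \ref{lemma6}(1)--(3), the set $V_1:=(R_C(w)\setminus\{u_a\})\cup(Q_C(w)\setminus\{u_b\})$ has the property $DP_w$; moreover the pairs $(u_a,u_b)$, $\{(u_a,u_j):u_j\in Q_C\setminus\{u_2\}\}$, and $\{(u_i,u_b):u_i\in R_C\setminus\{u_1\}\}$ all lie in $D_w$, and the colors $\{c(wu_i):u_i\in R_C\cup Q_C\setminus\{u_1\}\}$ are pairwise distinct. I apply Fact \ref{fact1} to $V_1$ to obtain a vertex $u^*\in V_1$ with $d^c_{V_1}(u^*)\le(|V_1|+1)/2$ such that $c(wu^*)$ already appears at $u^*$ inside $V_1$. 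The plan is then to establish $(u^*,v)\in D_w$ for every $v\in V(G)\setminus V_1\setminus\{u^*,w\}$; this forces $c(u^*v)\in\{c(wu^*),c(wv)\}$, and hence
\[
\mathcal{C}(u^*,V(G)\setminus V_1\setminus\{u^*\})\subseteq\mathcal{C}(w,V(G)\setminus V_1\setminus\{u^*,w\})\cup\{c(wu^*),c(u^*w)\}.
\]
A careful count that uses $|V_1|=|R_C|+|Q_C|-2$, the $|R_C|+|Q_C|-1$ distinct values of $c(wu_i)$ on $R_C\cup Q_C\setminus\{u_1\}$, $|P_C^3|\le 1$, $|W_2|\le 1$, $|W_3|\le\ell/2$, together with the maximality of $R_C$ and $Q_C$ (which forces many $c(wv)$ for $v\in P_C^1\setminus R_C$ and $v\in P_C^2\setminus Q_C$ to repeat colors already in the rainbow list) should yield $d^c(u^*)<(n+1)/2$, contradicting $\delta^c(G)\ge(n+1)/2$.

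The main obstacle will be verifying $(u^*,v)\in D_w$ for $v$ in the outer segments $P_C^1\setminus R_C$, $P_C^2\setminus Q_C$, and for $v\in W_2\cup(W_3\setminus\{w\})$; for $v\in\{u_a,u_b\}$, Lemma \ref{lemma6}(3) already suffices. For the remaining cases, I plan to mimic the cycle-splicing style of the proofs of Lemmas \ref{lemma5} and \ref{lemma6}: assume for contradiction that $(u^*,v)\in I_w$ and construct an explicit proper $(\ell+1)$-cycle through $v_1$ by joining the detour $u^*vw$ to a judiciously chosen arc of $C$ that exploits the rainbow structure and maximality of $R_C$ and $Q_C$. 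The no-joint-monochromatic-triangle hypothesis will be invoked exactly there (in analogy with the use in Lemma \ref{lemma3}) to rule out the parallel monochromatic triangles that would otherwise block the splicing argument; for $v\in W_3\setminus\{w\}$, the same hypothesis guarantees that the two monochromatic triangles associated with $w$ and $v$ share no edge, which should in turn constrain the possible values of $c(u^*v)$ sufficiently to keep the final color count below $(n+1)/2$.
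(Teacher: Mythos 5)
Your opening move is right and matches the paper: since both outer segments are nonempty, Lemma \ref{lemma5} forces $|P_C^3(w)|\le 1$. After that, however, you switch to a dependence-property counting argument, and this is where the proposal has a genuine gap. The two load-bearing steps are both deferred: (i) you never actually establish $(u^*,v)\in D_w$ for $v$ in $P_C^1(w)\setminus R_C(w)$, $P_C^2(w)\setminus Q_C(w)$, $W_2(C)$ and $W_3(C)\setminus\{w\}$ --- for the off-cycle vertices in particular, the cycle-splicing constructions you invoke give no obvious proper $(\ell+1)$-cycle, since the detour $u^*vw$ uses two chords whose colors are exactly what you are trying to control; and (ii) the final count is not carried out and does not clearly close. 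Since $V_1\subseteq V(C)$ while $n$ may greatly exceed $\ell$, even the full inclusion $\mathcal{C}(u^*,V(G)\setminus V_1)\subseteq\{c(wu^*)\}\cup\mathcal{C}(w,V(G)\setminus V_1)$ only transfers the problem to bounding $|\mathcal{C}(w,V(G)\setminus V_1)|$, and the colors from $w$ to $W_3(C)\setminus\{w\}$ are essentially unconstrained by anything you have assembled, so $d^c(u^*)<\frac{n+1}{2}$ does not follow.

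The paper's proof needs none of this machinery. The key idea you are missing is that the hypothesis is used \emph{positively}: because $R_C(w)$ is a maximal rainbow segment and $P_C^1(w)\setminus R_C(w)\neq\emptyset$, the color $c(wu_{a-1})$ must repeat an earlier color, and a short cycle-splicing argument pins it down to $c(wu_{a-1})=c_w$; symmetrically $c(wu_{b+1})=c_w$ (and $c(w,P_C^3(w))=c_w$ when $|P_C^3(w)|=1$). Having both equal to $c_w$ immediately yields a contradiction: either one of two explicit proper $(\ell+1)$-cycles through $v_1$ (such as $wu_3\overrightarrow{C}u_{b+1}u_1u_2u_{b+2}\overrightarrow{C}u_\ell w$), or a second monochromatic triangle $wu_1u_{a-1}$ sharing the edge $wu_1$ with $wu_1u_2$, violating the no-joint-monochromatic-triangles hypothesis. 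No vertex of small color degree is ever needed. I would recommend abandoning the counting route for this proposition and instead proving the two color identities $c(wu_{a-1})=c(wu_{b+1})=c_w$ directly.
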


\begin{proof}
Suppose, to the contrary, that there exists a vertex $v$ which is contained in a proper $\ell$-cycle $C$
in $(G,c)$ for some $\ell$ with $4\leq \ell\leq n-1$, but no proper cycle of length $\ell+1$ in $(G,c)$ contains vertex $v$.
According to Lemma \ref{lemma5},
we can get $|P_C^3(w)|\leq 1$.
If $P_C^3(w)=\emptyset$,
then $c(wu_{a-1})=c(wu_{b+1})=c_w$;
otherwise, $(G,c)$ has a requested cycle.
Thus, $c(u_2u_{b+2})\neq c(u_{b+2}u_{b+3})$.
Then,
$wu_3\overrightarrow{C}u_{b+1}u_1u_2u_{b+2}\overrightarrow{C}u_\ell w$
or $wu_1u_{b+1}\overleftarrow{C}u_2u_{b+2}\overrightarrow{C}u_\ell w$
is a requested cycle, a contradiction.
If $|P_C^3(w)|= 1$,
then $c(w,P_C^3(w))=c(wu_{a-1})=c(wu_{b+1})=c_w$.
Thus, $c(u_1u_{t+1})=c(u_{t+1}u_s)$.
Since $C_{u_1}$ is proper,
we have $c(u_1u_k)=c(u_ku_{k+1})$ for $u_k\in P_C^1(w)$.
Hence, $wu_1u_{a-1}$ is monochromatic, a contradiction.
\end{proof}

Thus, if there is no proper cycle of length $\ell+1$ containing $v_1$ in $(G,c)$ and $W_3(C)\neq \emptyset$,
then for any $w\in W_3(C)$, either $P_C^1(w)\setminus R_C(w)$ or $P_C^2(w)\setminus Q_C(w)$ is empty.
Hence, we might as well suppose that $P_C^2(w)\setminus Q_C(w)=\emptyset$, that is $Q_C(w)=P_C^2(w)$.

\begin{pro}\label{p4}
Let $(G,c)$ be an edge-colored complete graph on $n\geq 3$ vertices such that $\delta^c(G)\geq \frac{n+1}{2}$, and not contain any joint monochromatic triangles.
If for any $w\in W_3$, $P_C^1(w)=V(u_3\overrightarrow{C}u_2)$ and $P_{C_{u_1}}^1(u_1)=V(u_3\overrightarrow{C_{u_1}}u_2)$, and $u_4\overrightarrow{C}u_2$ is a rainbow path,
then $(G,c)$ is properly vertex-pancyclic.
\end{pro}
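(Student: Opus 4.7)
The plan is a proof by contradiction. Suppose $v_1$ lies on a proper $\ell$-cycle $C=v_1v_2\cdots v_\ell v_1$ but on no proper $(\ell+1)$-cycle in $(G,c)$, with $4\le\ell\le n-1$. Fix $w\in W_3(C)$ and use its induced relabeling $u_1u_2\cdots u_\ell$ of $C$. First I would unpack the hypothesis to pin down the coloring on $G[V(C)\cup\{w\}]$: $P_C^2(w)=\{u_2\}$, $P_C^3(w)=\emptyset$, the monochromatic triangle at $w$ is $wu_1u_2$ of color $c(u_1u_2)$, and the increasing-following condition forces $c(wu_i)=c(u_iu_{i+1})$ for every $i\in\{3,4,\ldots,\ell,1\}$. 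Because $u_4\overrightarrow{C}u_2$ is rainbow, the $\ell-2$ colors $c(wu_4),c(wu_5),\ldots,c(wu_\ell),c(wu_1)$ are pairwise distinct, giving $|\mathcal{C}(w,V(C))|\le\ell-1$. The analogous hypothesis for $u_1$ on $C_{u_1}$ fixes $c(u_1w)=c(u_1u_2)$ and $c(u_1u_j)=c(u_ju_{j+1})$ for $j\in\{3,\ldots,\ell-1\}$, so the colors of all edges incident with $u_1$ inside $V(C_{u_1})$ are likewise determined.

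Next I would attempt to exhibit a proper $(\ell+1)$-cycle through $v_1=u_1$. A natural candidate is $C^{*}=u_1u_3u_2u_4u_5\cdots u_\ell wu_1$ of length $\ell+1$. Using the identifications above together with the rainbow property, the only three consecutive pairs of edges in $C^{*}$ whose properness is not automatic are $c(u_2u_3)$ vs.\ $c(u_2u_4)$, $c(u_2u_4)$ vs.\ $c(u_4u_5)$, and $c(u_1u_2)$ vs.\ $c(u_1u_3)=c(u_3u_4)$. Hence, if all three of the equalities $c(u_2u_3)=c(u_2u_4)$, $c(u_2u_4)=c(u_4u_5)$, and $c(u_1u_2)=c(u_3u_4)$ fail, then $C^{*}$ is a proper $(\ell+1)$-cycle containing $v_1$, contradicting the assumption. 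So at least one of these equalities must hold; analogous constructions (reversing direction on $u_4\overrightarrow{C}u_2$, swapping the roles of $u_1$ and $u_2$, or choosing a different insertion point for $w$) yield a whole family of forced equalities that severely restrict the chord colors near $u_2$, $u_3$, and $u_4$.

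Finally I would convert these restrictions into a color-degree contradiction. Using $W_1(C)=\emptyset$ from Proposition~\ref{p1}(1) and $|W_2(C)|\le 1$ from Lemma~\ref{lemma3}, together with the Lemma~\ref{lemma4}-style argument for $W_2$-vertices and a double-replacement cycle construction for each $w'\in W_3(C)\setminus\{w\}$, one argues that every edge from $w$ to $V(G)\setminus V(C)$ has color already in $\mathcal{C}(w,V(C))$. Thus $d^c(w)\le\ell-1$, which already contradicts $\delta^c(G)\ge\frac{n+1}{2}$ whenever $\ell\le\lfloor\frac{n-1}{2}\rfloor$. For the complementary range, the coloring-orientation device on $G[V(C)\cup\{w\}]$, parallel to the closing computation of Proposition~\ref{p1}(3), averages over $V(C)$ to produce a vertex of color degree strictly below $\frac{n+1}{2}$. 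The main obstacle will be the double-replacement cycle construction for $w'\in W_3(C)\setminus\{w\}$: one must align the rainbow segment $u_4\overrightarrow{C}u_2$ with $w'$'s own rainbow segment and handle both possible orientations of its following, and the edge-disjointness of the two monochromatic triangles $wu_1u_2$ and the one associated with $w'$, guaranteed by the no-joint-monochromatic-triangle hypothesis, provides exactly the flexibility needed for this alignment.
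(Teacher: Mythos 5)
Your opening structural analysis is fine, but the core of your argument has a genuine gap: you aim the color-degree contradiction at $w$ itself, and the bound $d^c(w)\le \ell-1$ is only useful when $\ell$ is small. Since $\ell$ ranges up to $n-1$, the bound $d^c(w)\le\ell-1\le n-2$ is no contradiction at all for most of the range, and your fallback for the complementary range --- ``the coloring-orientation device \ldots averages over $V(C)$ to produce a vertex of color degree strictly below $\frac{n+1}{2}$'' --- is asserted, not proved. An averaging/orientation argument over cycle vertices only works after one has first established (i) a large subset of $V(C)$ with the $DP_w$ property, and (ii) control of the colors from a candidate cycle vertex $u_p$ to the few remaining vertices $u_1,u_2,u_3,u_4$ and to $V(G)\setminus V(C)$. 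Neither is supplied by your construction: the single candidate cycle $C^{*}=u_1u_3u_2u_4\cdots u_\ell wu_1$ and its variants constrain only a handful of chords among $u_2,u_3,u_4$, whereas what is needed is a constraint on \emph{every} chord $u_2u_k$ and $u_3u_k$.

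For comparison, the paper's proof proceeds exactly by targeting a cycle vertex: it inserts $w$ between $u_{k-1}$ and $u_k$ while routing through the chord $u_2u_k$ (the cycle $wu_{k-1}\overleftarrow{C}u_2u_k\overrightarrow{C}u_1w$), forcing $c(u_2u_k)\in\{c(u_2u_3),c(u_ku_{k+1})\}$ for all $k$; it then invokes Lemma~\ref{lemma6} to get that $V(u_5\overrightarrow{C}u_\ell)$ has the $DP_w$, extracts via Fact~\ref{fact1} a vertex $u_p$ with $d^c_{V(u_5\overrightarrow{C}u_\ell)}(u_p)\le\frac{\ell-3}{2}$, shows $u_p$ is the unique vertex with $c(u_2u_p)=c(u_2u_3)\ne c(u_pu_{p+1})$ by further cycle constructions forcing $c(u_3u_i)\in\{c(wu_3),c(wu_i)\}$, and finishes with an out-degree count in the coloring orientation of $G[V(u_5\overrightarrow{C}u_\ell)]$. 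This works uniformly in $\ell$ because the contradiction lives at a vertex of $C$, not at $w$. To repair your proposal you would need to replace the $d^c(w)$ bound with this (or an equivalent) family of chord constraints and a degree bound at a cycle vertex; as written, the case $\ell>\lfloor\frac{n-1}{2}\rfloor$ is not closed.
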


\begin{proof}
Suppose, to the contrary, that there exists a vertex $v$ which is contained in a proper $\ell$-cycle $C$
in $(G,c)$ for some $\ell$ with $4\leq \ell \leq n-1$, but no proper cycle of length $\ell+1$ in $(G,c)$ contains vertex $v$.
Since $wu_{k-1}\overleftarrow{C}u_2u_k\overrightarrow{C}u_1w$
is of length $\ell+1$ and contains $v_1$,
we have $c(u_2u_k)\in \{c(u_2u_3),c(u_ku_{k+1})\}$ for $u_k\in V(u_3\overrightarrow{C}u_1)$.
According to Lemma \ref{lemma6},
we have $V(u_5\overrightarrow{C}u_\ell)$ has the $DP_w$.
Then, there is a vertex $u_p$
such that $d^c_{V(u_5\overrightarrow{C}u_\ell)}(u_p)\leq \frac{\ell-3}{2}$.
If $c(u_2u_p)=c(u_pu_{p+1})$,
then $d^c(u_p)\leq \frac{\ell+1}{2}<\frac{n+1}{2}$, a contradiction.
Thus, $c(u_2u_p)=c(u_2u_3)\neq c(u_pu_{p+1})$.
Then, to avoid $wu_2u_p\overrightarrow{C}u_1u_i\overleftarrow{C}u_3u_{i+1}
\overrightarrow{C}u_{p-1}w$ for $u_i\in V(u_5\overrightarrow{C}u_{p-1})$
and $wu_{i-1}\overleftarrow{C}u_pu_2\overleftarrow{C}u_iu_3
\overrightarrow{C}u_{p-1}w$ for $u_i\in V(u_{p+1}\overrightarrow{C}u_{\ell})$
being requested cycles,
we have $c(u_3u_i)\in \{c(wu_3),c(wu_i)\}$ for $u_i\in V(u_5\overrightarrow{C}u_1)\setminus \{u_p\}$.
Note that if there is another vertex $u$ such that $c(uu_2)=c(u_2u_3)\neq c(wu)$, we have $(u_3,V(u_4\overrightarrow{C}u_{p-1}))\subseteq DP_w$.
Then, $u_3\in R_C(w)$; otherwise, $(G,c)$ has joint monochromatic triangles. Thus, there is a vertex in $V(u_3\overrightarrow{C}u_\ell)$
whose color degree is less than $\frac{n+1}{2}$, a contradiction.
Hence, $u_p$ is the unique vertex such that $c(u_2u_p)=c(u_2u_3)\neq c(u_pu_{p+1})$.
If $n\leq 7$, we can easily find a vertex of $d^c(u)<\frac{n+1}{2}$.
If $n>8$, we give $(G[V(u_5\overrightarrow{C}u_\ell)],c)$ a coloring orientation.
If there is a distinct vertex $u$ such that
$d^+_D(u)\geq \frac{\ell-5}{2}$,
then $d^c(u)\leq \frac{\ell+1}{2}<\frac{n+1}{2}$, a contradiction.
Thus, $d^+_D(u_p)\geq \frac{(\ell-4)(\ell-5)}{2}-\frac{(\ell-5)(\ell-6)}{2}=\ell-5$.
Then $d^c(u_p)\leq 4$, a contradiction.
\end{proof}

\begin{lemma}\label{lemma7}
Suppose there is no proper cycle of length $\ell+1$ containing $v_1$ in $(G,c)$. Then for $w\in W_3(C)$ with $P_C^3(w)\neq \emptyset$, we have

(1) \ $(R_C(w)\setminus \{{u_1},u_a\},u_b)\cup (u_a,Q_C(w)\setminus\{u_2,u_b\})\subseteq D_w$.

(2) \ if all of $|P_{C}^1(w)|,~|P_{C}^2(w)|$ and $|P_{C}^3(w)|$ are larger than 1, then $(u_a,u_b)\in D_w$.
\end{lemma}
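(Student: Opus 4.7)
The plan is to prove both parts by contradiction, constructing in each case a proper $(\ell+1)$-cycle through $v_1$ whenever the asserted $DP_w$-pair fails. Each cycle is obtained from $C$ by inserting $w$ via two long jumps, in the same spirit as the cycle used in the proof of Lemma~\ref{lemma6}(2).

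For part~(1), I would suppose for contradiction that $(u_i,u_b)\in I_w$ for some $u_i\in R_C(w)\setminus\{u_1,u_a\}$ and consider
\[
C^{\ast}=w\,u_2\,\overrightarrow{C}\,u_b\,u_i\,\overrightarrow{C}\,u_1\,u_{i-1}\,\overleftarrow{C}\,u_{b+1}\,w.
\]
It has length $\ell+1$ and visits every vertex of $C$ together with $w$. Properness at the two jumps $u_bu_i$ and $u_iu_{i+1}$ follows from $(u_i,u_b)\in I_w$ together with $c(wu_b)=c(u_bu_{b-1})$ and $c(wu_i)=c(u_iu_{i+1})$; properness at the triple $u_\ell u_1,\, u_1u_{i-1},\, u_{i-1}u_{i-2}$ is obtained by applying Lemma~\ref{lemma6}(1) to the pair $(u_1,u_{i-1})$, which forces $c(u_1u_{i-1})\in\{c_w,\,c(wu_{i-1})\}$, and then using the rainbow structure of $R_C(w)$ and the properness of $C$ to exclude the bad equalities. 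At the closing edge $u_{b+1}w$, properness of $C$ gives $c(wu_{b+1})\ne c(u_{b+1}u_{b+2})$, while the no-joint-monochromatic-triangles hypothesis forces $c(wu_{b+1})\ne c_w=c(wu_2)$ (otherwise $wu_bu_{b+1}$ and $wu_1u_2$ are two monochromatic triangles sharing $w$). The boundary subcase $i=a+1$, where Lemma~\ref{lemma6}(1) does not apply to $(u_1,u_a)$, is handled separately by exploiting $P_C^3(w)\ne\emptyset$ to reroute the cycle through $u_{s-1}$. The symmetric inclusion $(u_a,Q_C(w)\setminus\{u_2,u_b\})\subseteq D_w$ is obtained from the mirror cycle
\[
\widetilde{C}=w\,u_\ell\,\overleftarrow{C}\,u_a\,u_j\,\overleftarrow{C}\,u_1\,u_{j+1}\,\overrightarrow{C}\,u_{a-1}\,w.
\]

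For part~(2), under the stronger hypothesis $|P_C^1(w)|,|P_C^2(w)|,|P_C^3(w)|>1$, I would assume $(u_a,u_b)\in I_w$ and analyze
\[
C^{\ast\ast}=w\,u_2\,\overrightarrow{C}\,u_b\,u_a\,\overrightarrow{C}\,u_1\,u_{a-1}\,\overleftarrow{C}\,u_{b+1}\,w.
\]
The transitions at $u_bu_a$, $u_au_{a+1}$ and the closing edge $u_{b+1}w$ are controlled exactly as in~(1). The real obstacle is the pair of transitions at $u_1u_{a-1}$ and $u_{a-1}u_{a-2}$, because $u_{a-1}$ lies just outside $R_C(w)$ and no earlier result pins down $c(u_1u_{a-1})$. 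By the maximality defining $R_C(w)$, the color $c(wu_{a-1})$ coincides with some $c(wu_k)$ for $u_k\in R_C(w)$; a case analysis on which color is repeated, leveraging $|P_C^1(w)|>1$ (so $R_C(w)\supsetneq\{u_1\}$), $|P_C^2(w)|>1$ (so $Q_C(w)\supsetneq\{u_2\}$) and $|P_C^3(w)|>1$ (so that when $a=s$ there is a second vertex of $P_C^3$ available for a rerouting of $C^{\ast\ast}$), pins down $c(u_1u_{a-1})$ sufficiently to rule out the remaining collisions.

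The main obstacle throughout is this boundary analysis at $u_{a-1}$ in part~(2), where the rainbow subpath $R_C(w)$ has just failed and no previously proved dependence property is available. The recurring key leverage is that every unwanted color equality must either produce a proper $(\ell+1)$-cycle containing $v_1$, contradicting the standing hypothesis, or force two monochromatic triangles sharing $w$, contradicting the joint-monochromatic-triangle hypothesis; one of these two conclusions always closes the case.
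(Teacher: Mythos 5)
Your part (1) is in line with the paper, which disposes of (1) in one sentence by saying the method is the same as Lemma \ref{lemma6}(2) and (3); your cycle $C^{\ast}$ is exactly the cycle used there. (One local slip: your claim that $c(wu_{b+1})=c_w$ would make $wu_bu_{b+1}$ monochromatic is not right, since $c(wu_b)=c(u_bu_{b-1})\neq c(u_bu_{b+1})$ by properness of $C$; that transition needs a different justification. But the overall approach to (1) matches the paper.)

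Part (2) has a genuine gap, and it comes from misplacing the vertices $u_{a-1}$ and $u_{b+1}$. Under the hypothesis of (2) we have $|P_C^3(w)|\geq 2$, so Lemma \ref{lemma5} forces $R_C(w)=P_C^1(w)$ and $Q_C(w)=P_C^2(w)$, hence $u_a=u_s$ and $u_b=u_t$. Consequently $u_{a-1}=u_{s-1}$ and $u_{b+1}=u_{t+1}$ both lie in $P_C^3(w)$ --- they are not vertices of $P_C^1(w)\setminus R_C(w)$ where ``the rainbow subpath has just failed.'' Your key step, invoking the maximality of $R_C(w)$ to conclude that $c(wu_{a-1})$ repeats some $c(wu_k)$ with $u_k\in R_C(w)$, is therefore vacuous: when $R_C(w)=P_C^1(w)$ the rainbow path simply terminates at $u_s$ and no repetition is forced; instead $c(wu_{a-1})=c(wu_{b+1})=c(w,P_C^3(w))$ is the single color on $\partial(w,P_C^3(w))$ given by Lemma \ref{lemma1}(2), which may or may not equal $c_w$. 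This also breaks your cycle $C^{\ast\ast}$ at the vertex $w$ itself: its two edges at $w$ are $wu_2$ (colored $c_w$) and $wu_{b+1}=wu_{t+1}$ (colored $c(w,P_C^3(w))$), and nothing rules out their equality. The case you relegate to an unspecified ``rerouting'' ($a=s$) is in fact the only case, so the substantive content of (2) is missing. The paper's proof starts precisely from $u_a=u_s$, $u_b=u_t$, first reduces (using that $c(wu_\ell)=c(wu_3)$ would force $d^c(w)=2$) to the situation $c(w,P_C^3(w))\neq c(wu_3)$, and then splits on whether $c(u_2u_{t+1})$ equals $c_w$ or $c(u_tu_{t+1})$, in each subcase exhibiting a proper $(\ell+1)$-cycle through $v_1$ whose edges at $w$ are $wu_1,wu_{s-1}$ or $wu_3,wu_{s-1}$ --- i.e., it routes the $P_C^3(w)$ segment so that $w$ is not incident with both $u_2$ and $u_{t+1}$. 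You would need to rebuild your case analysis along these lines.
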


\begin{proof}
(1) \ The proof method is the same as that of Lemma \ref{lemma6} (2) and (3).

\vspace{2mm}

(2) \ From (1) and Lemma \ref{lemma6} (2) ,
we can get $(R_C(w)\setminus \{u_1\},Q_C(w)\setminus \{u_2\})\setminus \{(u_a,u_b)\}\subseteq D_w$.
If $c(wu_l)=c(wu_3)$,
we have $a=\ell$ and $b=3$.
Thus, $c(w,P_C^3(w))\neq c(wu_3)$;
otherwise, $d^c(w)=2<\frac{n+1}{2}$, a contradiction.
Thus, $c(w,P_C^3(w))\neq c(wu_\ell)$ or $c(w,P_C^3(w))\neq c(wu_3)$ holds. Without loss of generality, suppose $c(w,P_C^3(w))\neq c(wu_3)$.

Suppose, to the contrary, that $c(u_au_b)\notin \{c(u_au_{a+1}),c(u_{b}u_{b-1})\}$.
Since $|P_C^3(w)|\geq 2$, we have
$u_a=u_s$ and $u_b=u_t$.
Since $c(wu_{t+2})\neq c(wu_3)$,
we can get $c(u_2u_{t+1})\in \{c_w,c(u_{t+1}u_t)\}$.
If $c(u_2u_{t+1})=c_w$,
then $c(w,P_C^3(w))\neq c_w$.
To avoid $wu_1\overleftarrow{C}u_su_t\overleftarrow{C}u_2 u_{t+1}\overrightarrow{C}\\u_{s-1}w$
being a requested cycle,
we have $c(u_2u_{t+1})=c(u_{t+1}u_{t+2})$.
Hence, $c(u_2u_{s-1})=c(u_{s}u_{s-1})$.
Then, $wu_3\overrightarrow{C}u_t u_s\overrightarrow{C}u_2u_{s-1}\overleftarrow{C}u_{t+1}w$
or $wu_1\overleftarrow{C}u_su_t\overleftarrow{C}u_2u_{s-1}\overleftarrow{C}u_{t+1}w$
is a requested cycle, a contradiction.
If $c(u_2u_{t+1})=c(u_tu_{t+1})\neq c_w$,
then $c(u_2u_{t+1})\neq c(u_{t+2}u_{t+1})$.
Hence, $wu_3\overrightarrow{C}u_t u_s\overrightarrow{C}u_2u_{t+1}\overleftarrow{C}u_{s-1}w$
is a proper cycle of length $\ell$ containing $v_1$, a contradiction.
\end{proof}

Lemmas \ref{lemma6} (2) and \ref{lemma7} claim that
if all of $|P_{C}^1(w)|,~|P_{C}^2(w)|$ and $|P_{C}^3(w)|$ are
larger than 1,
then for $u_i,u_j\in R_C(w)\cup Q_C(w)\setminus \{u_1\}$,
we have $c(wu_i)\neq c(wu_j)$.

\begin{lemma}\label{lemma8}
Suppose there is no proper cycle of length $\ell+1$ containing $v_1$ in $(G,c)$.
If $|W_3(C)|\geq 2$, then for each vertex $w\in W_3(C)$ such that
neither $wv_1v_l$ nor $wv_1v_2$ is monochromatic, we have that
both $C_{u_1}$ and $C_{u_2}$
are proper cycles of length $\ell$ containing $v_1$.
\end{lemma}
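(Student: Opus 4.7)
The plan is to split the claim into two parts—containment of $v_1$ and properness—and handle them in turn, with $C_{u_1}$ treated in detail and $C_{u_2}$ dispatched by symmetry.

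First I would verify $v_1\in V(C_{u_1})\cap V(C_{u_2})$. By Lemma \ref{lemma1}(1), the \emph{unique} monochromatic triangle containing $w$ together with two vertices of $C$ is $wu_1u_2=wv_{y(w)}v_{y(w)+1}$. Hence the hypothesis that $wv_1v_2$ is not monochromatic forces $y(w)\neq 1$, so $v_1\neq u_1$ and therefore $v_1\in V(C_{u_1})=(V(C)\setminus\{u_1\})\cup\{w\}$; the hypothesis that $wv_1v_\ell$ is not monochromatic forces $y(w)\neq \ell$, so $v_1\neq u_2$ and $v_1\in V(C_{u_2})$. Each of $C_{u_1}$ and $C_{u_2}$ carries exactly $\ell$ vertices, so the length is $\ell$ as soon as properness is shown.

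Next I would check properness of $C_{u_1}=u_\ell\,w\,u_2\,u_3\cdots u_{\ell-1}\,u_\ell$. All $C$-edges on this cycle are inherited from the proper cycle $C$, so the only adjacencies to verify are
\[
\text{(i) }c(u_{\ell-1}u_\ell)\neq c(u_\ell w),\qquad \text{(ii) }c(u_\ell w)\neq c(wu_2),\qquad \text{(iii) }c(wu_2)\neq c(u_2u_3).
\]
From the monochromatic triangle $wu_1u_2$ one reads off $c(wu_2)=c(u_1u_2)=c_w$, and then (iii) follows because $C$ is properly coloured at $u_2$. In the generic position $u_\ell=v_{y(w)-1}\in P_C^1(w)$, Lemma \ref{lemma1}(2) gives $c(wu_\ell)=c(u_\ell u_1)$; then (i) follows from properness of $C$ at $u_\ell$, and (ii) follows from properness of $C$ at $u_1$ (which forces $c(u_\ell u_1)\neq c(u_1u_2)=c_w$). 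The argument for $C_{u_2}=u_1\,w\,u_3\,u_4\cdots u_\ell\,u_1$ is symmetric: using $u_3\in P_C^2(w)$ (the generic case) together with Lemma \ref{lemma1}(2) one obtains $c(wu_3)=c(u_3u_2)$, and then the three analogous adjacency checks follow from the same two instances of $C$-properness.

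The main obstacle is the degenerate possibility $u_\ell\notin P_C^1(w)$ (equivalently $|P_C^1(w)|=1$), and symmetrically $u_3\notin P_C^2(w)$, in which Lemma \ref{lemma1}(2) does not directly supply the right value of $c(wu_\ell)$ (resp.\ $c(wu_3)$). Here the plan is to invoke $|W_3(C)|\geq 2$: pick a second $w'\in W_3(C)$ with its own triangle $w'u_1'u_2'$ of colour $c_{w'}$, and use the combined structural constraints (uniqueness of the monochromatic triangle at $w$ and at $w'$, the absence of joint monochromatic triangles, and the position hypothesis $v_1\notin\{u_1,u_2\}$) to insert $w$, or $w$ together with $w'$, into $C$ so as to build an explicit proper $(\ell+1)$-cycle through $v_1$, contradicting the standing hypothesis. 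The delicate point of this step is checking that the insertion avoids all the colour repetitions that block the more naive insertions of $w$ alone; one does this by routing through $w'$ at precisely the broken adjacency and verifying each new adjacency against Lemma \ref{lemma1}(1) for $w'$ and the proper-colouring of $C$.
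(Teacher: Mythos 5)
Your setup is sound: the verification that $v_1\notin\{u_1,u_2\}$ from the non-monochromaticity of $wv_1v_2$ and $wv_1v_\ell$, and the three adjacency checks showing $C_{u_1}$ is proper whenever $|P_C^1(w)|\geq 2$ (and $C_{u_2}$ whenever $|P_C^2(w)|\geq 2$), match the paper exactly. Two remarks on the easy half. First, your diagnosis of the failure mode is slightly off: when $u_\ell\in P_C^3(w)$, Lemma \ref{lemma1} still gives $c(wu_\ell)\notin\{c(u_\ell u_{\ell-1}),c(u_\ell u_1)\}$, so the only adjacency of $C_{u_1}$ that can break is $c(wu_\ell)=c(wu_2)=c_w$; the genuine obstruction is therefore the configuration $|P_C^2(w)|=1$ together with $c(w,P_C^3(w))=c_w$ (when $P_C^3(w)\neq\emptyset$), and pinning this down precisely is what makes the remaining analysis tractable. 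Second, by the standing reduction after Proposition \ref{p2} one of $|P_C^1(w)|,|P_C^2(w)|$ is always at least $2$, so only one of the two cycles ever needs the hard argument.

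The genuine gap is that the hard case is the entire content of the lemma, and you have only a strategy statement for it, not a proof. Saying you will ``insert $w$, or $w$ together with $w'$, into $C$ so as to build an explicit proper $(\ell+1)$-cycle'' does not identify which insertions work or why the colour obstructions can always be routed around; indeed the paper's proof shows that a single family of insertions does not suffice. What the paper actually does in this case is: having established that $C_{u_1}$ is proper, it observes $u_1,w'\in W_3(C_{u_1})$ and applies Lemma \ref{lemma1} to the new cycle $C_{u_1}$ with $w'$ as the external vertex, then splits according to whether $C_{u_2'}$ or $C_{u_1'}$ is proper and according to the value of $c(wu_2')$ (resp.\ $c(wu_1')$) relative to $c_w$ and $c(wu_\ell)$. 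Several subcases do end with an explicit proper $(\ell+1)$-cycle through $v_1$, but others conclude by forcing $u_1'=v_1$ or $u_2'=v_1$, or by a colour-count contradiction ($|\mathcal{C}(w,C)|<4$ against Lemma \ref{lemma4}), or by producing a forbidden joint monochromatic triangle. None of these mechanisms is visible in your outline, so as written the proposal does not establish the lemma in the only case where it is not immediate.
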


\begin{proof}
Let $w$ be a vertex such that $wv_1v_i$ is not monochromatic for $i=2,\ell$, and $w'$ be a distinct vertex.
We again relabel the vertices of $C$ depending on $w'\in W_3(C)$ by $u'_1u'_2\cdots u'_\ell$
in a clockwise direction such that $u_1=v_{y(w')}$,
$u_b=v_{q(w')}$, $u_t=v_{z(w')}$,
$u_{s}=v_{x(w')}$ and $u_a=v_{r(w')}$.
Without loss of generality, suppose $|P_C^1(w)|\geq 2$.
Then, assume, to the contrary, that $C_{u_2}$
is not proper, that is, $|P_C^2(w)|=1$ and $c(w,P_C^3(w))=c_w$
if $P_C^3(w)\neq \emptyset$.
Then, $u_1,w'\in W_3(C_{u_1})$ and
$C_{u_1}$ is a proper cycle containing $v_1$.

In the first case we assume that $C_{u_2'}$ is proper.
Then, the coloring of $\partial(u_2',C_{u_2'})$ follows the statement in Lemma \ref{lemma1}. If $c(wu_2')=c_w$,
then from Lemma \ref{lemma4} we have $|R_{C_{u_2'}}(w)|\geq 4$.
Apparently, $u_1\notin P_{C_{u_1'}}^1(u_2')$,
and then $u_1,u_l\in P_{C_{u_1'}}^2(u_2')\cup P_{C_{u_1'}}^3(u_2')$.
Thus, $u_{\ell-1}wu_2'u_1\overrightarrow{C}u_{\ell-1}$
or $u_{\ell-2}wu_2'u_\ell\overrightarrow{C}u_{\ell-2}$
is a requested cycle, a contradiction.
If $c(wu_2')\neq c_w$, then $u_2'\in P_{C}^1(w)$ and $c(wu_2')=c(u_2'u_3')$.
When $c(wu_2')=c(wu_\ell)$, we have $w\in P_{C_{u_1}}^2(u_2')$.
Thus, $c(u_3'u_4')=c_w$.
Then, $u_4'u_2'u_3'wu_1'\overleftarrow{C}u_4'$ is a requested cycle, a contradiction.
When $c(wu_2')\neq c(wu_\ell)$, we have $w\in P_{C_{u_1}}^3(u_2')$.
Note that $c(wu_\ell')\neq c(wu_2')$; otherwise, $wu_2'u_\ell'$ is monochromatic. Since $u_\ell'wu_2'u_1'\overrightarrow{C}u_\ell'$ is a proper cycle of length $\ell+1$, we have $u_1'=v_1$.
Then, $u_1'ww'u_2'u_4'\overrightarrow{C}u_1'$ or $u_1'wu_3'u_2'u_4'\overrightarrow{C}u_1'$
is a requested cycle, a contradiction.

In the second case we assume that $C_{u_1'}$ is proper.
Then, the coloring of $\partial(u_1',C_{u_1'})$ follows the statement in Lemma \ref{lemma1}.
If $c(wu_1')=c_w$, then
it is easy to verify that there
is a requested cycle in $(G,c)$, a contradiction.
If $c(wu_1')\neq c_w$, then $u_1'\in P_{C}^1(w)$ and $c(wu_1')=c(u_1'u_2')$.
When $c(wu_1')=c(wu_\ell)$, we have $V(w\overleftarrow{C_{u_1}}u_2')\subseteq P_{C_{u_1}}^2(u_1')$.
Since $w'wu_1'u_3'\overrightarrow{C}w'$ is a proper cycle of length $\ell$, we have $u_2'=v_1$.
To avoid $w'wu_2'u_1'u_4'\overrightarrow{C}w'$ being a requested cycle,
we have $c(u_1'u_4')=c(u_1'u_2')$.
Then, $|\mathcal{C}(w,C)|<4$, a contradiction.
When $c(wu_1') \neq c(wu_\ell)$,
furthermore if $c(ww')=c(u_1'u_2')$, then $ww'u_1'$ is monochromatic.
Thus, $c(ww')=c_w$, it is easy to verify that there
is a requested cycle in $(G,c)$, a contradiction.
\end{proof}

In the following we prove an important lemma which can
transform a cycle $C$ at $w\in W_3(C)$ with $|P_C^3(w)|\geq 3$
into a new cycle $C_{u_i}$ at $u_i\in W_3(C_{u_i})$ with $|P_{C_{u_i}}^3(u_i)|\leq 1$, $i=1,2$
under the condition that there is no proper cycle of length $\ell+1$ containing $v_1$ in $(G,c)$.

\begin{lemma}\label{lemma9}
Suppose there is no proper cycle of length $\ell+1$ containing $v_1$ in $(G,c)$. Then for $w\in W_3(C)$, if $|P_C^3(w)|\geq 3$
then $u_1\in W_3(C_{u_1})$ with $|P_{C_{u_1}}^3(u_1)|\leq 1$
or $u_2\in W_3(C_{u_2})$ with $|P_{C_{u_2}}^3(u_2)|\leq 1$.
\end{lemma}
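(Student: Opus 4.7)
The plan is to argue by contradiction. Suppose that neither $u_1\in W_3(C_{u_1})$ with $|P^3_{C_{u_1}}(u_1)|\le 1$ nor $u_2\in W_3(C_{u_2})$ with $|P^3_{C_{u_2}}(u_2)|\le 1$ holds. First I would check that both $C_{u_1}=u_{\ell}wu_2\overrightarrow{C}u_{\ell-1}u_{\ell}$ and $C_{u_2}=u_1 wu_3\overrightarrow{C}u_{\ell}u_1$ are proper $\ell$-cycles containing $v_1$: the only new edges are $u_\ell w$ and $wu_2$ (respectively $u_1w$ and $wu_3$), whose colours are forced by Lemma~\ref{lemma1} to differ from the $C$-edges incident at the insertion points, and $v_1\in V(C)$ still lies on both cycles (with minor bookkeeping if $v_1\in\{u_1,u_2\}$).

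Next I would analyse $u_1$ on $C_{u_1}$; the analysis of $u_2$ on $C_{u_2}$ is entirely symmetric. The triangle $wu_1u_2$ is monochromatic of colour $c_w=c(u_1u_2)$, and the vertices $w,u_2$ are consecutive on $C_{u_1}$, so this triangle plays the role of the ``$y$-transition'' of Lemma~\ref{lemma1} for $u_1$ on $C_{u_1}$, giving $y'(u_1)=w$. Combining this with the identity $c(wu_\ell)=c(u_\ell u_1)$ inherited from $u_\ell\in P^1_C(w)$, Lemma~\ref{lemma1}(2) yields $u_\ell,w\in P^1_{C_{u_1}}(u_1)$ and $u_2\in P^2_{C_{u_1}}(u_1)$. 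Since $c(u_1u_\ell)\neq c_w=c(u_1u_2)$, the vertex $u_1$ is not single-coloured on $C_{u_1}$; and since the increasing follower pattern fails at $u_2$ and the decreasing one fails at $u_\ell$, $u_1$ is not a follower either. Hence $u_1\in W_3(C_{u_1})$, and the negated hypothesis forces $|P^3_{C_{u_1}}(u_1)|\ge 2$; symmetrically $|P^3_{C_{u_2}}(u_2)|\ge 2$.

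Under these assumptions I would exploit the arc $P_C^3(w)=\{u_{t+1},\ldots,u_{s-1}\}$, which by hypothesis contains at least three consecutive vertices. Along this arc three colour patterns coexist: $c(wu_i)\equiv c_w'$ (constant), $c(u_1u_i)$, and $c(u_2u_i)$. The hypotheses $|P^3_{C_{u_1}}(u_1)|\ge 2$ and $|P^3_{C_{u_2}}(u_2)|\ge 2$ make the latter two patterns locally constant on sub-arcs, and the forbidden $(\ell+1)$-cycle hypothesis, applied through Propositions~\ref{pro1}, \ref{pro2}, and \ref{pro3}, couples the three patterns rigidly. Finally I would construct a proper $(\ell+1)$-cycle through $v_1$ by inserting $w$ between two suitably chosen consecutive vertices of $P_C^3(w)$ and re-routing one end through $u_1$ or $u_2$ along its $P^3$-arc, in the spirit of the cycle constructions in Lemmas~\ref{lemma6} and \ref{lemma7}; the accumulated colour constraints guarantee propriety at every new edge, yielding the desired contradiction.

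The main obstacle lies in this last step: the insertion position inside $P_C^3(w)$ and the choice of re-routing partner ($u_1$ vs.\ $u_2$) must be coordinated with the relative positions of $P^3_{C_{u_1}}(u_1)$, $P^3_{C_{u_2}}(u_2)$, and the distinguished vertices $u_a,u_b$, and with whether or not $u_t,u_s$ coincide with the endpoints of these sub-arcs. I anticipate that a careful case split, analogous to those in the proofs of Lemmas~\ref{lemma6} and \ref{lemma7}, will absorb most of the formal proof.
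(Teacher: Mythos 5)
Your proposal has two genuine gaps. First, your opening step---that both $C_{u_1}$ and $C_{u_2}$ are proper $\ell$-cycles with colours ``forced by Lemma~\ref{lemma1}''---is false in general. The hypothesis of the lemma is only $|P_C^3(w)|\geq 3$, so one of $P_C^1(w)$, $P_C^2(w)$ may be a singleton; if, say, $P_C^2(w)=\{u_2\}$, then $u_3\in P_C^3(w)$ and $C_{u_2}=u_1wu_3\overrightarrow{C}u_1$ has adjacent edges $u_1w$ and $wu_3$ of colours $c_w$ and $c(w,P_C^3(w))$, which coincide whenever $c(w,P_C^3(w))=c_w$. For the same reason your identity $c(wu_\ell)=c(u_\ell u_1)$ need not hold when $|P_C^1(w)|=1$. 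The paper's proof splits precisely along this line (both parts of size at least $2$ versus one part a singleton) and, in the singleton case, works only with the one cycle among $C_{u_1},C_{u_2}$ that is actually proper.

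Second, and more seriously, the step that carries all the mathematical content is missing. Having reduced to $|P^3_{C_{u_i}}(u_i)|\geq 2$ for $i=1,2$, you defer the contradiction to an unspecified ``careful case split'' about inserting $w$ into $P_C^3(w)$ and re-routing through $u_1$ or $u_2$. But determining the colours $c(u_1u_k)$ and $c(u_2u_k)$ for $u_k$ in the interior of $P_C^3(w)$ is exactly what the paper's proof does, and it is where the difficulty lies: the paper picks an interior vertex $u_p$ with $u_{p-1},u_{p+1}\in P_C^3(w)$, runs a case analysis on $c(w,P_C^3(w))$ against $c_w$, $c(wu_3)$ and $c(wu_\ell)$, and in each case uses concrete $(\ell+1)$-cycles such as $wu_3\overrightarrow{C}u_{p-1}u_1u_2u_p\overrightarrow{C}u_\ell w$, together with the no-joint-monochromatic-triangle hypothesis, to force $c(u_1u_{p-1})=c(u_{p-1}u_{p-2})$ and $c(u_2u_{p+1})=c(u_{p+1}u_{p+2})$, which directly yields $P^3_{C_{u_i}}(u_i)=\emptyset$ (or $|P^3_{C_{u_i}}(u_i)|\leq 1$) for an appropriate $i$. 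Note also that the paper does not argue by contradiction on the disjunction: it identifies, case by case, which alternative holds. Without the explicit cycle constructions and the colour bookkeeping on $P_C^3(w)$, your outline cannot be checked and does not constitute a proof.
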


\begin{proof}
In the first case we assume that both $|P_C^1(w)|$ and $|P_C^2(w)|$ are larger than 1.
Then, according to Lemmas \ref{lemma6} (2) and \ref{lemma7},
we have $c(wu_l)\neq c(wu_3)$.
Apparently, $c_w\notin\{c(wu_\ell,c(wu_3))\}$, and then $C_{u_1}$ and $C_{u_2}$ are proper.
Since $|P_C^3(w)|\geq 3$, there is a vertex $u_p\in P_C^3(w)$
such that $u_{p-1},u_{p+1}\in P_C^3(w)$; see Figure \ref{fig2}.

\begin{figure}[htbp]
  \centering
 \scalebox{1}{\includegraphics[width=1.2in,height=1.3in]{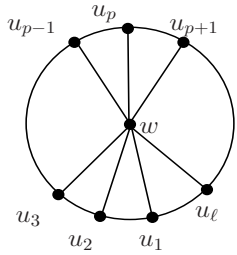}}\\
 \captionsetup{font={scriptsize}}
  \caption{$G[V(C)\cup \{w\}]$ used in the proof of Lemma 3.9.}\label{fig2}
\end{figure}

If $c(w,P_C^3(w))=c_w$,
then $c(u_iu_k)\neq c_w$ for $u_k\in P_C^3(w) $ and $i=1,2$.
It is simple to verify $c(u_1u_p)=c(u_{p+1}u_p)$
and $c(u_2u_p)=c(u_{p-1}u_p)$.
Thus, $c(u_2u_p)\notin \{c(u_pu_{p+1}),c_w\}$ and
$c(u_1u_p)\notin \{c(u_pu_{p-1}),c_w\}$
Then, to avoid $wu_3\overrightarrow{C}u_{p-1} u_1u_2u_{p}\overrightarrow{C}u_{\ell}w$
and $wu_3\overrightarrow{C}u_{p} u_1u_2u_{p+1}\overrightarrow{C}u_{\ell}w$
being requested cycles,
we have $c(u_1u_{p-1})=c(u_{p-1}u_{p-2})$
and $c(u_2u_{p+1})=c(u_{p+1}u_{p+2})$.
Hence, $u_1\in W_3(C_{u_1})$ with $P_{C_{u_1}}^3(u_1)=\emptyset $
and $u_2\in W_3(C_{u_2})$ with $P_{C_{u_2}}^3(u_2)=\emptyset$.

If $c(w,P_C^3(w))=c(wu_3)$,
then $c(w,P_C^3(w))\neq c(wu_\ell )$.
It is simple to verify $c(u_1u_p)\in\{c(u_{p-1}u_p),c(u_1u_\ell)\}\cap \{c(u_{p+1}u_p),c_w\}$
and $c(u_2u_p)\in \{c(u_{p+1}u_p),c(u_2u_3)\}$.
If $c(u_1u_{p})= c(u_{p}u_{p-1})=c_w$,
then $c(u_2u_p)\neq c_w$.
Since $C_{u_1}$ is proper, we have $c(u_1u_k)=c(u_ku_{k-1})$ for $u_k\in V(u_p\overleftarrow{C}u_2)$.
Then, $c(u_1u_3)=c(u_3u_2)\notin \{c(u_3u_4),c(u_1u_\ell)\}$.
Thus, $wu_2u_p\overleftarrow{C}u_{3}u_1\overleftarrow{C}u_{p+1}w$
is a requested cycle, a contradiction.
Hence, $c(u_1u_p)=c(u_{1}u_\ell)=c(u_{p}u_{p+1})$.
Then, $c(u_1u_p)\notin \{c_w,c(u_pu_{p-1})\}$.
Furthermore, if $c(u_2u_p)=c(u_pu_{p+1})$,
then since $C_{u_2}$ is proper,
we have $c(u_2u_k)=c(u_ku_{k+1})$ for $u_k\in V(u_p\overrightarrow{C}u_\ell)$.
Then, $c(u_2u_\ell)=c(u_\ell u_1)\notin\{c(u_2u_3),c(u_\ell u_{\ell-1})\}$.
Thus, $wu_1u_p\overleftarrow{C}u_2u_\ell\overleftarrow{C}u_{p+1}w$
is a requested cycle.
Hence, $c(u_2u_p)=c(u_2u_3)\neq c(u_pu_{p+1})$.
Then, to avoid $wu_3\overrightarrow{C}u_{p-1} u_1u_2u_{p}\overrightarrow{C}u_{\ell}w$
being a requested cycle,
we have $c(u_1u_{p-1})\in\{c_w,\\c(u_{p-1}u_{p-2})\}$.
Since $c(wu_p)\neq c_w$,
it is simple to verify  $c(u_1u_{p-1})\in \{c(u_{p-1}u_{p-2}),c(u_\ell u_{\ell-1})\}$.
Thus, $c(u_1u_{p-1})=c(u_{p-1}u_{p-2})$.
Therefore, $u_1\in W_3(C_{u_1})$ with $P_{C_{u_1}}^3(u_1)=\emptyset $.
Symmetrically, if $c(w,P_C^3(w))=c(wu_\ell)$,
we can get $u_2\in W_3(C_{u_2})$ with $P_{C_{u_2}}^3(u_2)=\emptyset$.

If $c(w,P_C^3(w))\notin \{c_w,c(wu_\ell),c(wu_3)\}$.
then it is simple to verify $c(u_1u_p)\in\{c(u_{p-1}u_p),\\ c(u_1u_\ell)\}\cap \{c_w,c(u_{p+1}u_p)\}$
and $c(u_2u_p)\in\{c(u_{p-1}u_p),c_w)\}\cap \{c(u_{p+1}u_p),c(u_2u_3)\}$.
If $c(u_1u_p)=c(u_{p-1}u_p)=c_w$, then $c(u_2u_p)=c_w$.
Hence, $(G,c)$ has joint monochromatic triangles, a contradiction.
Thus, $c(u_1u_p)=c(u_1u_\ell)=c(u_{p+1}u_p)$
and $c(u_2u_p)=c(u_{p-1}u_p)=c(u_2u_3)$.
Then, $c(u_1u_p)\notin\{c(u_{p-1}u_p), c_w\}$.
Thus, we have $c(u_2u_{p+1})\in\{c_w,c(u_{p+1}u_{p+2})\}$;
otherwise, $wu_3\overrightarrow{C}u_p u_1u_2u_{p+1}\overrightarrow{C}u_{\ell}w$ is a requested cycle.
Since $c(wu_p)\neq c_w$,
it is simple to verify  $c(u_2u_{p+1})\in \{c(u_{p+1}u_{p+2}),c(u_2u_{3})\}$.
Thus, $c(u_2u_{p+1})=c(u_{p+1}u_{p+2})$.
Symmetrically,
we have $c(u_1u_{p-1})=c(u_{p-1}u_{p-2})$.
Since both $C_{u_1}$ and $C_{u_2}$ are proper,
we have $u_1\in W_3(C_{u_1})$ with $P_{C_{u_1}}^3(u_1)=\emptyset$
and $u_2\in W_3(C_{u_2})$ with $P_{C_{u_2}}^3(u_2)=\emptyset$.

Thus the cycle $C$ with a vertex $w\in W_3(C)$ which is of $|P_C^2(w)|\geq 2$ and $|P_C^3(w)|\geq 3$ can
be changed into another cycle $C_{u_i}$ with
$u_i\in W_3(C_{u_i})$ and $P_{C_{u_i}}^3(u_i)=\emptyset$, $i=1$ or $2$; see Figures \ref{fig2} and \ref{fig3}.

\begin{figure}[htbp]
  \centering
 \scalebox{1}{\includegraphics[width=4.5in,height=1.3in]{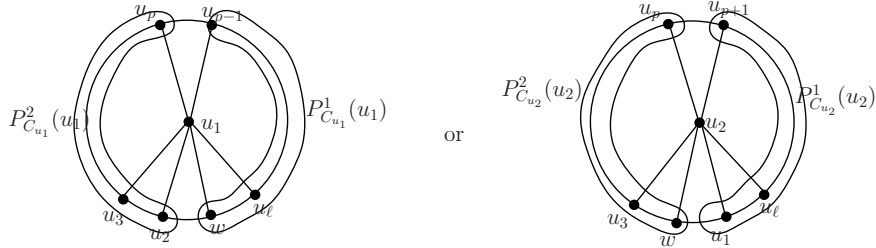}}\\
 \captionsetup{font={scriptsize}}
  \caption{$G[V(C_{u_i})\cup \{u_i\}]$ used in the proof of Lemma 3.9.}\label{fig3}
\end{figure}

In the second case we assume that either $|P_C^1(w)|$ or $|P_C^2(w)|$ is 1.
Without loss of generality, suppose $|P_C^2(w)|=1$.
Then, $C_{u_1}$ is proper.
Since $|P_C^3(w)|\geq 3$, $u_3,u_4,u_5\in P_C^3(w)$.
If $c(wu_3)\notin\{c_w,c(wu_\ell)\}$,
we can get $u_1\in W_3(C_{u_1})$ with $P_{C_{u_1}}^3(u_1)=\emptyset$.
If $c(wu_3)=c_w$,
then it is simple to verify $c(u_1u_4)=c(u_4u_5)$.
Thus, $u_1\in W_3(u_1)$ with $P_{C_{u_1}}^3\subseteq \{u_3\}$.
If $c(wu_3)=c(wu_\ell)$, then $C_{u_2}$ is proper.
To avoid $wu_3u_2u_4\overleftarrow{C}u_1w$
being a requested cycle,
we have $c(u_2u_4)\in\{c(u_{3}u_2),c(u_4u_5)\}$.
Once $c(u_2u_4)=c(u_4u_5)$,
we get $u_2\in W_3(C_{u_2})$ with $P_{C_{u_2}}^3\subseteq \{u_3\}$.
Once $c(u_2u_4)=c(u_{3}u_2)\neq c(u_4u_5)$,
then $u_3\in P_{C_{u_2}}^3(u_2)$.
Thus, $u_\ell\in P_{C_{u_2}}^1(u_2)$.
It is easy to verify $c(u_2u_3)\notin\{c_w,c(u_2u_\ell)\}$.
Then, we can get $u_1\in W_3(C_{u_1})$ with $P_{C_{u_1}}^3(u_1)=\emptyset $.

Thus the cycle $C$ with a vertex $w\in W_3(C)$ which is of $|P_C^2(w)|=1$ and $|P_C^3(w)|\geq 3$ can
be changed into another cycle $C_{u_i}$ with
$u_i\in W_3(C_{u_i})$ and $|P_{C_{u_i}}^3(u_i)|\leq 1$, $i=1$ or $2$;
see Figures \ref{fig4} and \ref{fig5}.

\end{proof}
\begin{figure}[htbp]
  \centering
 \scalebox{1}{\includegraphics[width=1.2in,height=1.1in]{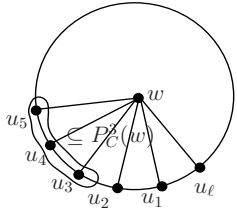}}\\
 \captionsetup{font={scriptsize}}
  \caption{$G[V(C)\cup \{u\}]$ used in the proof of Lemma 3.9.}\label{fig4}
\end{figure}

\begin{figure}[htbp]
  \centering
 \scalebox{1}{\includegraphics[width=4.0in,height=1.4in]{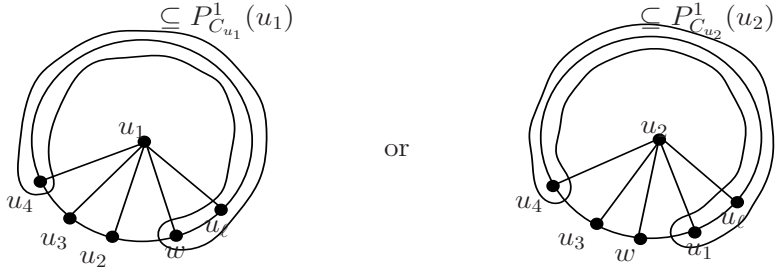}}\\
 \captionsetup{font={scriptsize}}
  \caption{$G[V(C_{u_i})\cup \{u_i\}]$ used in the proof of Lemma 3.9.}\label{fig5}
\end{figure}

From Lemmas \ref{lemma8} and \ref{lemma9},
we get the following important corollary.

\begin{coro}\label{coro1}
Suppose there is no proper cycle of length $\ell+1$ containing $v_1$ in $(G,c)$. Then, $|W_3(C)|\leq 3$.
\end{coro}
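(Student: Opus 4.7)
The plan is to argue by contradiction: assume $|W_3(C)|\ge 4$ and combine Lemma \ref{lemma8} with Lemma \ref{lemma9} to reach a contradiction. First, for each $w\in W_3(C)$ the monochromatic triangle $wv_{y(w)}v_{y(w)+1}$ contributes the edge $e_w=v_{y(w)}v_{y(w)+1}$ on $C$, and because $(G,c)$ contains no joint monochromatic triangles, the edges $\{e_w:w\in W_3(C)\}$ have pairwise disjoint endpoints on $C$. In particular at most one $w\in W_3(C)$ can satisfy $v_1\in\{v_{y(w)},v_{y(w)+1}\}$, so at least three of the four assumed vertices of $W_3(C)$ satisfy the hypothesis of Lemma \ref{lemma8}, and for each such $w$ both $C_{u_1}$ and $C_{u_2}$ are again proper $\ell$-cycles through $v_1$ that admit no proper $(\ell+1)$-cycle through $v_1$.

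Next I would apply Lemma \ref{lemma9} iteratively: whenever a vertex $w$ of the current $W_3$ has $|P_C^3(w)|\ge 3$, replace the current cycle by whichever of $C_{u_1}, C_{u_2}$ is furnished by Lemma \ref{lemma9}, so that the swapped-out vertex $u_i\in\{v_{y(w)},v_{y(w)+1}\}$ lands in $W_3$ of the replacement cycle with $|P^3|\le 1$. Keeping track of how the untouched vertices $w_j\in W_3(C)\setminus\{w\}$ migrate between $W_1,W_2,W_3$ of the new cycle, using the colors $c(w_j,w)$ and the vertex-disjoint-hotspots property, the goal is to arrive at a cycle $C^*$ of length $\ell$ through $v_1$ whose $W_3(C^*)$ still has at least four elements, each now with $|P_{C^*}^3|\le 2$ and at least two with $|P_{C^*}^3|\le 1$.

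Once the structure is this rigid, the dependence and independence relations of Lemmas \ref{lemma5}, \ref{lemma6}, \ref{lemma7}, together with the lower bound $|\mathcal{C}(w,V(C^*))|\ge 3$ from Lemma \ref{lemma4} and the pigeonholing of Fact \ref{fact1} applied to the $DP_w$-sets, force the color degree of some vertex $v\in V(G)$ below $\frac{n+1}{2}$, which contradicts the hypothesis $\delta^c(G)\ge \frac{n+1}{2}$. The main obstacle is the bookkeeping in the iterative reduction: one has to verify that each swap $C\to C'$ preserves $|W_3|\ge 4$ (and the other cycle invariants). Here the no-joint-monochromatic-triangles hypothesis is indispensable, since it forces the monochromatic hotspots of distinct $w_j$'s to occupy vertex-disjoint edges of $C$, which tightly restricts how replacing a single vertex of $C$ by some $w$ can disturb the classification of the other vertices off $C$.
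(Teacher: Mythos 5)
Your setup is fine and matches the paper's: the distinguished monochromatic edges $v_{y(w)}v_{y(w)+1}$ of distinct $w\in W_3(C)$ are pairwise vertex-disjoint, so at most one $w$ violates the hypothesis of Lemma \ref{lemma8}, and when $|W_3(C)|\geq 4$ one can choose a $w$ with $wv_1v_2$ and $wv_1v_\ell$ not monochromatic and with $|P_C^3(w)|\geq 4$. From there, however, your plan and the paper part ways, and your route has a genuine gap. The paper applies Lemma \ref{lemma9} \emph{once}, to that single $w$, and obtains the contradiction purely by counting $W_3$ itself: since $V(C_{u_1})$ differs from $V(C)$ only by exchanging $w$ for $u_1$, one gets $W_3(C_{u_1})=W_3(C)\cup\{u_1\}\setminus\{w\}$ and hence $|W_3(C_{u_1})|\geq 4$, while $u_1\in W_3(C_{u_1})$ with $|P^3_{C_{u_1}}(u_1)|\leq 1$ forces $|W_3(C_{u_1})|\leq 3$. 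No color-degree estimate is involved.

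Your endgame, by contrast, is not an argument. Saying that Lemmas \ref{lemma4}--\ref{lemma7} and Fact \ref{fact1} ``force the color degree of some vertex below $\frac{n+1}{2}$'' identifies neither the vertex, nor the $DP$-set being pigeonholed, nor where the assumption $|W_3(C^*)|\geq 4$ actually enters the count; and the machinery that does extract low-color-degree vertices from such configurations (Lemmas \ref{lemma10}--\ref{lemma13} and Proposition \ref{p5}) is developed \emph{after} Corollary \ref{coro1} and invokes it (both Lemma \ref{lemma13} and Proposition \ref{p5} rely on $|W_3(C)|\leq 3$), so routing the proof through that machinery would be circular. The iterative reduction is also problematic: Lemma \ref{lemma9} controls $|P^3|$ only for the newly inserted vertex $u_i$ relative to the new cycle, not for the other members of $W_3$, whose partitions $P^1,P^2,P^3$ must be recomputed with respect to the new cycle; you give no invariant showing the process terminates with every member having $|P^3|\leq 2$, and you explicitly leave the preservation of $|W_3|\geq 4$ as an unresolved ``obstacle''---which is precisely the one identity ($W_3(C_{u_1})=W_3(C)\cup\{u_1\}\setminus\{w\}$) the paper needs, and needs only once, to close the proof immediately.
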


\begin{proof}
First, we prove a claim: For a vertex $w$ in $W_3(C)$ such that
$wv_1v_i$ is not monochromatic for $i=2,\ell$,
we have $u_1\notin P_C^1(w')$ and $u_2\notin P_C^2(w')$ for any distinct $w'\in W_3(C)$.
According to Lemma \ref{lemma8}, we know that $C_{u_1}$ and $C_{u_2}$
are proper cycles of length $\ell$ containing $v_1$.
If $u_1\in P_C^1(w')$, then $u_3\in P_C^1(w')$.
Thus, $c(ww')=c(wu_3)$.
According to Lemma \ref{lemma8} again,
we know that $c(u_1u_3)\notin\{c(wu_1),c(u_3u_4)\}$.
Thus, $ww'u_1u_3\overrightarrow{C}w$
is proper cycle of length $\ell+1$ containing $v_1$, a contradiction.
Thus, $u_1\notin P_C^1(w')$.
In a similar way, we can get $u_2\notin P_C^2(w')$.

Suppose, to the contrary, that $|W_3(C)|\geq 4$.
Since $(G,c)$ has no joint monochromatic triangles,
there exists a vertex $w\in W_3(C)$ with $|P_C^3(w)|\geq 4$
such that $wv_1v_i$, $i=2, \ell,$ is not monochromatic; see Figure \ref{fig6}.

\begin{figure}[htbp]
  \centering
 \scalebox{1}{\includegraphics[width=1.2in,height=1.3in]{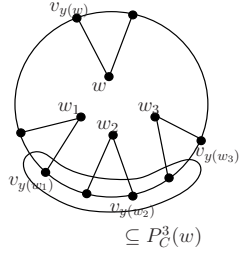}}\\
 \captionsetup{font={scriptsize}}
  \caption{A cycle $C$ with $|W_3(C)|\geq 4$ used in the proof of Corollary 3.1.}\label{fig6}
\end{figure}

According to Lemma \ref{lemma9},
without loss of generality,
suppose $u_1\in W_3(C_{u_1})$ with $|P_{C_{u_1}}^3(u_1)|\leq 1$.
Then, $|W_3(C_{u_1})|\leq 3$.
Since $V(C_{u_1})\cap V(C)=V(u_2\overrightarrow{C}u_\ell)$,
we have $W_3(C_{u_1})=W_3(C)\cup \{u_1\}\setminus \{w\}$,
that is, $|W_3(C_{u_1})|\geq 4$, a contradiction.
\end{proof}

\begin{lemma}\label{lemma10}
Suppose there is no proper cycle of length $\ell+1$ containing $v_1$ in $(G,c)$. Then for $w\in W_3(C)$ with $|P_C^3(w)|\leq 2$,
if both $|P_C^1(w)|$ and $|P_C^2(w)|$ are larger than 1,
there is a vertex set $V_1$ of size $\ell-4$
such that $c(wu)=c(uu_1)=c(uu_2)$ for $u\in V_1$.
\end{lemma}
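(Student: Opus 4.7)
The plan is to take $V_1 := V(C)\setminus\{u_1,u_2,u_3,u_\ell\}$, a set of size $\ell-4$, and show that every $u_i\in V_1$ (so $4\le i\le \ell-1$) satisfies $c(wu_i)=c(u_iu_1)=c(u_iu_2)$. The exclusion of $u_1,u_2$ is forced, since the edges $uu_1,uu_2$ must exist, and the two $C$-neighbours $u_3,u_\ell$ of the monochromatic pair $u_1u_2$ are natural boundary cases in which the surgery cycles below would be forced to reuse the color $c_w$ on two adjacent edges.

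For each $u_i\in V_1$ I would consider two ``test'' cycles of length $\ell+1$,
\begin{align*}
C^\star(i) &= w\,u_2\,u_3\,\overrightarrow{C}\,u_{i-1}\,u_1\,u_i\,\overrightarrow{C}\,u_\ell\,w,\\
C^{\star\star}(i) &= w\,u_3\,\overrightarrow{C}\,u_{i-1}\,u_2\,u_1\,u_i\,\overrightarrow{C}\,u_\ell\,w,
\end{align*}
each spanning $V(C)\cup\{w\}$ and hence containing $v_1$. Because $C$ is proper and $c(wu_1)=c(wu_2)=c_w=c(u_1u_2)$, the non-existence of a proper $(\ell+1)$-cycle through $v_1$ forces one of a short list of color coincidences at the new adjacencies in each test cycle (which involve only $u_1$ or $u_2$ together with $u_{i-1},u_i$ and their $C$-neighbours). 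Combining these constraints with the $DP_w$-information of Lemma~\ref{lemma6} (applicable because $Q_C(w)=P_C^2(w)$ by the standing convention set after Proposition~\ref{p3}, and because $R_C(w)=P_C^1(w)$ when $|P_C^3(w)|=2$ by Lemma~\ref{lemma5}) and with Lemma~\ref{lemma7} whenever $P_C^3(w)\neq\emptyset$, both $c(u_1u_i)$ and $c(u_2u_i)$ are pinned down to the two-element set $\{c_w,\,c(wu_i)\}$.

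The no-joint-monochromatic-triangle hypothesis now rules out $c(u_1u_i)=c(u_2u_i)=c_w$, since then $u_1u_2u_i$ would be a monochromatic triangle joint with $wu_1u_2$ along the edge $u_1u_2$. To eliminate the remaining two ``mixed'' possibilities in which exactly one of $c(u_1u_i),c(u_2u_i)$ equals $c_w$, I would build a third surgery cycle tailored to the part of $P_C^1(w)\cup P_C^2(w)\cup P_C^3(w)$ containing $u_i$: this cycle uses the non-$c_w$ edge at $u_i$ on its $w$-free side and routes $w$ through a position that makes each mixed case produce a proper $(\ell+1)$-cycle through $v_1$, a contradiction. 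What remains is $c(u_1u_i)=c(u_2u_i)=c(wu_i)$, as required.

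The main obstacle is precisely the elimination of the mixed cases: a single test cycle typically admits several forced coincidences, so ruling out all but the desired one requires chaining several cycle constructions and propagating equalities around $C$. The argument is especially delicate when $u_i$ lies in or is adjacent to $P_C^3(w)$ (so that $c(wu_i)$ is not of the form $c(u_iu_{i\pm 1})$) and when $|P_C^3(w)|=0$ (so $R_C(w)$ may be strictly smaller than $P_C^1(w)$, weakening the $DP_w$-information at one boundary of $P_C^1(w)$).
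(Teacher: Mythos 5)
Your proposal is a plan rather than a proof, and its two concrete commitments are both problematic. First, the choice $V_1=V(C)\setminus\{u_1,u_2,u_3,u_\ell\}$ is not the right exceptional set. The paper's proof (and the structure given by Lemma~\ref{lemma1}) shows that the vertices which may have to be discarded besides $u_1,u_2$ are the \emph{far} endpoints $u_s=v_{x(w)}$ of $P_C^1(w)$ and $u_t=v_{z(w)}$ of $P_C^2(w)$ (and, when $R_C(w)\subsetneq P_C^1(w)$, a vertex near $u_a$): at these boundary vertices one of $c(u_2u_s)$, $c(u_1u_t)$ can equal $c_w$ without producing a usable $(\ell+1)$-cycle, and then the asserted equality, e.g.\ $c(u_1u_t)=c(wu_t)=c(u_tu_{t-1})$, simply fails. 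Since $u_s$ and $u_t$ need not coincide with $u_3$ or $u_\ell$, your $V_1$ can contain a vertex at which the conclusion is false, so the set itself must be chosen case by case as the paper does.

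Second, the pivotal assertion that $C^\star(i)$, $C^{\star\star}(i)$ together with Lemmas~\ref{lemma6} and \ref{lemma7} pin $c(u_1u_i)$ and $c(u_2u_i)$ into $\{c_w,c(wu_i)\}$ is exactly the hard part and does not follow from the test cycles as stated: the non-properness of $C^\star(i)$ only yields the disjunction ``$c(u_1u_{i-1})\in\{c(u_{i-1}u_{i-2}),c(u_1u_i)\}$ or $c(u_1u_i)=c(u_iu_{i+1})$'', in which two of the three alternatives constrain the edge $u_1u_{i-1}$ rather than $u_1u_i$; moreover for $u_i\in P_C^2(w)$ the surviving clash gives $c(u_1u_i)=c(u_iu_{i+1})$, which is \emph{not} $c(wu_i)=c(u_iu_{i-1})$, so the cycle is oriented the wrong way for half the vertices. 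Converting these disjunctions into the claimed containment requires an induction along $C$ anchored at a vertex whose colour is known --- this is what the paper does by first proving the preliminary claim that $c(u_iu_2)\neq c_w$ and $c(u_ju_1)\neq c_w$ on a set of size $\ell-4$ and then propagating equalities along the proper cycles $C_{u_1}$, $C_{u_2}$ --- and it forces the case split on $c(wu_\ell)=c(wu_3)$ versus not and on $|P_C^3(w)|$ that your sketch omits entirely; the ``third surgery cycle'' meant to kill the mixed cases is never exhibited. The one fully correct ingredient is the use of the no-joint-monochromatic-triangle hypothesis to exclude $c(u_1u_i)=c(u_2u_i)=c_w$ via the triangle $u_1u_2u_i$ sharing the edge $u_1u_2$ with $wu_1u_2$, but as it stands the argument has essential gaps.
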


\begin{proof}
In the first case suppose $c(wu_\ell)=c(wu_3)$.
When $\ell=5$, the result follows apparently.
In the following assume $\ell\geq 6$.
From Lemmas \ref{lemma4} and \ref{lemma7},
we get $P^3_C(w)=\emptyset$.
Let $C_1=wu_\ell u_3\overleftarrow{C}u_1u_4\overrightarrow{C}u_{\ell-1}w$,
$C_2=wu_2u_3u_\ell u_1u_4\overrightarrow{C}u_{\ell-1}w$,
$C_3=wu_3u_\ell\overleftarrow{C}u_2u_4\overrightarrow{C}u_{\ell-1}w$
and $C_4wu_1u_\ell u_3u_2u_4\overrightarrow{C}u_{\ell-1}w$
be cycles of length $\ell+1$ containing $v_1$.
We might as well suppose $u_b=u_3$.
(Note that when $R_C(w)\subsetneq P_C^1(w)$,
we have $u_a=u_\ell$; otherwise, $C_1$ is proper.)
Then, $|R_C(w)|\geq 3$.
To avoid $C_1$ and $C_2$ being proper,
we have $c(u_1u_4)\in \{c_w,c(u_4u_5)\}\cap \{c(u_1u_{\ell-1}),c(u_4u_5)\}$.
Then, $c(u_1u_4)=c(u_4u_5)$.
Since $C_{u_1}$ is proper, we have $c(u_1u_k)=c(u_ku_{k+1})$ for $u_k\in V(u_4\overrightarrow{C}u_\ell)$.
Then, $c(wu_k)\neq c(wu_1)$ for $u_k\in V(u_4\overrightarrow{C}u_\ell)$.
To avoid $C_3$ being proper,
we have $c(u_2u_4)\in \{c_w,c(u_4u_5)\}$.
Thus, $c(u_2u_4)=c(u_4u_5)$;
otherwise, $C_4$ is proper.
Since $C_{u_2}$ is proper,
we have $c(u_2u_k)=c(u_ku_{k+1})$ for $u_k\in V(u_4\overrightarrow{C}u_\ell)$.
The result follows as $V_1=V(u_4\overrightarrow{C}u_\ell)$.

In the second case suppose $c(wu_\ell)\neq c(wu_3)$.
First, we prove a claim:
There is a vertex set $V_1$ of size $\ell-4$
such that
$c(u_iu_2)\neq c_w$ for $u_i\in P_C^1(w)\cap V_1$
and $c(u_ju_1)\neq c_w$ for $u_j\in P_C^2(w)\cap V_1$.
Apparently, the claim holds for $u\in R_C(w)\cup Q_C(w)\setminus \{u_a,u_b,u_1,u_2\}$.
Let $V_0=R_C(w)\cup Q_C(w)\setminus \{u_1,u_2\}$.
If $R_C(w)=P_C^1(w)$,
then the claim follows apparently
when $P_C^3(w)=\emptyset$ or $c(w,P_C^3(w))\neq c(wu_1)$.
While when $|P_C^3(w)|=1$ and $c(w,P_C^3(w))= c(wu_1)$,
suppose, to the contrary, that
$c(u_su_2)=c(u_tu_1)=c_w$.
Then, $c(u_2u_{t+1})\neq c_w$.
Thus, $wu_{t+1}u_2u_s\overrightarrow{C}u_1u_t\overleftarrow{C}u_3w$ is
a requested cycle, a contradiction.
Hence, at least one of $c(u_2u_s)$ and $c(u_1u_t)$ is not $c_w$.
Therefore, the claim follows as $V_1=V_0\setminus \{u_s\}$ or $V_1=V_0\setminus \{u_t\}$.
When $|P_C^3(w)|=2$ and $c(w,P_C^3(w))= c_w$,
since $c(wu_3)\neq c(wu_{t+2})$,
we have $c(u_2u_{t+1})\in \{c_2,c(u_tu_{t+1})\}$.
Thus, $c(u_2u_{t+1})\neq c(u_{t+1}u_{t+2})$.
Then, $c(u_tu_1)\neq c_w$;
otherwise, $wu_2u_{t+1}\overrightarrow{C}u_1u_t\overrightarrow{C}u_3w$
is a requested cycle.
Therefore, the claim follows as $V_1=V_0$.
If $R_C(w)\varsubsetneq P_C^1(w)$,
according to Lemma \ref{lemma5} we know $|P_C^3(w)|\leq 1$.
When $c(wu_{a-1})\neq c(wu_1)$, the claim follows as
$V_1=V_0\setminus \{u_s,u_{s-1}\}$.
When $c(wu_{a-1})= c(wu_1)$, we can get an assertion
that to avoid $wu_2u_{a-1}\overrightarrow{C}u_1u_{a-2}\overleftarrow{C}u_3w$
and $wu_l\overleftarrow{C}u_{a-1}u_2u_1u_{a-2}\overleftarrow{C}u_3w$
being requested cycles,
we have $c(u_1u_{a-2})\in \{c_w,c(u_{a-2}u_{a-3})\}\cap \{c(u_1u_l),c(u_{a-2}u_{a-3})\}$.
In the following we prove the assertion that $|P_C^1(w)\cup P_C^3(w)\setminus R_C(w)|\leq 1$.
If not, since $c(wu_{a-2})=c(wu_\ell)$,
we have $c(u_1u_\ell)\neq c(u_{a-2}u_{a-3})$.
Thus, $c(u_1u_{a-2})=c(u_{a-2}u_{a-3})$.
Since $C_{u_1}$ is proper,
we have $c(u_1u_k)=c(u_ku_{k-1})$ for $u_k\in V(u_{a-2}\overleftarrow{C}u_2)$.
Then, $c(u_1u_3)=c(u_2u_3)\notin \{c(u_3u_4),c(u_1u_\ell)\}$.
Thus,  $wu_\ell\overleftarrow{C}u_{a-1}u_2u_1u_{3}\overrightarrow{C}u_{a-2}w$
is a requested cycle, a contradiction.
The assertion thus follows.
Since $R_C(w)\varsubsetneq P_C^1(w)$, we have $P_C^3(w)=\emptyset$.
Thus, $c(wu_s)=c_w$.
Then, $c(u_1u_t)\neq c_w$;
otherwise, $wu_\ell\overleftarrow{C}u_su_2u_1u_t\overleftarrow{C}u_3w$
is a requested cycle.
Hence, the claim follows as $V_1=V_0\setminus \{u_s,u_t\}$.

Next suppose, to the contrary, that there exists a vertex $u_i\in P_C^1{w}\cap V_1$ such that $c(u_iu_2)\neq c(wu_i)$.
Since $c(u_iu_2)\neq c_w$, to avoid $wu_2u_{a-1}\overrightarrow{C}u_1u_{a-2}\overleftarrow{C}u_3w$
and $wu_\ell\overleftarrow{C}u_{a-1}\\u_2u_1u_{a-2}\overleftarrow{C}u_3w$
being requested cycles,
we have $c(u_1u_{i-1})=c(u_{i-1}u_{i-2})$.
Since $C_{u_1}$ is proper,
we have $c(u_1u_k)=c(u_ku_{k-1})$ for $u_k\in V(u_{i-1}\overleftarrow{C}u_2)$.
In a similar way,
we can get $c(u_1 u_j)=c(wu_j)$ for $u_j\in P_C^2(w)\cap V_1$.
Then, $u_2u_1u_3\overrightarrow{C}u_lu_2$ is a proper cycle of length $\ell$ containing $v_1$.
Then, by repeating this as above,
we can get that $c(u_iu_1)=c(wu_i)$ for $u_i\in P_C^1(w)\cap V_1$
and $c(u_2 u_j)=c(wu_j)$ for $u_j\in P_C^2(w)\cap V_1$.
The result thus follows.
\end{proof}

\begin{lemma}\label{lemma11}
Suppose there is no proper cycle of length $\ell+1$ containing $v_1$ in $(G,c)$.
For $w\in W_3(C)$ with $|P_C^2(w)|=1$ and $|P_C^3(w)|\geq 3$,
if both $c(w,P_C^3(w))=c_w$ and $c(u_1u_3)\neq c(u_2u_3)$,
then $c(u_2u_k)=c(u_3u_k)=c(u_ku_{k+1})$ for $u_k\in V(u_5\overrightarrow{C}u_\ell)$.
\end{lemma}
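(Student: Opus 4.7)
My strategy is to argue by contradiction using the standing hypothesis that no proper cycle of length $\ell+1$ contains $v_1$: every candidate $(\ell+1)$-cycle I construct will either produce a contradiction directly or pin down one of the desired color equalities.

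First I will extract precise information about the chords from $u_1$ to $V(C)$. The cycle $C_{u_1}=u_\ell w u_2 u_3\cdots u_{\ell-1}u_\ell$ is a proper $\ell$-cycle, since $c(u_\ell w)=c_\ell\neq c_w=c(wu_2)$ and $c(wu_2)=c_w\neq c_2=c(u_2u_3)$, and all remaining edges are those of $C$. Assuming $v_1\neq u_1$ (the other case being reducible to this via Lemma~\ref{lemma8}), $C_{u_1}$ is a proper $\ell$-cycle containing $v_1$. Applying Lemma~\ref{lemma1} to $u_1$ with respect to $C_{u_1}$, the equalities $c(u_1u_2)=c(u_1w)=c(wu_2)=c_w$ together with the given $c(u_1u_3)\neq c_2$ force the ``$y$-vertex'' of $u_1$ on $C_{u_1}$ to be $w$, so $P^2_{C_{u_1}}(u_1)=\{u_2\}$ and $u_3\in P^3_{C_{u_1}}(u_1)$. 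By Lemma~\ref{lemma1}(2), $c(u_1u_j)=c(u_1u_3)$ for every $u_j\in P^3_{C_{u_1}}(u_1)$ and $c(u_1u_j)=c_j$ for every $u_j\in P^1_{C_{u_1}}(u_1)\setminus\{w\}$, so each chord color $c(u_1u_j)$ with $3\le j\le\ell-1$ is pinned to one of two explicit values.

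Next, for each $k\in\{5,\ldots,\ell\}$ I will construct a candidate $(\ell+1)$-cycle through $v_1$ of the schematic form
\[
w\,u_3\,u_4\,\cdots\,u_{k-1}\,u_1\,u_2\,u_k\,u_{k+1}\,\cdots\,u_\ell\,w,
\]
which is designed to isolate $c(u_2u_k)$; a mirror cycle using the chord $u_3u_k$ in place of $u_2u_k$ isolates $c(u_3u_k)$. All edges other than the chords $u_1u_{k-1}$ and $u_2u_k$ (respectively $u_3u_k$) and the two edges incident to $w$ are $C$-edges and are therefore properly colored against their $C$-neighbors. The two $w$-edges have known colors from the hypothesis $c(w,P_C^3(w))=c_w$ and the description of $P_C^1(w)$, while $c(u_1u_{k-1})$ is pinned by Step~1. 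Joint-monochromatic-triangle avoidance rules out $c(u_2u_k)=c_w$, since otherwise $wu_2u_k$ would share $u_2$ with the monochromatic $wu_1u_2$. Hence the only remaining way the candidate can fail to be a proper $(\ell+1)$-cycle through $v_1$ is $c(u_2u_k)=c_k$, and the symmetric argument on the mirror cycle yields $c(u_3u_k)=c_k$.

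The principal obstacle is a small collection of degenerate subcases. At most one value of $k$ can have $c(u_1u_{k-1})=c(u_1u_3)=c_{k-2}$, which would make the primary candidate fail at $u_{k-1}$ rather than at $u_k$; this single $k$ must be handled with an auxiliary candidate that replaces the chord $u_1u_{k-1}$ by a different route (for instance, by swapping the roles of $u_2$ and $u_3$ or by rerouting through $u_\ell$). Additional case splits are required when $k$ or $k+1$ crosses the boundary between $P_C^3(w)$ and $P_C^1(w)$, so that the two edges incident to $w$ still lie in distinct color classes; the endpoint $k=\ell$, where $u_{k+1}=u_1$, is a further special case. In every subcase, however, the Step~1 description of $c(u_1u_j)$, combined with joint-monochromatic-triangle avoidance, collapses the analysis back to the same template above.
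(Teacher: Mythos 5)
Your overall template (use the proper cycle $C_{u_1}$ to pin the chords $c(u_1u_j)$, then build $(\ell+1)$-cycles through $v_1$ that route through $u_1u_{k-1}$ and the chord $u_2u_k$ or $u_3u_k$) is the same as the paper's, but two of your closing steps do not close. First, your exclusion of $c(u_2u_k)=c_w$ via joint monochromatic triangles only works when $u_k\in P_C^3(w)$: only then is $c(wu_k)=c_w$, making $wu_2u_k$ monochromatic and joint with $wu_1u_2$. For $u_k\in P_C^1(w)$ (and $P_C^1(w)\setminus\{u_1\}$ is nonempty here) one has $c(wu_k)=c(u_ku_{k+1})\neq c_w$, so neither $wu_2u_k$ nor $u_1u_2u_k$ is monochromatic and no contradiction arises. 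The paper closes this case with a \emph{second} candidate cycle, $wu_1u_{k-1}\overleftarrow{C}u_2u_k\overrightarrow{C}u_\ell w$, which forces $c(u_2u_k)\in\{c(u_2u_3),c(u_ku_{k+1})\}$; intersecting with your $\{c_w,c(u_ku_{k+1})\}$ and using $c(u_2u_3)\neq c_w$ finishes. Second, for the mirror cycle the surviving alternative is $c(u_3u_k)=c(u_2u_3)$, and no monochromatic triangle is forced by that equality (note $c(u_2u_3)\neq c_w$), so joint-triangle avoidance cannot dispose of it. The paper needs a further cycle, $wu_\ell\overleftarrow{C}u_ku_3u_1u_2u_{k-1}\overleftarrow{C}u_4w$, to kill exactly this case; your proposal has no substitute for it.

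There is also a structural shortcut you are missing that would eliminate your ``small collection of degenerate subcases,'' which as written are not actually resolved (the claim that at most one $k$ has $c(u_1u_{k-1})=c(u_1u_3)$ is unjustified: if $P^3_{C_{u_1}}(u_1)$ were large, many such $k$ could occur, since a single color may appear on many edges of a proper cycle). The paper first shows $c(u_1u_4)=c(u_4u_5)$ directly: the cycle $wu_3u_2u_1u_4\overrightarrow{C}u_\ell w$ has length $\ell+1$, contains $v_1$, and can only fail at the chord $u_1u_4$, forcing $c(u_1u_4)\in\{c_w,c(u_4u_5)\}$, while $c(u_1u_4)=c_w$ would make $wu_1u_4$ monochromatic and joint with $wu_1u_2$. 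Propagating along the proper cycle $C_{u_1}$ then gives $c(u_1u_k)=c(u_ku_{k+1})$ for all $u_k\in V(u_4\overrightarrow{C}u_\ell)$, i.e.\ $P^3_{C_{u_1}}(u_1)\subseteq\{u_3\}$, so no degenerate values of $k$ exist and every chord $u_1u_{k-1}$ behaves uniformly. (Separately, you should verify $u_1\in W_3(C_{u_1})$ before invoking Lemma~\ref{lemma1}, and your appeal to Lemma~\ref{lemma8} for the case $v_1=u_1$ is inapplicable, since that lemma assumes $wv_1v_2$ is not monochromatic, whereas here $wu_1u_2$ is precisely the monochromatic triangle; fortunately when $v_1=u_1$ all the constructed cycles contain $v_1$ anyway.)
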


\begin{proof}
Since $c(wu_3)\neq c(wu_\ell)$, we have
$c(u_1u_4)=c(u_4u_5)$.
From \ref{lemma4} we know that $C_{u_1}$ is proper.
Thus, $c(u_1u_k)=c(u_ku_{k+1})$ for $u_k\in V(u_4\overrightarrow{C}u_\ell)$.
Hence, $c(u_1u_k)\notin \{c(u_ku_{k-1}),c_w\}$ for $u_k\in V(u_4\overrightarrow{C}u_\ell)$.
Since for $u_k\in V(u_5\overrightarrow{C}u_\ell)$,
$wu_1u_{k-1}\overleftarrow{C}u_2u_k\overrightarrow{C}u_\ell w,$
$wu_3\overrightarrow{C}u_{k-1}u_1u_2u_k\overrightarrow{C}u_\ell w$
and $wu_4\overrightarrow{C}u_{k-1}u_1\overrightarrow{C}u_3u_k
\overrightarrow{C}u_\ell w$
are of length $\ell+1$ and contain $v_1$,
we have $c(u_2u_k)\in \{c(u_2u_3),c(u_ku_{k+1})\}\cap \{c_w,c(u_ku_{k+1})\}$
and $c(u_3u_k)\in \{c(u_2u_3),c(u_ku_{k+1})\}$.
Thus, apparently we have $c(u_2u_k)=c(u_ku_{k+1})$ for $u_k\in V(u_5\overrightarrow{C}u_\ell)$.
While if $c(u_3u_k)=c(u_2u_3)\neq c(u_ku_{k+1})$,
then $c(u_1u_3)\notin\{c_w,c(u_3u_{k})\}$.
Thus, $wu_\ell\overleftarrow{C}u_ku_3u_1u_2u_{k-1}\overleftarrow{C}u_4w$
is a requested cycle, a contradiction.
The result thus follows.
\end{proof}

\begin{lemma}\label{lemma12}
Suppose there is no proper cycle of length $\ell+1$ containing $v_1$ in $(G,c)$. Then for $w\in W_3(C)$ with $|P_C^2(w)|=1$ and $|P_C^3(w)|=2$,
we have $c(u_1u_k)=c(u_2u_k)=c(u_ku_{k+1})$ for $u_k\in V(u_5\overrightarrow{C}u_\ell)$.
\end{lemma}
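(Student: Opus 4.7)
The hypotheses $|P_C^2(w)|=1$ and $|P_C^3(w)|=2$ combined with Lemma~\ref{lemma1} pin down the picture: $P_C^2(w)=\{u_2\}$, $P_C^3(w)=\{u_3,u_4\}$, so $u_s=u_5$ and $P_C^1(w)=\{u_5,\ldots,u_\ell,u_1\}$. Writing $c_w:=c(u_1u_2)$ and $\alpha:=c(wu_3)=c(wu_4)$, Lemma~\ref{lemma1} gives $c(wu_1)=c(wu_2)=c_w$, $\alpha\notin\{c(u_2u_3),c(u_3u_4),c(u_4u_5)\}$, and $c(wu_j)=c(u_ju_{j+1})$ for every $u_j\in P_C^1(w)$. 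A direct check shows that the $\ell$-cycle $C_{u_1}=wu_2u_3\cdots u_\ell w$ is always proper, and $C_{u_2}=u_1wu_3u_4\cdots u_\ell u_1$ is proper iff $\alpha\ne c_w$; when $v_1\notin\{u_1,u_2\}$ the hypothesis ``no proper $(\ell+1)$-cycle through $v_1$'' passes to them.

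The main argument mirrors the intersection trick from the proof of Lemma~\ref{lemma11}. For each $u_k\in V(u_5\overrightarrow{C}u_\ell)$ I consider three $(\ell+1)$-cycles through $v_1$:
\begin{align*}
\Gamma^{(1)}_k &= wu_1u_{k-1}\overleftarrow{C}u_2u_k\overrightarrow{C}u_\ell w,\\
\Gamma^{(2)}_k &= wu_3\overrightarrow{C}u_{k-1}u_1u_2u_k\overrightarrow{C}u_\ell w,\\
\Gamma^{(3)}_k &= wu_4\overrightarrow{C}u_{k-1}u_1\overrightarrow{C}u_3u_k\overrightarrow{C}u_\ell w.
\end{align*}
Reading consecutive-edge colours, under $\alpha\notin\{c_w,c(u_\ell u_1)\}$ the failure-to-be-proper of $\Gamma^{(1)}_k$ and $\Gamma^{(2)}_k$ forces $c(u_2u_k)\in\{c(u_2u_3),c(u_ku_{k+1})\}$ and $c(u_2u_k)\in\{c_w,c(u_ku_{k+1})\}$ respectively, once the alternative $c(u_1u_{k-1})\in\{c_w,c(u_{k-2}u_{k-1})\}$ has been discarded. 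Since $c(u_2u_3)\neq c_w$ by properness of $C$, intersecting the two sets gives $c(u_2u_k)=c(u_ku_{k+1})$. The companion equality $c(u_1u_k)=c(u_ku_{k+1})$ follows either from the symmetric triple (obtained by swapping $u_1\leftrightarrow u_2$ and reversing orientation) or by applying the structural Lemmas~\ref{lemma5}--\ref{lemma7} to $C_{u_1}$ together with the vertex $u_1\in W_3(C_{u_1})$ (whose monochromatic triangle is $u_1wu_2$ of colour $c_w$), forcing $u_k\in P^1_{C_{u_1}}(u_1)$ for $k\ge 5$. The $c(u_1u_{k-1})$ alternative is eliminated by an upward induction on $k$: having shown $c(u_1u_{k-1})=c(u_{k-1}u_k)$ at the previous step, the second option contradicts properness of $C$ and the first would combine with $c(u_1u_2)=c_w$ to produce joint monochromatic triangles; the base case $k=5$ is settled by the explicit cycle $wu_3u_2u_1u_4\overrightarrow{C}u_\ell w$, whose non-propriety together with the no-joint-monochromatic-triangles hypothesis yields $c(u_1u_4)=c(u_4u_5)$.

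\textbf{Main obstacle.} The delicate regimes are $\alpha=c(u_\ell u_1)$ and $\alpha=c_w$. When $\alpha=c(u_\ell u_1)$, both $\Gamma^{(2)}_k$ and $\Gamma^{(3)}_k$ close up improperly at the corner $u_\ell w|wu_{3,4}$ and yield no information; I replace them with cycles beginning at $wu_1$ or $wu_2$, for instance $wu_2u_3\overrightarrow{C}u_{k-1}u_1u_k\overrightarrow{C}u_\ell w$, which recover the missing constraints. When $\alpha=c_w$ the cycle $C_{u_2}$ is no longer proper, breaking the symmetric triple for $c(u_1u_k)$; in this sub-case I invoke the absence of joint monochromatic triangles: $wu_1u_2$ is already monochromatic of colour $c_w$, so if $c(u_1u_3)=c_w$ then $wu_1u_3$ would also be monochromatic of colour $c_w=\alpha$, giving joint monochromatic triangles along the shared edge $wu_1$ -- a contradiction. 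This forces $c(u_1u_3)\neq c_w$ and unlocks a substitute $(\ell+1)$-cycle through the chord $u_1u_3$ that plays the role of $\Gamma^{(3)}_k$. Verifying the boundary cases $k=5$ (where $u_{k-1}=u_4\in P_C^3(w)$) and $k=\ell$ (where $c(u_1u_\ell)=c(u_\ell u_1)$ is tautological) is then routine.
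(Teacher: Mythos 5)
Your setup is right, and your treatment of $c(u_2u_k)$ in the generic regime (intersecting the constraints from $\Gamma^{(1)}_k$ and $\Gamma^{(2)}_k$, using $c(u_2u_3)\neq c_w$) matches the paper's argument. But the proof does not close, and the two places where it fails are exactly where the real work of this lemma lives. First, the ``symmetric triple'' for $c(u_1u_k)$ is not available: swapping $u_1\leftrightarrow u_2$ and reversing the orientation sends the configuration $|P_C^1(w)|=\ell-3$, $|P_C^2(w)|=1$ to one with $|P^1|=1$, $|P^2|=\ell-3$, and, more fatally, the mirrored cycles enter $u_k$ along the chord $u_1u_k$ and leave along $u_ku_{k-1}$, so the mirrored intersection argument would prove $c(u_1u_k)=c(u_ku_{k-1})$ --- which contradicts the asserted $c(u_1u_k)=c(u_ku_{k+1})$ for a proper $C$. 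The asymmetric conclusion (both $u_1$ and $u_2$ see the \emph{forward} edge colour at $u_k$, i.e.\ they imitate $w$) cannot come from a symmetry argument. Your fallback, $u_k\in P^1_{C_{u_1}}(u_1)$ for all $k\ge 5$, is the right idea and is what the paper does, but it needs the anchor $c(u_1u_4)=c(u_4u_5)$ to locate $v_{x(u_1)}$, which brings us to the second gap.

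Second, the anchor itself. Non-properness of $wu_3u_2u_1u_4\overrightarrow{C}u_\ell w$ only yields $c(wu_3)=c(wu_\ell)$ or $c(u_1u_4)\in\{c_w,c(u_4u_5)\}$, and the option $c(u_1u_4)=c_w$ is excluded by the no-joint-monochromatic-triangle hypothesis only when $\alpha:=c(w,P_C^3(w))=c_w$ (then $wu_1u_4$ is monochromatic jointly with $wu_1u_2$ along $wu_1$). When $\alpha\notin\{c_w,c(wu_\ell)\}$ none of the triangles on the edge $u_1u_4$ is forced to be monochromatic, so your base case is not settled; the paper handles this regime by a different route, anchoring instead at $c(u_2u_4)=c(u_4u_5)$ via $wu_4\overrightarrow{C}u_1u_3u_2w$ and $wu_3u_1u_2u_4\overrightarrow{C}u_\ell w$ and then propagating along the proper cycle $C_{u_2}$ (and, when $\alpha=c(wu_\ell)$, via the pair $wu_3u_\ell\overrightarrow{C}u_2u_4\overrightarrow{C}u_{\ell-1}w$ and $wu_3u_2u_4\overrightarrow{C}u_1w$). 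Relatedly, your proposed repair for the $\alpha=c(wu_\ell)$ regime, $wu_2u_3\overrightarrow{C}u_{k-1}u_1u_k\overrightarrow{C}u_\ell w$, constrains $c(u_1u_k)$ and $c(u_1u_{k-1})$ but contains no $u_2u_k$ chord, so it does not recover the lost constraint on $c(u_2u_k)$. In short: the skeleton and the case split on $\alpha$ versus $\{c_w,c(wu_\ell)\}$ are correct, but the derivation of the $c(u_1u_k)$ half and of the $u_4$-anchor outside the case $\alpha=c_w$ is missing.
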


\begin{proof}
In the first case suppose $c(w,P_C^3(w))= c_w$. Then,
to avoid $wu_{3}\overleftarrow{C}u_1u_{4}\overrightarrow{C}u_{\ell}w$
being a requested cycle,
we have $c(u_1u_{4})=c(u_{4}u_{5})$.
Thus, $c(u_{1}u_k)=c(u_ku_{k+1})$ for $u_k\in V(u_4\overrightarrow{C}u_1)$.
Apparently, $c(u_{2}u_k)\in\{c_w,c(u_ku_{k+1})\}$
for $u_k\in V(u_5\overrightarrow{C}u_1)$;
otherwise, $wu_1u_{k-1}\overleftarrow{C}u_2u_k
\overrightarrow{C}u_\ell w$ is a requested cycle.
If $c(u_{2}u_k)=c(u_2u_3)\neq c(u_ku_{k+1})$,
then $c(u_{2}u_k)\neq c(u_1u_{2})$.
Thus, $wu_3\overrightarrow{C}u_{k-1}u_1u_{2}u_{k}\overleftarrow{C}u_{\ell}w$
is a requested cycle, a contradiction.
Hence, $c(u_{2}u_k)=c(u_ku_{k+1})$
for $u_k\in V(u_5\overrightarrow{C}u_1)$.

In the second case suppose $c(w,P_C^3(w))\neq c_w$. Then,
to avoid $wu_4\overrightarrow{C}u_1u_3u_2w$
being a requested cycle,
we have $c(u_1u_{3})\in\{c(u_1u_{\ell}),c(u_{2}u_{3})\}$.
Hence, $c(u_1u_{3})\neq c_w$.
If $c(w,P_C^3(w))\neq c(wu_\ell)$,
then $c(u_1u_{3})\neq c(wu_{3})$.
Thus, $c(u_{2}u_{4})=c(u_{4}u_{5})$;
otherwise, $wu_{3}u_1u_{2}u_{4}\overrightarrow{C}u_{\ell}w$
is a requested cycle.
Therefore, $c(u_{2}u_k)=c(u_ku_{k+1})$ for $u_k\in V(u_5\overrightarrow{C}u_\ell)$.
If $c(w,P_C^3(w))= c(wu_\ell)$,
then to avoid $wu_3u_\ell\overrightarrow{C}u_2u_4\overrightarrow{C}u_{\ell-1}w$
and $wu_3u_2u_4\overrightarrow{C}u_{1}w$
being requested cycles,
we have $c(u_2u_4)\in \{c_w,c(u_4u_5)\}\cap \{c(u_2u_3),c(u_4u_5)\}$.
Thus, $c(u_2u_4)=c(u_4u_5)$.
Since $C_{u_2}$ is proper,
we have $c(u_2u_k)=c(u_ku_{k+1})$ for $u_k\in V(u_4\overrightarrow{C}u_\ell)$.
Furthermore, if $c(u_{1}u_{3})=c(u_{3}u_{4})$,
then since $C_{u_1}$ is proper,
we can get $c(u_{1}u_k)=c(u_ku_{k+1})$ for $u_k\in V(u_5\overrightarrow{C}u_\ell)$;
if $c(u_{1}u_{3})\neq c(u_{3}u_{4})$,
then $C'=u_1u_3\overrightarrow{C}u_\ell u_2u_1$ is proper such that both $|P_{C'}^1(w)|$ and $|P_{C'}^2(w)|$ are larger than 1
or $|P_{C'}^3(w)|=2$.
Thus, according to Lemma \ref{lemma10} or the proof above,
we can get $c(u_{1}u_k)=c(u_ku_{k+1})$ for $v_k\in V(u_5\overrightarrow{C}u_\ell)$.
\end{proof}

\begin{lemma}\label{lemma13}
Suppose there is no proper cycle of length $\ell+1$ containing $v_1$ in $(G,c)$. Then for $w\in W_3(C)$ with $|P_C^2(w)|=1$ and $|P_C^3(w)|\leq 1$, we have $c(u_1u_k)=c(wu_k)$ and $c(wu_k)\in \{c(u_2u_k),c(u_3u_k)\} $ for $u_k\in V(u_5\overrightarrow{C}u_\ell)$.
\end{lemma}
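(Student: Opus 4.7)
The plan is to prove both assertions by pivoting $u_1$, $u_2$, or $u_3$ out of $C$ and inserting $w$, yielding length-$(\ell+1)$ cycles through $v_1$ whose failure to be proper forces the claimed color identities. Throughout, $k$ ranges over $\{5,\ldots,\ell\}$, so $u_{k-1},u_k\in P_C^1(w)$, giving $c(wu_k)=c(u_ku_{k+1})$, $c(wu_{k-1})=c(u_{k-1}u_k)$, and $c(wu_\ell)=c(u_\ell u_1)\neq c_w$ (the last inequality because otherwise a second monochromatic triangle of color $c_w$ would meet $wu_1u_2$ at $w$).

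For the first equality $c(u_1u_k)=c(wu_k)$, I would inspect the cycle
\[
C^{(1)}_k \;=\; w\,u_{k-1}\overleftarrow{C}u_2\,u_1\,u_k\overrightarrow{C}u_\ell\,w
\]
of length $\ell+1$ through $v_1$. Every transition along the two $C$-arcs and at $w$ is automatically proper by the observations above. The only three places where properness can fail are the chord transitions $(u_1u_2,u_1u_k)$, $(u_1u_k,u_ku_{k+1})$, and $(u_\ell w,wu_{k-1})$, leaving the trichotomy
\[
c(u_1u_k)=c_w,\qquad c(u_1u_k)=c(wu_k),\qquad c(u_{k-1}u_k)=c(u_\ell u_1).
\]
The middle alternative is the desired identity. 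The first alternative produces a chord $u_1u_k$ of color $c_w$; pairing this with the companion cycle $w\,u_\ell\overleftarrow{C}u_{k+1}\,u_1\,u_k\overleftarrow{C}u_3\,u_2\,w$ of length $\ell+1$ and repeating the failure analysis, I expect to force a second monochromatic triangle sharing an edge with $wu_1u_2$, contradicting the no-joint-mono-triangles hypothesis. The third alternative, a fixed color coincidence on $C$, is excluded by a symmetric cycle that pivots on the ``$u_\ell$-side'' and exploits $c(wu_\ell)=c(u_\ell u_1)$ together with Lemma~\ref{lemma1}.

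For $c(wu_k)\in\{c(u_2u_k),c(u_3u_k)\}$, I would replay the argument with $u_2$ and $u_3$ playing the role of $u_1$. The cycle
\[
C^{(2)}_k \;=\; w\,u_{k-1}\overleftarrow{C}u_3\,u_2\,u_k\overrightarrow{C}u_\ell\,u_1\,w
\]
is of length $\ell+1$ through $v_1$, and its failure analysis (the only unsafe chord transitions being $(u_2u_3,u_2u_k)$ and $(u_2u_k,u_ku_{k+1})$) forces $c(u_2u_k)\in\{c(u_2u_3),c(wu_k)\}$. A symmetric construction pivoting on $u_3$ forces an analogous dichotomy for $c(u_3u_k)$ (with the ``extra'' option $c(u_3u_4)$ in Case 1 and the option $c_w$ in Case 2, according to whether $u_3\in P_C^1(w)$ or $u_3\in P_C^3(w)$). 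Combining these dichotomies with the already-established identity $c(u_1u_k)=c(wu_k)$ and, if necessary, a third auxiliary $(\ell+1)$-cycle (for instance the one obtained by pivoting simultaneously through $u_2$ and $u_3$), discards the residual scenario in which neither $c(u_2u_k)$ nor $c(u_3u_k)$ equals $c(wu_k)$, producing either a proper $(\ell+1)$-cycle through $v_1$ or a joint monochromatic triangle.

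The main obstacle will be the disciplined elimination of the extraneous alternatives in each trichotomy. Each elimination requires a second (or occasionally third) cycle of length $\ell+1$ and a simultaneous appeal to both the no-joint-mono-triangles hypothesis and the uniqueness of $u_1=v_{y(w)}$ from Lemma~\ref{lemma1}. A secondary bookkeeping nuisance is the split $|P_C^3(w)|=0$ versus $|P_C^3(w)|=1$: when $u_3\in P_C^3(w)$, certain auxiliary cycles need the extra information $c(wu_3)\notin\{c(u_2u_3),c(u_3u_4)\}$, whereas in the other case $c(wu_3)=c(u_3u_4)$ changes which edge-transitions are automatically safe.
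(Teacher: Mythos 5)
Your setup is sound as far as it goes: the cycle $C^{(1)}_k=wu_{k-1}\overleftarrow{C}u_2u_1u_k\overrightarrow{C}u_\ell w$ is indeed a length-$(\ell+1)$ cycle through $v_1$, and your trichotomy $c(u_1u_k)=c_w$, $c(u_1u_k)=c(wu_k)$, or $c(u_{k-1}u_k)=c(u_\ell u_1)$ correctly enumerates where it can fail to be proper. But the proof stops exactly where the lemma's content begins. The two spurious alternatives are dismissed with ``I expect to force'' and ``is excluded by a symmetric cycle,'' and at least one of them genuinely cannot be killed by local cycle surgery. The third alternative says $c(wu_{k-1})=c(wu_\ell)$, i.e.\ $w$ repeats a color on $P_C^1(w)$; this happens precisely when $u_{k-1}$ or $u_\ell$ lies outside $R_C(w)$, and Lemma \ref{lemma6} gives you nothing there. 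The paper has to open its proof with a separate assertion, $|P_C^1(w)\cup P_C^3(w)\setminus R_C(w)|\leq 2$, whose proof invokes the color-degree hypothesis $\delta^c(G)\geq\frac{n+1}{2}$, Corollary \ref{coro1} and Proposition \ref{p1} (a counting argument on $|W_2(C)|+|W_3(C)|$), not just another $(\ell+1)$-cycle. Your proposal never touches $R_C(w)$ versus $P_C^1(w)$ and never uses the degree condition, so this case is simply open. A ``symmetric cycle pivoting on the $u_\ell$-side'' meets the same color coincidence it is supposed to exclude.

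There are also local slips. In $C^{(2)}_k=wu_{k-1}\overleftarrow{C}u_3u_2u_k\overrightarrow{C}u_\ell u_1w$ you list only two unsafe transitions, but the transition at $w$ between $u_1w$ and $wu_{k-1}$ requires $c(u_{k-1}u_k)\neq c_w$, which is not automatic (it does not by itself create a joint monochromatic triangle), so your claimed dichotomy for $c(u_2u_k)$ is really a trichotomy. And the final step --- combining the dichotomies for $c(u_2u_k)$ and $c(u_3u_k)$ with a ``third auxiliary cycle'' to rule out the residual case --- is not exhibited. For comparison, the paper proves the lemma by a reduction rather than a direct chord analysis: after the assertion above, it passes to the proper cycle $C_{u_1}$ (replace $u_1$ by $w$), observes that $u_1\in W_3(C_{u_1})$, and splits on $|P^3_{C_{u_1}}(u_1)|$ so that Lemmas \ref{lemma11} and \ref{lemma12} (and Proposition \ref{p4} in the rainbow subcase) can be applied to the transformed configuration. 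If you want to salvage your direct approach, you would at minimum need to prove the assertion on $R_C(w)$ first and then redo the eliminations case by case; as written, the argument is a plan with the hard steps missing.
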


\begin{proof}
First, we prove the assertion that $|P_C^1(w)\cup P_C^3(w)\setminus R_C(w)|\leq 2$.
Suppose, the contrary holds and let $c(wu_{a-1})=c(wu_p)$, $u_p\in R_C(w)$.
If $c(wu_{a-3})= c(wu_\ell)$,
then $c(wu_{a-1})= c(wu_\ell)$ and $|R_C(w)|=3$.
Apparently, $d^c(w)<\frac{n+1}{2}$ if $|W_2(C)|+|W_3(C)|\leq 3$.
Thus, $|W_2(C)|+|W_3(C)|=4$. From Corollary \ref{coro1} and Proposition \ref{p1}
we know $|W_2(C)|=1$ and $|W_3(C)|=3$.
Then, $c(wu_3)=c(wu_{5})$ while $c(wu_4)\neq c(wu_2)$
or $c(wu_2)=c(wu_{4})$ while $c(wu_3)\neq c(wu_1)$, a contradiction.
If $c(wu_{a-3})\neq c(wu_\ell)$,
then $(u_1,u_{a-2})\in DP_w$.
Furthermore, if $u_p\neq u_1$,
then $c(u_1u_{a-2})\in \{c(u_1u_\ell),c(u_{a-2}u_{a-3})\}$;
otherwise,  $wu_2\overrightarrow{C}u_{a-2}u_1\overleftarrow{C}u_pu_{a-1}
\overrightarrow{C}u_{p-1}w$ is a requested cycle.
Since $c(wu_{a-2})\neq c(wu_\ell)$, we have
$c(u_1u_{a-2})=c(u_{a-2}u_{a-3})=c_w$.
Thus, $|R_C(w)|=2$, a contradiction.
If $u_p=u_1$,
then since $|R_C(w)|\geq 3$,
we have $(u_1,u_{a-2})\in DP_w$.
Since $wu_2\overrightarrow{C}u_{a-2}u_1u_{a-1}
\overrightarrow{C}u_\ell w$ is of length $\ell+1$ and contains $v_1$,
we have $c(u_1u_{a-2})\in  \{c(u_{a-1}u_1),c(u_{a-2}u_{a-3})\}$.
Then, $c(u_1u_{a-2})=c(u_{a-2}u_{a-1})=c(u_{a-1}u_\ell)$.
Thus, there exist joint monochromatic triangles in $(G,c)$, a contradiction. Hence, the assertion follows.

Since $C_{u_1}$ is proper,
according to Lemma \ref{lemma12},
if $|P_{C_{u_1}}^3(u_1)|=2$,
the result follows.
In the following suppose $|P_{C_{u_1}}^3(u_1)|\neq 2$.

In the first case suppose $|P_{C_{u_1}}^3(u_1)|\geq 3$.
Then, $|P_{C_{u_1}}^2(u_1)|=1$;
otherwise, $c(wu_3)=c(u_2u_3)$, a contradiction.
If $c(u_1,P_{C_{u_1}}^1(u_1))=c_w$,
then according to Lemma \ref{lemma11},
the result follows.
If $c(u_1,P_{C_{u_1}}^1(u_1))\neq c_w$,
then $c(u_1u_i)\notin \{c_w,c(u_iu_{i+1})\}$ for $i=4,5$.
Hence, $c(wu_4)=c(wu_3)=c(wu_\ell)$,
which means $|P_C^3(w)|=2$, a contradiction.

In the second case suppose $|P_{C}^3(w)|=1$
and $|P_{C_{u_1}}^3(u_1)|\leq 1$.
When $c(wu_3)\neq c_w$, $C_{u_2}$ is proper.
Since $wu_3u_2u_4\overrightarrow{C}u_1w$ is a cycle of length $\ell+1$ and contains $v_1$,
we have $c(u_2u_4)\in \{c(u_2u_3),c(u_4u_5)\}$.
If $c(u_2u_4)=c(u_4u_5)$, then the result holds.
If $c(u_2u_4)=c(u_2u_3)\neq c(u_4u_5)$,
then $u_2\in W_3(C_{u_2})$ with $|P_{C_{u_2}}^3(u_2)|\geq 2$.
Thus, according to Lemmas \ref{lemma11} and \ref{lemma12}, the result follows.
When $c(wu_3)= c_w$,
from our assertion we know $R_C(w)=P_C^1(w)=V(u_4\overrightarrow{C}u_1)$.
Since for $u_k\in V(u_5\overrightarrow{C}u_\ell)$,
both $wu_3\overrightarrow{C}u_{k-1}u_1u_2u_k\overrightarrow{C}u_\ell w$
and $wu_{k-1}\overleftarrow{C}u_2u_k\overrightarrow{C}u_1$
are of length $\ell+1$ and contain $v_1$,
we have $c(u_2u_k)=c(u_ku_{k+1})$.
The result thus follows.

In the third case suppose $P_{C}^3(w)=\emptyset$
and $|P_{C_{u_1}}^3(u_1)|\leq 1$.
Apparently, if $|P_{C_{u_1}}^3(u_1)|= 1$, the result holds.
In the following suppose $P_{C_{u_1}}^3(u_1)=\emptyset$.
If $u_3\overrightarrow{C}u_2$ is rainbow,
from Proposition \ref{p4},
there is a requested cycle in $(G,c)$.
If $u_4\notin R_C(w)$,
then assuming $c(wu_4)=c(uu_p)$, we have $u_p\in R_C(w)$.
Since $c(u_1u_k)=c(wu_k)$, we have $c(wu_k)\neq c_w$ for $u_k\in P_C^1(w)$.
Since $wu_{2}u_{3}u_1\overleftarrow{C}u_{p}u_{a-1}\overrightarrow{C}u_{p-1}w$
is of length $\ell+1$ and contains $v_1$,
we have $c(u_1u_3)=c(u_1u_\ell)$,
that is, $c(wu_3)=c(wu_\ell)$.
Then, $u_4\overrightarrow{C}u_1$ is rainbow.
According to Proposition \ref{p4},
there is a requested cycle in $(G,c)$.
\end{proof}

\begin{pro}\label{p5}
Let $(G,c)$ be an edge-colored complete graph on $n\geq 3$ vertices such that $\delta^c(G)\geq \frac{n+1}{2}$, and not contain joint monochromatic triangles. For any $w\in W_3(C)$, if $Q_C(w)=P_C^2(w)$, then $(G,c)$ is properly vertex-pancyclic.
\end{pro}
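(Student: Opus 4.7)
The plan is to argue by contradiction. Suppose some vertex $v_1$ lies on a proper $\ell$-cycle $C=v_1v_2\cdots v_\ell v_1$ with $4\leq\ell\leq n-1$ but on no proper $(\ell+1)$-cycle. Propositions \ref{p1}, \ref{p2}, \ref{p3}, and \ref{p4} together reduce the situation to the case where $W_1(C)=\emptyset$, $|W_2(C)|\leq 1$, no $w\in W_3(C)$ satisfies $|P_C^1(w)|=|P_C^2(w)|=1$, and for every $w\in W_3(C)$ either $P_C^1(w)\setminus R_C(w)$ or $P_C^2(w)\setminus Q_C(w)$ is empty. The standing hypothesis $Q_C(w)=P_C^2(w)$ forces the second alternative throughout. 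Corollary \ref{coro1} moreover gives $|W_3(C)|\leq 3$.

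Fix $w\in W_3(C)$. I split on whether $|P_C^2(w)|\geq 2$ or $|P_C^2(w)|=1$. In the first subcase, if $|P_C^3(w)|\geq 3$ I use Lemma \ref{lemma9} to replace $C$ by a proper $\ell$-cycle $C_{u_i}$ with $u_i\in W_3(C_{u_i})$ and $|P_{C_{u_i}}^3(u_i)|\leq 1$, so after re-verifying the reductions above on $C_{u_i}$ I may assume $|P_C^3(w)|\leq 2$; Lemma \ref{lemma10} then supplies a set $V_1\subseteq V(C)$ of size $\ell-4$ with $c(wu)=c(uu_1)=c(uu_2)$ for each $u\in V_1$. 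In the second subcase, if $|P_C^3(w)|\geq 3$, another application of Lemma \ref{lemma9}, combined with Lemma \ref{lemma11}, reduces to $|P_C^3(w)|\leq 2$; then Lemmas \ref{lemma12} and \ref{lemma13} force $c(u_1u_k)=c(u_2u_k)=c(u_ku_{k+1})$ or $c(u_1u_k)=c(wu_k)$ with $c(wu_k)\in\{c(u_2u_k),c(u_3u_k)\}$ respectively, for all $u_k\in V(u_5\overrightarrow{C}u_\ell)$. In every subcase, therefore, the colors of the edges from $u_1$ (and from $u_2$) to a subset of $V(C)$ of size $\ell-O(1)$ are essentially determined. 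Endowing the appropriate induced subgraph with the coloring orientation relative to $w$ and using $d^c(v)\leq d^-_D(v)+1$ together with Lemmas \ref{lemma6} and \ref{lemma7}, I bound $d^c(u_1)$ (or $d^c(u_2)$) above by $\frac{\ell}{2}+O(1)+|W_2(C)|+|W_3(C)|$, which is strictly less than $\frac{n+1}{2}$ once $|W_2(C)|+|W_3(C)|\leq 4$ and $n\geq\ell+1$ are inserted. The resulting violation of $\delta^c(G)\geq\frac{n+1}{2}$ is the desired contradiction.

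The main obstacle I anticipate is the careful bookkeeping of the color degree bound at $u_1$ (or $u_2$): I must ensure that the edges from $u_1$ to $V(G)\setminus V(C)$ do not spoil the estimate, which requires tracking the colors $c(u_1w')$ for each $w'\in W_2(C)\cup W_3(C)\setminus\{w\}$ and showing, via the no-joint-monochromatic-triangle hypothesis together with Lemmas \ref{lemma6} and \ref{lemma7}, that these colors mostly repeat colors already used inside $V(C)$. A secondary subtlety is that the transformation $C\mapsto C_{u_i}$ of Lemma \ref{lemma9} alters the partition classes; after each such application I must re-check that no proper $(\ell+1)$-cycle through $v_1$ arises for free from Propositions \ref{p1}--\ref{p4} applied to the new cycle, before reinvoking the case analysis above.
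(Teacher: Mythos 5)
Your setup (contradiction, the reductions via Propositions \ref{p1}--\ref{p4} and Corollary \ref{coro1}, the use of Lemma \ref{lemma9} to force $|P_C^3(w)|\leq 2$, and then Lemmas \ref{lemma10}--\ref{lemma13}) matches the paper, but the final counting step is aimed at the wrong vertex, and this is a genuine gap. The conclusions of Lemmas \ref{lemma10}--\ref{lemma13} are identities of the form $c(u_ku_1)=c(u_ku_2)=c(wu_k)=c(u_ku_{k\pm1})$ for $u_k$ in a set $V_1$ of size $\ell-4$: they say that the edges from $u_k$ to $u_1,u_2,u_3,w$ \emph{reuse colors already present at $u_k$}. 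They impose no repetition among the colors seen \emph{at} $u_1$ or $u_2$; for instance $c(u_1u_k)=c(u_ku_{k+1})$ allows the $\ell-4$ colors $c(u_ku_{k+1})$, $u_k\in V_1$, to be pairwise distinct, so $d^c(u_1)$ can be as large as roughly $\ell$, and no bound of the form $\frac{\ell}{2}+O(1)$ at $u_1$ follows from Lemmas \ref{lemma6} and \ref{lemma7} (which give the dependence property for pairs inside $R_C(w)\cup Q_C(w)$, i.e.\ inside $V_1$, not at $u_1$). The paper instead observes that $V_1$ has the $DP_w$, takes the vertex $u_p\in V_1$ of maximum out-degree in the coloring orientation of $G[V_1]$, and derives $d^c_{V(C)}(u_p)\leq\frac{\ell+1}{2}$ because the three or four edges leaving $V_1$ from $u_p$ contribute at most one new color.

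The second problem is quantitative: since $W_1(C)=\emptyset$, $|W_2(C)|\leq 1$ and $|W_3(C)|\leq 3$, we have $n\leq\ell+4$, so $\frac{n+1}{2}\leq\frac{\ell+5}{2}$ and the available slack over $\frac{\ell+1}{2}$ is at most $2$. An inequality of the shape $\frac{\ell}{2}+O(1)+|W_2(C)|+|W_3(C)|<\frac{n+1}{2}$ therefore cannot close, and the phrase ``these colors mostly repeat colors already used'' is precisely where the real work lies. The paper spends the bulk of the proof on this point: when $|W_3(C)|\in\{2,3\}$ it shows that the maximum out-degree in $D(G[V_1])$ must equal $\frac{\ell-5}{2}$ (hence every vertex of $V_1$ attains it), deduces that $V_1\subseteq P_C^3(w')$ (and $\cap\,P_C^3(w'')$) since otherwise $c(w'u)$ is already a used color at $u$, pins $P_C^3(w')$ down to the arc $u_5\overrightarrow{C}u_\ell$ or $u_3\overrightarrow{C}u_{\ell-2}$, and then reapplies Lemma \ref{lemma9} to the monochromatic triangle $w'u_3u_4$ to manufacture the final contradiction. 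None of this case analysis is present in your outline, so the proposal as written does not constitute a proof.
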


\begin{proof}
Suppose, to the contrary, that there exists a vertex $v$ which is contained in a proper $\ell$-cycle $C$
in $(G,c)$ for some $\ell$ with $4\leq \ell \leq n-1$, but no proper cycle of length $\ell+1$ in $(G,c)$ contains vertex $v$.
According to Lemma \ref{lemma9},
suppose that there is a vertex $w\in W_3(C)$ such that $|P_C^3(w)|\leq 2$.

According to Lemmas \ref{lemma10}, \ref{lemma11}, \ref{lemma12} and \ref{lemma13},
there is a vertex set $V_1$ of size $\ell-4$ such that $|\mathcal{C}(u,V(C)\setminus V_1)\cap \mathcal{C}(u, V_1)|\geq 2$
for $u\in V_1$, and $V_1$ has the $DP_w$.
Then, there is a vertex $u_p\in V_1$ with $d^c_{V(C)}(u_p)\leq \frac{\ell+1}{2}$.
If $W_3(C)=\{w\}$, $d^c_{V(C)}(u_p)\leq \frac{\ell+1}{2}<\frac{n+1}{2}$, a contradiction.
Thus, $|W_3(C)|\geq 2$.
Then, we give $(G[V_1],c)$ a coloring orientation.
Assume $u_p$ is the maximum out-degree in $D(G[V_1])$.

If $W_3(C)=\{w,w'\}$,
then $d^+_D(u_p)=\frac{\ell-5}{2}$; otherwise,  $d^c(u_p)<\ell-5-\frac{\ell-5}{2}+4=\frac{\ell+3}{2}\leq \frac{n+1}{2}$, a contradiction.
Since the average out-degree of $D$ is $\frac{\ell-5}{2}$,  we have
$d^+_D(u)=\frac{\ell-5}{2}$ for $u\in V_1$.
Therefore, $V_1\subseteq P_C^3(w')$; otherwise, the color of $w'u$ is a used color in $\mathcal{C}(u,V(C))$.
Then, there is a vertex whose color degree less that $\frac{n+1}{2}$.
Thus, $P_C^3(w')=u_5\overrightarrow{C}u_\ell$ or $P_C^3(w')=u_3\overrightarrow{C}u_{\ell-2}$;
otherwise, according to Lemma \ref{lemma1},
$(G,c)$ has joint monochromatic triangles.
Then, we might as well suppose $P_C^3(w')=u_5\overrightarrow{C}u_\ell$.
Then, $w'u_3u_4$ is monochromatic.
According to Lemma \ref{lemma9}, we have that
$u_3\in W_3(C_{u_3})$ with $|P_{C_{u_3}}^3(u_3)|\leq 1$
or $u_4\in W_3(C_{u_4})$ with $|P_{C_{u_4}}^3(u_4)|\leq 1$.
(Note that $C_{u_i}=u_{i-1}w'u_{i+1}\overrightarrow{C}u_{i-1}$, $i=3,4$).
Thus, there is a vertex $u_i\in V_1$
such that $\{c(u_3u_i),c(u_4u_i)\}\cap \{c(u_iu_{i+1}),c(u_iu_{i-1})\}\neq \emptyset$.
Then, $d^c(u_i)<\ell-5-\frac{\ell-5}{2}+3=\frac{\ell+1}{2}\leq \frac{n+1}{2}$, a contradiction.

If $W_3(C)=\{w,w',w''\}$,
then we can suppose that $wv_1v_i$ is not monochromatic, $i=2,\ell$. Then $u_p\in P_C^3(w')\cap P_C^3(w'')$;
otherwise, $d^c(u_p)\leq \ell-5-\frac{\ell-5}{2}+1+3=\frac{\ell+3}{2}<\frac{n+1}{2}$, a contradiction.
Thus, $\frac{\ell-5}{2}\leq d^+_D(u_p)\leq \frac{\ell-4}{2}$;
otherwise, $d^c(u_p)\leq \ell-5-\frac{\ell-3}{2}+5=\frac{\ell+3}{2}<\frac{n+1}{2}$, a contradiction.
When $d^c(u_p)=\frac{\ell-5}{2}$,
since the average out-degree of $D$ is $\frac{\ell-5}{2}$,
we have $d^+_D(u)=\frac{\ell-5}{2}$ for $u\in V_1$.
Therefore, $V_1\subseteq P_C^3(w')\cap P_C^3(w'')$.
Thus, $P_C^3(w')\cap P_C^3(w'')=u_5\overrightarrow{C}u_\ell$ or $P_C^3(w')\cap P_C^3(w'')=u_3\overrightarrow{C}u_{\ell-2}$.
Then, there is a vertex $u\in V_1$ such that $d^c(u)<\frac{n+1}{2}$, a contradiction.
When $d^c(u_p)=\frac{\ell-4}{2}$,
there is a distinct vertex $u'$ in $V_1$ such that
$d^+(u')\geq \frac{\ell-5}{2}$.
Thus, $\{u_p,u'\}\subseteq P_C^3(w')\cap P_C^3(w'')$.
According to the claim of Corollary \ref{coro1},
we have $|P_C^3(w')\cap P_C^3(w'')|\geq 3$.
According to Lemma \ref{lemma9},
we might as well have $|P_C^3(w')|\leq 1$ and $w''v_1v_i$ is monochromatic, $i=2,\ell$.
Then, $c(w'u_p)\subseteq \{c(u_pu_{p+1}),c(u_pu_{p-1})\}$.
Thus, $d^c(u_p)\leq \ell-5-\frac{\ell-4}{2}+1+3=\frac{\ell+2}{2}<\frac{n+1}{2}$, a contradiction.
\end{proof}

Eventually, combining Propositions \ref{p1} through \ref{p5}, we get the proof of our main result Theorem \ref{main}.

\end{document}